\definecolor{zzttqq}{rgb}{0.6,0.2,0.}
\newtheorem{theorem}{Theorem}[section]
\newtheorem{lemma}[theorem]{Lemma}
\newtheorem{corollary}[theorem]{Corollary}
\newtheorem{proposition}[theorem]{Proposition}
\theoremstyle{remark}
\newtheorem{rem}[theorem]{Remark}
\renewenvironment{proof}[1][Proof]{\noindent{\itshape {#1.} } }{$\Box$ 
\medskip} 
\numberwithin{equation}{section}
\newcommand{\R}{\mathbb{R}}
\newcommand{\Z}{\mathbb{Z}}
\newcommand{\Pb}{\mathbb{P}}
\newcommand{\E}{\mathbb{E}}
\newcommand{\F}{\mathcal{F}}
\newcommand{\N}{\mathbb{N}}
\newcommand{\eps}{\varepsilon}
\newcommand{\ueps}{u_\eps}
\newcommand{\uepsp}{u_\eps^+}
\newcommand{\uepsm}{u_\eps^-}
\newcommand{\uepspm}{u_\eps^\pm}
\newcommand{\uepsext}{u^{\rm{ext}}_\eps}
\newcommand{\Depsp}{D_\eps^+}
\newcommand{\Depsm}{D_\eps^-}
\newcommand{\Depspm}{D_\eps^\pm}
\newcommand{\Gameps}{\Gamma_\eps}
\newcommand{\Gamd}{\mathbf{\Gamma}_d}
\newcommand{\Rdp}{\mathbb{R}_d^+}
\newcommand{\Rdm}{\mathbb{R}_d^-}
\newcommand{\Rdpm}{\mathbb{R}_d^{\pm}}
\newcommand{\Atilde}{\widetilde{A}}
\newcommand{\Acal}{\mathcal{A}}
\newcommand{\Ieps}{\mathcal{I}_\eps}
\newcommand{\Hcal}{\mathcal{H}}
\newcommand{\Wcal}{\mathcal{W}}
\newcommand{\cA}{\mathcal{A}}
\newcommand{\dist}{\mathrm{dist}}
\title{Homogenization of Randomly Deformed Conductivity Resistant Membranes}
\author{
Wenjia Jing\thanks{Department of Mathematics, University of Chicago, 5734 S. University Avenue, Chicago, IL 60637 (wjing@math.uchicago.edu)}}
\begin{document}
\maketitle

\begin{abstract}

We study the homogenization of a stationary conductivity problem in a random heterogeneous medium with highly oscillating conductivity coefficients and an ensemble of simply closed conductivity resistant membranes. This medium is randomly deformed and then rescaled from a periodic one with periodic membranes, in a manner similar to the random medium proposed by Blanc, Le Bris and Lions \cite{BLBL06}. Across the membranes, the flux is continuous but the potential field itself undergoes a jump of Robin type. We prove that, for almost all realizations of the random deformation, as the small scale of variations of the medium goes to zero, the random conductivity problem is well approximated by that on an effective medium which has deterministic and constant coefficients and contains no membrane. The effective coefficients are explicitly represented. One of our main contributions is to provide a solution to the associated auxiliary problem that is posed on the whole domain with infinitely many interfaces, in a setting that is neither periodic nor stationary ergodic in the usual sense. 

\smallskip
{\bf Mathematics Subject Classification:} 35B27, 35R60, 74Q05

\smallskip
{\bf Keywords:} Stochastic homogenization, perforated domain, heterogeneous medium, unbounded domain, transmission problem
\end{abstract}

\section{Introduction}
\label{sec:intro}

In this article, we investigate the stochastic homogenization problem for a second order elliptic equation of divergence form that is posed on domains separated by an ensemble of simply closed surfaces, with jump type transmission conditions across them. The surfaces that separate the spatial domain have length scale $\eps \ll 1$ and they are realized as a random deformation from a periodic structure of surfaces. Our goal is to study the behavior, as $\eps$ goes to zero, of the solution to this equation.

More precisely, let $D$ be an open bounded subset in $\R^d$, $d = 2,3$, which is divided by a random ensemble of simply closed interfaces $\Gameps$ into $\Depsp$ and $\Depsm$, where $\Depsm$ denotes the union of the interiors enclosed by the interfaces in $\Gameps$, and $\Depsp$ denotes the rest of the domain. The small parameter $0<\eps \ll 1$ is the length scale of interfaces. We study the behavior of $u_\eps = \uepsp(x,\omega) \chi_{\Depsp} + \uepsm(x,\omega)\chi_{\Depsm}$, where $\chi_U$ denotes the indicator function of an open set $U$, and $\ueps$ solves the following problem:
\begin{equation}
\left\{
\begin{aligned}
&-\nabla \cdot\left( A^\eps (x,\omega) \nabla \uepspm(x,\omega)\right) = f(x), &\quad &\text{ for } x \in \Depspm,\\
&\frac{\partial}{\partial \nu_{A^\eps}} \uepsp(x,\omega) = \frac{\partial}{\partial \nu_{A^\eps}} \uepsm(x,\omega), &\quad &\text{ for } x \in \Gameps,\\
&\uepsp(x,\omega) - \uepsm(x,\omega) = \eps \frac{\partial}{\partial \nu_{A^\eps}} \uepsp(x,\omega), &\quad &\text{ for } x \in \Gameps,\\
&\uepsp(x,\omega) = 0, &\quad &\text{ for } x \in \partial D.
\end{aligned}
\right.
\label{eq:aniso}
\end{equation}
The first line in \eqref{eq:aniso} is understood as two equations for, respectively, $\uepsp$ on $\Depsp$ and $\uepsm$ on $\Depsm$. The variable $\omega$ denotes the realization of the random ensemble $\Gameps$, which is obtained by a random deformation of a periodic ensemble followed by rescaling. Notice that $\Gameps, \Depsp, \Depsm$ and hence $u_\eps$ are all random.

The problem above models, among many other natural applications, the stationary conductivity of heat through a medium that contains heat resistant membranes $\Gameps$. The anisotropic diffusion coefficients $A^\eps = (a^\eps_{ij})$ form a $d\times d$ matrix with entries 
\begin{equation*}
a_{ij}^\eps(x,\omega)= \widetilde{a}_{ij}\left(\Phi^{-1}\left(\frac{x}{\eps},\omega\right)\right),
\end{equation*}
where $\Phi(\cdot,\omega)$ is a random diffeomorphism on $\R^d$ and $\Atilde(y) = (\widetilde{a}_{ij}(y))$ is a $[0,1)^d$-periodic uniformly elliptic matrix. For simplicity, we assume that $(\widetilde{a}_{ij})$ is symmetric. The second and third equations in \eqref{eq:aniso} are the transmission conditions across the membranes. There, we define the conormal derivative of $\uepspm$, i.e. the normal flux, at $x \in \Gameps$ as
\begin{equation}
\frac{\partial \uepspm}{\partial \nu_{A^\eps}} : = \nu_x \cdot A^\eps \nabla \uepspm(x,\omega),
\label{eq:conormal}
\end{equation}
where $\nu_x$ is the unit outer normal vector along the boundary $\Gameps$ of $\Depsm$. The transmission conditions depict that the flux is continuous across the interface while the potential field $\ueps$ itself has a jump which is proportional to the flux. This jump condition is obtained asymptotically by considering the membranes as interfacial thermal barriers whose thickness is sent to zero. We refer to Carslaw and Jaeger \cite{CJ47} for the physical justification of these conditions, and the book of Milton \cite{miltonbook} for a comprehensive treatment of composite materials.

Equations with highly oscillating coefficients and/or highly oscillating domains arise naturally in many applications in physics and engineering. Due to the small scale variations, it is difficult to study such equations directly. For instance, straightforward numerical simulations of such equations become a daunting task when the scale is very small. It is hence plausible to seek for simplified equations which approximate the heterogeneous ones when the small scale tends to zero, under certain assumptions on the coefficients and the problem settings, e.g. periodicity or stationary ergodicity. This is the well known homogenization theory, which has a long history that dates back to the 70's; see e.g. Bensoussan, Lions and Papanicolaou \cite{BLP-78} and Tartar \cite{Tar_cour} for the periodic setting, and Papanicolaou and Varadhan \cite{PV79} and Kozlov \cite{Kozlov} for the random setting. We refer to the books of Zhikov, Kozlov and Ole\u{\i}nik \cite{Jikov_book} for a comprehensive treatment of homogenization theory. 

In this paper we study homogenization of \eqref{eq:aniso} where both the elliptic coefficients and the interfaces are random and vary on a scale of $\eps$. More precisely, our random setting is obtained by a random deformation from the corresponding periodic setting, in which the coefficients and the interfaces are periodic. Our idea takes inspiration from the random settings of Blanc, Le Bris and Lions \cite{BLBL06,BLBL07}. Details of the setting are in Section \ref{sec:form}. The resulting medium is $\Z^d$-stationary ergodic, which is different from the usual $\R^d$-stationary ergodic. 

Our main result, Theorem \ref{thm:homog} below, shows that as $\eps \to 0$, for almost all $\omega$, the unique solution $u_\eps(\cdot, \omega)$ of \eqref{eq:aniso} converges to the solution of the following deterministic equation
\begin{equation}
\left\{
\begin{aligned}
-\nabla \cdot  A^0\ \nabla u_0(x) = f(x), \quad &\text{ for } x \in D,\\
u_0(x) = 0, \quad &\text{ for } x \in \partial D.
\end{aligned}
\right.
\label{eq:homog}
\end{equation}
We note the effective medium does not contain any conductivity resistant membrane, $\ueps$ converges strongly in $L^2(D)$ to $u_0$, and the flux $\chi_{\Depsp} A^\eps \nabla \ueps^+ + \chi_{\Depsm} A^\eps\nabla \ueps^-$ converges weakly in $(L^2(D))^d$ to the homogenized flux $A^0\nabla u_0$. The homogenized elliptic coefficients $(A^0)_{ij}$ are deterministic constants given by the formula \eqref{eq:Abardef}.

The homogenization problem of \eqref{eq:aniso} in the periodic case was first studied by Monsurr\`o \cite{Monsur}. Later, the parabolic version was studied by Donato and Monsurr\`o \cite{DM04}, and the wave equation case was studied by Donato, Faella and Monsurr\`o \cite{DFM07}. Since the interfaces divide the physical domain of the equation, homogenization of equations with interfaces is closely related to homogenization in perforated domains. The study of the latter problem goes back at least to Cioranescu and Saint Jean Paulin \cite{CSP}, and a general framework for periodic perforations was developed by Cioranescu and Murat \cite{CM82}. The main tool in \cite{CSP} was the construction of an operator that extends functions to interior of the perforations, which was used also in \cite{Monsur,DM04,DFM07}. Allaire and Murat \cite{AM93} studied homogenization of Neumann problem on perforated domains without using this extension operator. Recently, Allaire and Habibi \cite{AH13,AH132} studied the interface problem using the two-scale convergence method. In the random setting, homogenization in perforated domain was studied by Zhikov \cite{Zhikov90}. To our best knowledge, random homogenization of the interface problem \eqref{eq:aniso} was first studied by the author with Ammari, Garnier, Giovangigli and Seo \cite{AGGJS13}, where randomly deformed structure with constant isotropic conductivity is considered and we constructed a random extension operator following the ideas of \cite{CSP,Monsur}. The current paper generalizes the result of \cite{AGGJS13} and deals with both random interfaces and random coefficients.

\begin{figure}[t]
\caption{\label{fig1}Left: an interface $\Gamma$ divides the unit cell $Y = [0,1]^d$ into $Y^-$ and $Y^+$; Middle: the structure $\Gamd$ over $\R^d$ formed by translating $\Gamma$ periodically; Right: a typical realization of the deformed structure, i.e. $\Phi(\Gamd, \omega)$.}
\begin{center}
\includegraphics[width=0.45\textwidth]{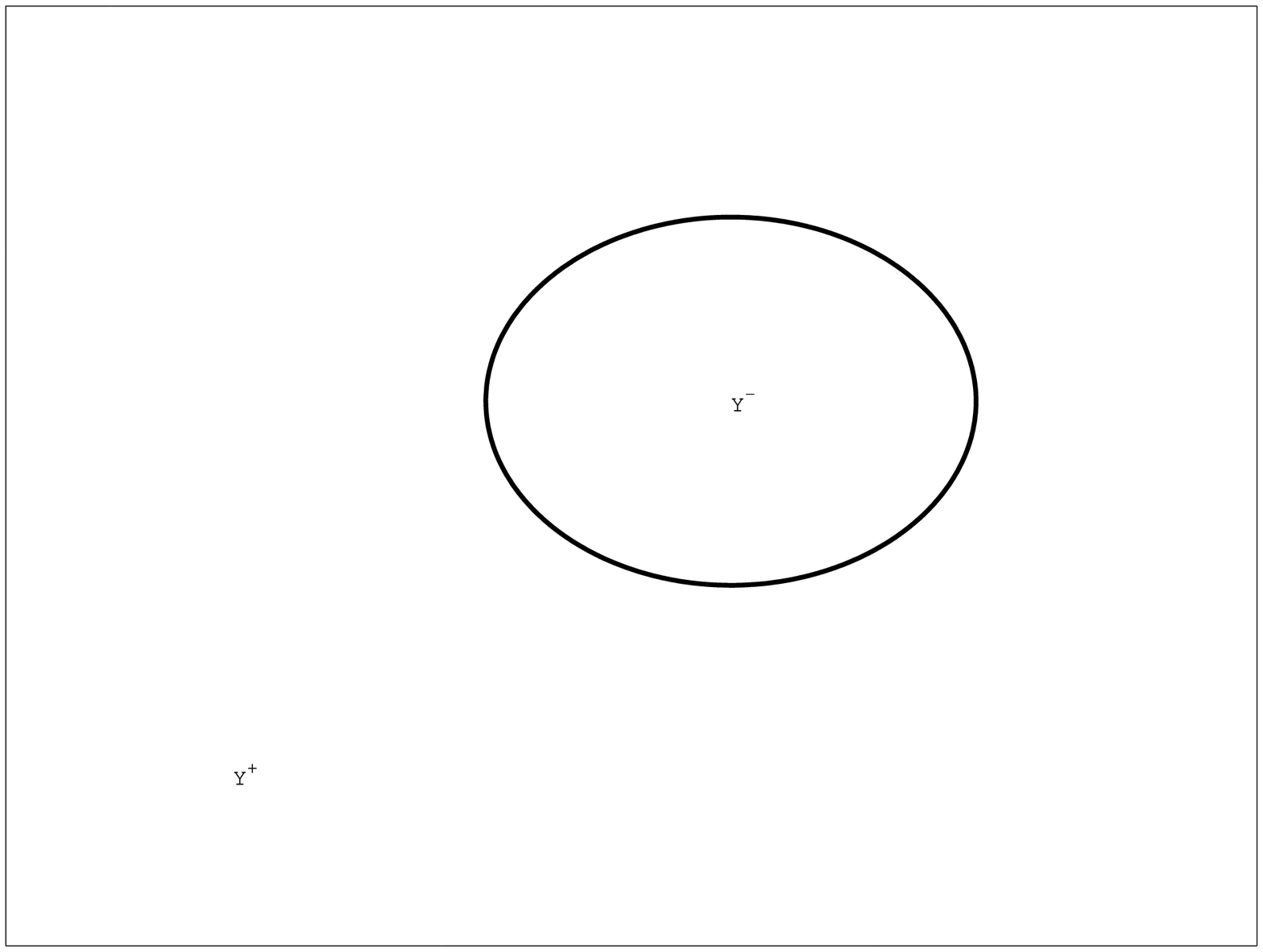}
\includegraphics[width=0.45\textwidth]{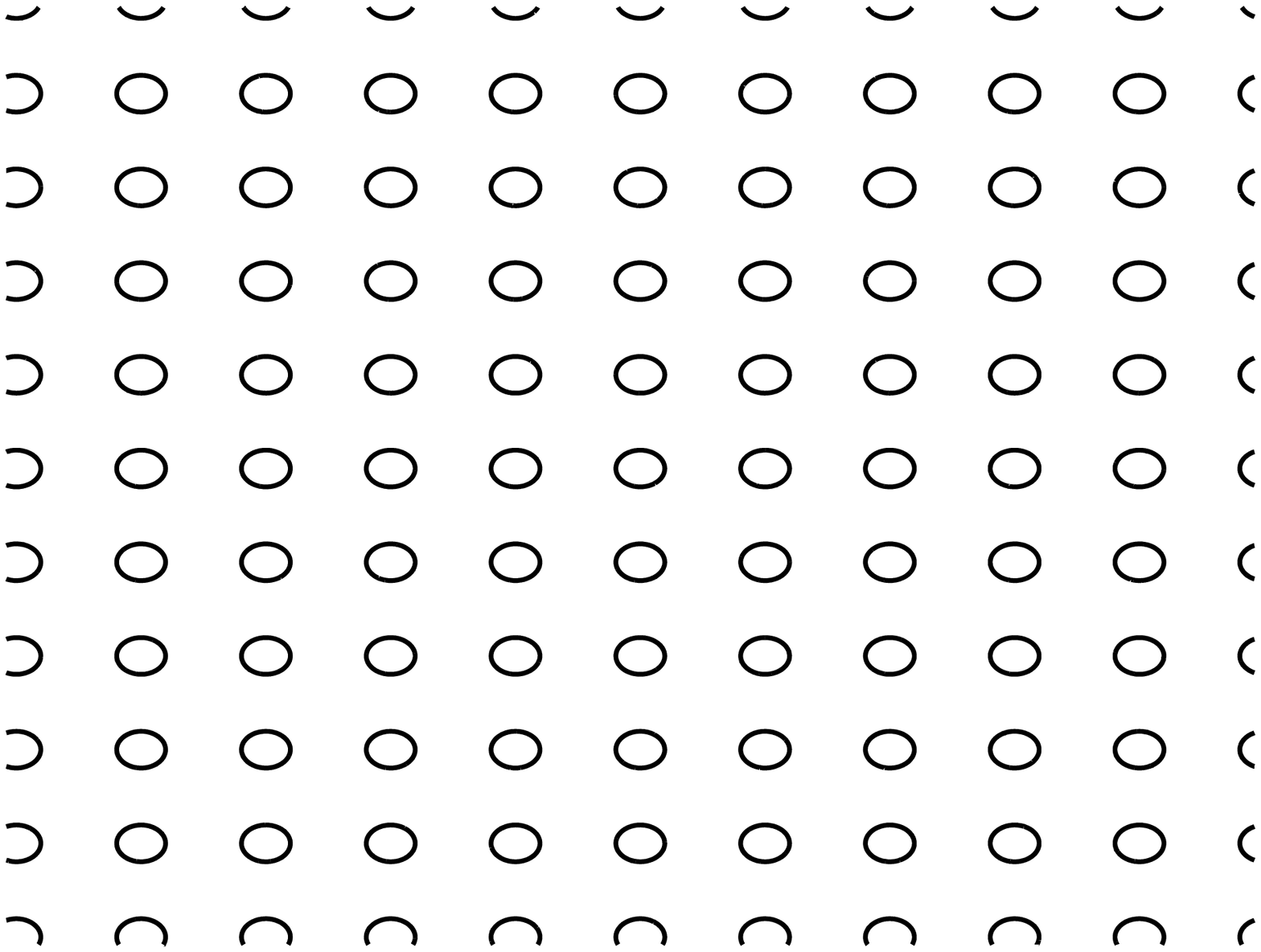}
\\
\includegraphics[width=0.5\textwidth]{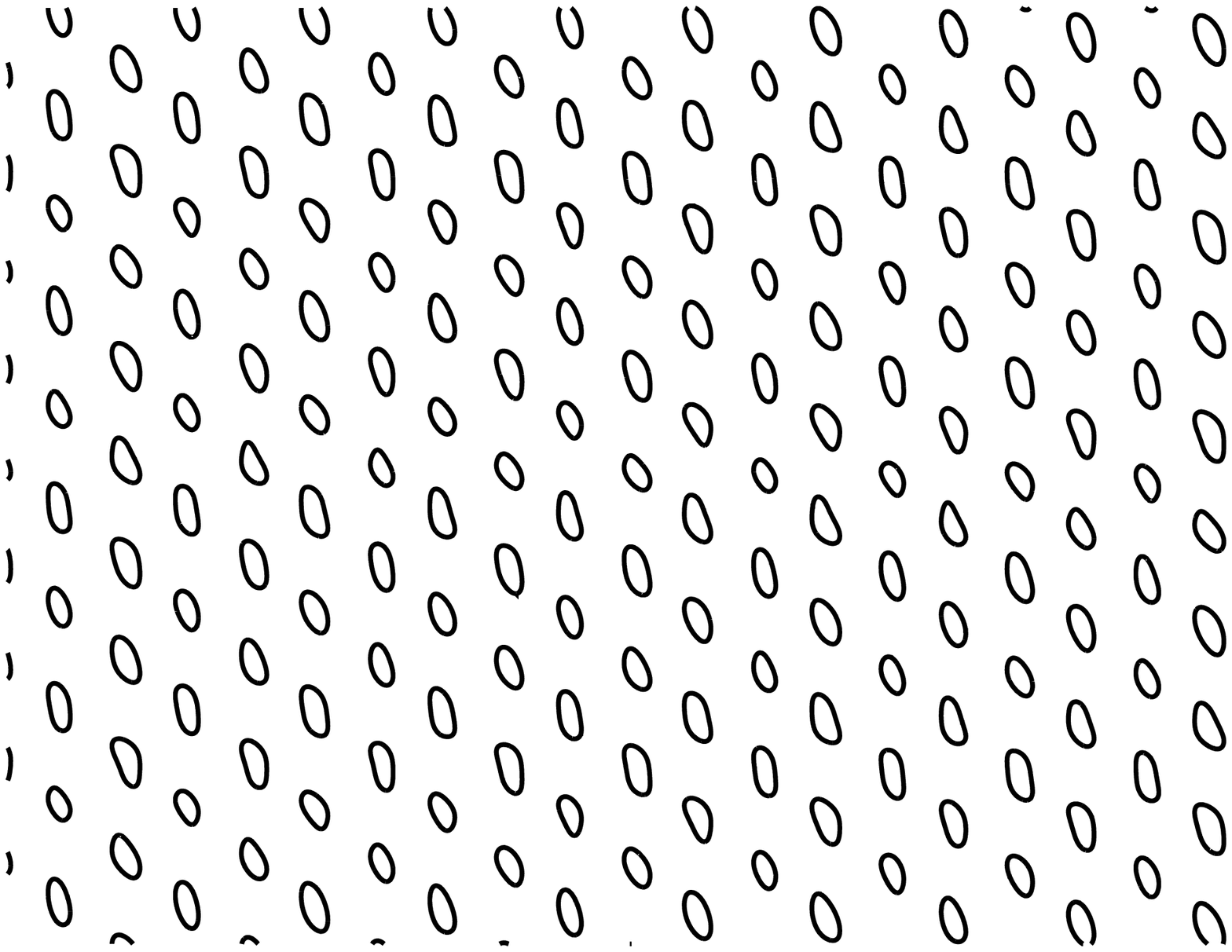}
\end{center}
\end{figure}

We have a linear homogenization problem for second order elliptic equations and it is natural to apply the standard oscillating test function method; see e.g. Murat and Tartar \cite{murat}. The key step is to build oscillating test functions from the auxiliary problem (or ``cell problem"): for any fixed vector $p \in \R^d$, find $w_p = (w_p^+, w_p^-)$ such that
\begin{equation}
-\nabla \cdot A(y,\omega) \left(p + \nabla w^{\pm}_p(y,\omega) \right) = 0, \quad\quad \text{on}\quad \Phi(\R^\pm_d,\omega),
\label{eq:aux}
\end{equation}
with proper transmission conditions across the interfaces of $\Phi(\Rdm)$ and $\Phi(\Rdp)$; see \eqref{eq:auxil} for more precise formulation. Here, $y = x/\eps$ is the microscale variable, $\R^d = \Rdp\cup \Gamd \cup \Rdm$ is a decomposition of $\R^d$ formed by the periodic interfaces $\Gamd$, and $\Phi(\cdot,\omega): \R^d \to \R^d$ is a random deformation on $\R^d$; see Figure \ref{fig1} for the decomposition. If $\Phi$ is the identity mapping for all realization, we recover the periodic setting of \cite{Monsur}, and the above problem is indeed posed on the unit cell $Y= [0,1]^d$ with a transmission condition across $\Gamma$, which is the unit interface that separates $Y$ into $Y^-$ and $Y^+$. The unit cell is compact, and the natural space for the solution is $H^1_{\rm per}(Y^+) \times H^1(Y^-)$, i.e. $w^-_p \in H^1(Y^-)$, $w^+_p \in H^1(Y^+)$ and $w^+_p$ satisfies periodic conditions at the boundary $\partial Y$. This space enjoys a Poincar\'e inequality and the existence and uniqueness of the cell problem is more or less standard. In the general random case, the cell problem is posed on a non-compact domain and its solution, for each realization $\omega$, lives in $H^1_{\rm loc}(\Phi(\Rdp)) \times H^1_{\rm loc}(\Phi(\Rdm))$ which does not admit any Poincar\'e inequality. This loss of compactness is a main difficulty in stochastic homogenization. In the standard stationary ergodic setting, e.g. in Papanicolaou and Varadhan \cite{PV79} and Kozlov \cite{Kozlov}, where the coefficients are stationary with respect to all $\R^d$ vectors, it is possible to lift the auxiliary problem to the probability space, because there is a natural correspondence between the physical derivative $\nabla$ and the infinitesimal generator of the translation operator $\tau_x$, $x \in \R^d$, on $\Omega$. One can solve the lifted problem in the probability space, owing to the Weyl decomposition of $L^2$ vectors in $\Omega$, and then push this solution back to the physical space. We refer to the aforementioned references for the details; see also Chapter 7 of \cite{Jikov_book}.

The stationary ergodic setting in this paper is not standard: stationarity is with respect to all $\Z^d$ translations (rather than $\R^d$ translations) only. The program above breaks down since it is not clear how to lift the problem to probability space. We need to solve \eqref{eq:aux} in the physical space. We first regularize the problem by adding a small zero-order absorption term, to remedy the lack of Poincar\'e inequality. Then we solve the regularized problem in the space of locally uniform Sobolev spaces for each realization, by solving a truncated Dirichlet problem on bounded domains that exhaust the whole space; our method takes inspiration from the work of G\'erard-Varet and Masmoudi \cite{GV_Masmoudi} and that of Dalibard and Prange \cite{Dalibard_Prange} where boundary layer systems of Navier-Stokes equations in unbounded channels were studied. Usage of locally uniform Sobolev spaces to study problems on unbounded domains was pioneered by Kato \cite{Kato}. The regularized solutions turn out to be stationary and we establish averaged (in the probability space) estimates for the gradient of the regularized solution that are uniform with respect to the regularization parameter. These uniform estimates allow us to obtain, when the regularization is sent to zero, a converging subsequence along which the gradient and the jumps on the interfaces converge to a stationary fields. We find the desired solution to \eqref{eq:auxil} from those limits. Once this is completed, the homogenization theory is proved essentially by the standard oscillating test function method. Sublinearity of the solution of the cell problem is a subtle issue as always. We prove this property by invoking elliptic regularity for the gradient of the solution up to the interfaces. For $d = 2,3$, $A \in C^1$ and $\Gamma \in C^2$ are sufficient.

The rest of this paper is organized as follows. In Section \ref{sec:settings} we make precise the problem settings on the diffusion coefficients and the interfaces and state the main results. In Section \ref{sec:prelim} we record some preliminary results, which includes ergodic theorems for the special random setting of \cite{BLBL06}, the random extension operator of \cite{AGGJS13} and basic energy estimates for \eqref{eq:aniso}. The proof of our main theorem is detailed in Section \ref{sec:proof} using the standard oscillating test function method of Murat and Tartar \cite{murat} while the key to the proof, i.e. the study of the auxiliary problem, is in Section \ref{sec:auxil}. Finally, we conclude the paper in Section \ref{sec:discussion} by showing some properties of the homogenized problem and by discussing some possible further studies. 

{\bf Notations.} We use the standard notation $(\Omega,\F,\Pb)$ for probability spaces: $\omega \in \Omega$ is a realization, and $\E$ is the mathematical expectation with respect to the probability measure $\Pb$. The standard notations $L^p$ and $W^{k,p}$ are used for the Lebesgue space and the Sobolev space respectively, and $H^k$ is used as a short-hand notation for $W^{k,2}$. The spatial domain for these functional spaces are usually specified. $W_{\rm loc}^{k,p}$ and $W_{\rm uloc}^{k,p}$ denote, respectively, local Sobolev and locally uniform Sobolev spaces. When there is no risk of confusion, Einstein's summation convention is adopted so repeated indices are summed over, e.g. $a_{ij} \xi_i \xi_j = \sum_{i=1}^d \sum_{j=1}^d a_{ij}\xi_i\xi_j$. For any vector $\xi$ in $\R^d$ or in $\Z^d$, the symbol $|\xi| = (\sum_{j=1}^d \xi_j^2)^{\frac 1 2}$ denotes the Euclidean norm and $|k|_\infty = \max_{1\le j\le d} |k_j|$ denotes the supremum norm. We write $U\subset \subset V$ to mean $U$ is compactly contained in $V$. For any measurable subset $A$ of $\R^d$, $|A|$ denotes its Lebesgue measure. Finally, if $S$ is a smooth $d-1$ dimensional surface in $\R^d$, $d\sigma$ denotes the standard induced Lebesgue measure on the surface.
\section{Problem Settings and Main Results}
\label{sec:settings}

In this section we first describe the random settings for the elliptic coefficients in \eqref{eq:aniso} and the interfaces which divide the spatial domain. Then we state the main results of the paper.
\subsection{Random ensemble of surfaces}
\label{sec:form}

The random medium of this paper, i.e. the random coefficients and the random interfaces in \eqref{eq:aniso}, is obtained as the image of a periodic medium with periodic coefficients and periodic interfaces under a random deformation followed by a rescaling. Hence, we describe the periodic setting first.

{\bf Periodic setting.} Due to periodicity, the medium is determined on the unit cell. This cell is denoted by $Y = [0,1)^d$, the unit cube in $\R^d$. Let $Y^-$ be a simply connected open subset of $Y$ with smooth (say $C^2$ for $d=2,3$) boundary $\partial Y^-$. Hence this boundary is the unit interface and is denoted as $\Gamma_0$. This interface decomposes the unit cell to $Y^-$ and $Y^+ := Y\setminus \overline{Y^-}$. Therefore, $Y^-$, $\Gamma_0$ and $Y^+$ represent, respectively, the unit interior region, the separating surface and the outer environment. Set $\beta = \dist(\partial Y, \Gamma_0)$ and assume that $\beta \lesssim 1$, that is $\beta$ is smaller than but comparable to one. To build a periodically structure, we set for all $k \in \Z^d$,
$$
Y_k = Y + k, \quad Y^+_k = Y^+ + k, \quad Y^-_k = Y^- + k, \quad \Gamma_k = \Gamma_0 + k.
$$
The union of all separating surfaces is then written as $\Gamd = \cup_{k \in \Z^d} \Gamma_k$. The union of all interior regions enclosed by these surfaces is denoted by $\Rdm = \cup_{k\in \Z^d} Y^-_k$ and the outer environment is characterized as $\Rdp = \R^d\setminus (\Gamd \cup \Rdm)$, or equivalently $\Rdp = \cup_{k \in \Z^d}Y^+_k$. Clearly, $\Gamd$ and $\Rdpm$ are periodic. Further, $\Rdp$ is connected while $\Rdm$ has simply connected components that are separated by a distance that is at least $2\beta$.

In addition to the geometry of the periodic medium, we specify its physical properties. We assume that the interior region $\Rdm$ and the environment $\Rdp$ are filled with a material whose conductivity is characterized by a matrix valued function $\Atilde(y) = (\widetilde{a}_{ij}(y))$. We assume further that 
\begin{enumerate}
\item[(A1)] $d = 2,3$ and $\Atilde$ is $[0,1)^d$-periodic, i.e. 
\begin{equation}
\widetilde{a}_{ij}(y+k) = \widetilde{a}_{ij}(y), \quad \text{ for all } y \in \R^d, k \in \Z^d,
\label{eq:perio}
\end{equation}
\item[(A2)] $\Atilde \in C^{1}$ and $\Atilde$ is uniformly elliptic, i.e. for some $0 < \lambda \le \Lambda$ it holds that
\begin{equation}
\lambda |\xi|^2 \le \widetilde{a}_{ij}(y)\xi_i \xi_j \le \Lambda |\xi|^2, \quad \text{ for all } y \in \R^d, \xi \in \R^d.
\label{eq:ellip}
\end{equation}
\end{enumerate}
The surfaces $\Gamd$ separate the materials occupying $\Rdp$ and $\Rdm$, and the physical properties of the surfaces are described by the transmission condition for the potential fields in $\Rdp$ and $\Rdm$ as seen in \eqref{eq:aniso}. The geometry and the physical properties together complete the periodic model medium with interfaces. This periodic medium is exactly the one studied by Monsurr\`o and her coauthors in \cite{Monsur,DM04,DFM07}.

{\bf Random setting.} Following the idea of Blanc, Le Bris and Lions \cite{BLBL06,BLBL07}, who considered random diffusive media where the conductivity tensor $A$ is obtained as the image of a periodic tensor $\widetilde{A}$ under a random deformation, we construct our random medium, i.e. the conductivity tensor and the conductivity resistant interfaces, by randomly deforming a periodic one. Let $\Phi: \R^d \times \Omega \to \R^d$ be a random orientation preserving diffeomorphism on $\R^d$; that is for each $\omega \in \Omega$, $\Phi(\cdot,\omega)$ is a diffeomorphism on $\R^d$. Then, for the periodic structure $(\Gamd,\Rdm)$ and $\widetilde{A}$ defined above, under each realization $\Phi(\cdot,\omega)$, one obtains the deformed structure $\Phi(\Rdp)\cup \Phi(\Gamd)\cup \Phi(\Rdm)$ with conductivity coefficient $\Atilde \circ\Phi^{-1}$. Again, the physical importance of the interfaces will appear as a transmission condition for the potential fields across them. We refer to this medium as the reference random medium. Note that $\Phi(\Rdp)$ remains connected, and $\Phi(\Rdm)$ has connected components.

To model the heterogeneous medium whose structure and physical properties vary on a small scale of $\eps$, $0<\eps \ll 1$, we rescale the reference random medium. This is done by using the scaling operator $\eps Id: \R^d \to \R^d$ given by $y \mapsto \eps y$. Consequently, we obtain a connected environment $\eps \Phi(\Rdp)$, the separating interfaces $\eps \Phi(\Gamd)$ and the interior regions $\eps \Phi(\Rdm)$. Besides, the materials in $\eps \Phi(\Rdpm)$ have conductivity coefficient $A^\eps := \Atilde\circ\Phi^{-1}(\frac{\cdot}{\eps})$.

Finally, in the open bounded domain $D$ on which \eqref{eq:aniso} is posed, we would like to set $\Depspm = D\cap \eps\Phi(\Rdpm)$ and $\Gamma_\eps = D\cap \eps\Phi(\Gamd)$. However, $\partial D \cap \eps\Phi(\Gamd)$ may not be empty. In other words, the boundary $\partial D$ may cut certain components of $\eps \Phi(\Gamd)$. In \cite{AGGJS13}, \eqref{eq:aniso} models diffusion phenomena in a suspension of cells and we would like to avoid the cells being cut by the boundary of the domain. This requires a modification of the above proposal of $\Depspm$ near the boundary of $D$. In this paper and with this biological application in mind, we keep this constraint though it can be removed as long as the intersection of $\partial D$ and the interfaces makes sense in the physical application. We provide the details of this modification in the next subsection under some assumptions on the diffeomorphsim $\Phi(\cdot,\omega)$. 

\subsection{Stationary and ergodic deformations}
\label{sec:diffe}

Let $\Phi$ be the aforementioned random orientation preserving diffeomorphism of $\R^d$ defined on some probability space $(\Omega,\F,\Pb)$. Throughout the paper, we assume that $\F$ is countably generated so that $L^2(\Omega)$ is separable. We assume further that the probability space has the following structure.
\begin{enumerate}
\item[(S1)] The group $(\Z^d,+)$ acts on $\Omega$ by some action $\{\tau_k : \Omega \to \Omega\}_{k \in \Z^d}$. For all $k \in \Z^d$, the map $\tau_k$ is $\Pb$-preserving, i.e. $\Pb(\tau_k A) = \Pb(A)$ for all $A \in \F$.
\item[(S2)] The group action above is ergodic, i.e. $A \in \F$ and $\tau_k A = A$ for all $k\in \Z^d$ implies that $\Pb(A) \in \{0,1\}$.
\end{enumerate}

In this paper, we say that a locally integrable random process $F \in L^1_{\rm{loc}}(\R^d,L^1(\Omega))$ is stationary if for all $x \in \R^d$ and $\omega \in \Omega$ we have
\begin{equation}
\forall k \in \Z^d, \quad F(x+k, \omega) = F(x,\tau_k \omega).
\label{eq:stati}
\end{equation}
This notion of stationarity for random processes is different from the standard one e.g. in \cite{Jikov_book,Kozlov,PV79} where the action $\{\tau_z\}_{z\in \R^d}$ is used and \eqref{eq:stati} should be satisfied for all $k \in \R^d$. It is known that neither of the two notions is a special case of the other; nevertheless, both notions include periodic functions as a special case. Nevertheless, it is well known that, e.g. as shown in \cite{BLBL06,BLBL07}, such stationary processes still enjoy certain types of ergodic theorems.

The main assumptions on the random diffeomorphism $\Phi$ are: for all $\omega \in \Omega$,
\begin{enumerate}
\item[(T1)] The random field $\nabla \Phi(y,\omega)$ is stationary.
\item[(T2)] There exists a constant $\mu$ such that $\inf_{y \in \R^d} \det(\nabla \Phi(y,\omega)) \ge \mu > 0$.
\item[(T3)] There exists a constant $M < \infty$ such that $\sup_{y \in \R^d} |\nabla \Phi(y,\omega)| \le M < \infty$ and $\sup_{y \in \R^d} |D^2 \Phi(y,\omega)| \le M$.
\end{enumerate}
We call any $\Phi$ satisfying the above conditions a stationary random diffeomorphism. Let $\Psi$ be the inverse of $\Phi$. Then (A2) and (A3) implies that for all $\omega \in \Omega$,
\begin{equation}
\sup_{x \in \R^d}|\nabla \Psi(x, \omega)| \le M' < \infty, \quad \inf_{x\in \R^d} \det(\nabla \Psi(x,\omega)) \ge \mu' > 0.
\label{eq:Psibdd}
\end{equation}
Here, $\mu'$ and $M'$ are two constants depending on $\mu, M$ and the dimension $d$ but not on $\omega$. By the uniform Lipschitz assumption (A3), for any two points $y_1, y_2 \in \R^d$, we have $\Phi(y_1) - \Phi(y_2) \le M|y_1 - y_2|$ and similarly by \eqref{eq:Psibdd}, we have $
|\Phi(y_1) - \Phi(y_2)| \ge (M')^{-1}|y_1 - y_2|$. These estimates indicate that in the reference random medium, the interfaces are still well separated at least by a distance of $2\beta/M'$. After the rescaling, the interfaces in $\Gameps$ are well separated by a distance of $2\eps \beta/M'$.

Now we construct $\Gameps$ more carefully so that $\partial D$ does not cut any component of $\Gameps$. For each $\omega \in \Omega$, let $\widetilde{D}_{\eps^{-1}} = \Phi^{-1}(\frac{D}{\eps})$ be the preimage of $D$ under the map $\eps\Phi(\cdot,\omega)$. Let $\widetilde{D}'_{\eps^{-1}}$ be its subset that is $\beta$ away from the boundary, i.e.
$$
\widetilde{D}'_{\eps^{-1}} = \{z \in \widetilde{D}_{\eps^{-1}} ~|~ \dist(z, \partial \widetilde{D}_{\eps^{-1}}) \ge \beta \}.
$$
Let $\Ieps \subset \Z^d$ be the indices of the cubes inside $\widetilde{D}'_{\eps^{-1}}$, i.e. those of $\{Y_k \subset \widetilde{D}'_{\eps^{-1}}\}$. Then we set
\begin{equation}
\Gameps = \sum_{k\in \Ieps} \eps \Phi(\Gamma_k), \quad \Depsm = \sum_{k\in \Ieps} \eps \Phi(Y^-_k), \quad \Depsp = D\setminus \overline{\Depsm}.
\label{eq:Depsdef}
\end{equation}
That is, we only keep the deformed and rescaled cells that are inside $D$ and have a distance at least $\eps \beta/M'$ away from the boundary. We also define the following two subsets of $D$:
\begin{equation}
E_\eps = \sum_{k \in \mathcal{I}_\eps} \eps \Phi(Y_k), \quad\quad  K_\eps = D \setminus \overline{E_\eps}.
\label{eq:KEdef}
\end{equation}
The set $E_\eps$ encloses all the $\eps$-scale interfaces in $\Gameps$, the region inside these surfaces, i.e. $\Depsm$ and their immediate surroundings $\cup_{k \in \Ieps} \eps\Phi(Y_k^+)$. The set $K_\eps$ can be thought as a cushion layer close to the boundary that prevents the interfaces from touching the boundary. From the construction we verify that
\begin{equation}
\inf_{x \in \Depsm} \mathrm{ dist }(x,\partial D) \ge \eps \beta/M', \quad \text{ and }\quad  \sup_{x \in K_\eps} \mathrm{ dist }(x, \partial D) \le \eps \beta \sqrt{d}M.
\end{equation}
Hence, the interfaces $\Gameps$ are separated from $\partial D$ and the cushion layer is restricted to a vicinity of $\partial D$ whose thickness is comparable to $\eps$.

\begin{rem} We provide some examples. First if $\Phi = Id$ is the identity operator, we recover the periodic setting. If $\Phi$ is a deterministic diffeomorphism, we obtain a deterministic deformed medium. For a less trivial example, let $\bm{X} = \{X_k \;|\; k \in \Z^d\}$ be the set of i.i.d. Bernoulli variables with indices in $\Z^d$, i.e. each $X_k$ is either $0$ or $1$ with probability $\frac{1}{2}$. Set the probability space $(\Omega,\F,\Pb)$ to be the canonical space for the random process $\bm{X}$. That is, $\Omega = \{0,1\}^{\Z^d}$; $\F$ is the Borel $\sigma$-algebra generated by finite dimensional cylindrical sets in $\Omega$ and $\Pb$ is defined by setting, for any $A \in \F$,
$$
\Pb(A) = \Pb_0\{\bm{X} \in A\}
$$
where $\Pb_0$ is the underlying probability measure associated to the Bernoulli sequence. We then check that the group $\{\tau_k \;|\; k \in \Z^d\}$ which acts on $\Omega$ by 
\begin{equation*}
\tau_k \bm{X} = \tau_k \{X_{\ell} \;|\; \ell \in \Z^d\} = \{X_{\ell + k} \;|\; \ell \in \Z^d\}
\end{equation*}
is measure preserving and ergodic.

Now consider two smooth functions $\Phi_{0,1}: Y \to Y$ given by: $\Phi_{0} = Id$; $\Phi_1 - Id \not \equiv 0$ and $\Phi_1 - Id$ is compactly supported in $Y$, $|\nabla \Phi_1| \le M$ and $\mathrm{det}(\nabla \Phi_1) \ge \mu$. For each $\omega \in \Omega$, i.e. for each $\bm{X} = \{X_k \;|\; k\in \Z^d\}$ and for each $x \in \R^d$, with $[x]$ denoting the unique number in $\Z^d$ such that $x - [x] \in [0,1)^d$, we set
$$
\Phi(x,\omega) = [x] + \Phi_{X_{[x]}}(x-[x]).
$$
Then $\Phi(x,\omega)$ is a random diffeomorphism satisfying the aforementioned conditions. One checks that for each cube $Y_k$, $\Phi$ leaves its boundary unchanged and may or may not deform its interior according to the outcome of the Bernoulli variable $X_k$.
\end{rem}
\subsection{The main results}
\label{sec:results}

{\bf Assumptions (A).} Throughout the rest of this paper, we assume that the random coefficients $A^\eps$, the random surfaces $\Gameps(\omega)$, and the decomposition of $D$ into $\Depsp$ and $\Depsm$ in \eqref{eq:aniso} are constructed as in Section \ref{sec:form}. We assume that the hypothesis (A1)(A2) on the coefficient $A$, (S1)(S2) on the probability space and (T1)(T2)(T3) on the random diffeomorphism hold. We assume that $f \in L^2(D)$ in \eqref{eq:aniso}.

Due to the jump type transmission condition across the interfaces $\Gameps$, the solution $u_\eps$ that solves \eqref{eq:aniso} are piecewisely defined as $u_\eps^+$ on $\Depsp$ and $u_\eps^-$ on each components of $\Depsm$. A natural space for the solution is $H^1(\Depsp(\omega)) \times H^1(\Depsm(\omega))$. As a result, the functional space on which the solutions are defined depends on both $\eps$ and $\omega$. This poses some difficulty on making sense of the convergence of $\uepspm$. Hence for $\uepsp$ we introduce certain extension $\uepsp(\cdot,\omega) \mapsto \uepsext(\cdot, \omega)$ where the latter function belongs to $H^1(D)$ and agrees with $\uepsp$ on $\Depsp$; see Proposition \ref{prop:ext_Roe} below. For $\uepsm$, we take the trivial extension $\uepsm(\cdot,\omega) \mapsto Q\uepsm(\cdot,\omega)$ where the latter belongs to $L^2(D)$ and vanishes on $\Depsp$.

The main result of this paper is the almost sure homogenization of the problem \eqref{eq:aniso}. 
We first introduce two quantities that appear in the presentation of the homogenized problem. They are $\varrho$, the mean volume of the unit cube $Y$ after deformation, and $\theta$, the mean volume fraction of $Y^-$ after deformation. They are given by
\begin{equation}\label{eq:rho_theta}
\begin{aligned}
\varrho &= \E \int_{Y} \det \nabla \Phi(z) dz = \E |\Phi(Y,\omega)|,\\
\theta &= \frac{1}{\varrho}\ \E \int_{Y^-} \det \nabla \Phi(z) dz = \frac{\E |\Phi(Y^-,\omega)|}{\E |\Phi(Y,\omega)|}
\end{aligned}
\end{equation}
Due to the assumptions on $\Phi$, we verify that $0 < \theta < 1$.

Our first main result is on the solution to the auxiliary problem (cell problem) \eqref{eq:aux}. We recall that by construction, $A = \Atilde \circ\Psi$ where $\Atilde$ is the periodic coefficient, and $\Psi$ is the inverse of the random diffeomorphism. Recall that the assumptions at the beginning of this subsection are invoked.

\begin{theorem}\label{thm:auxil} For each $p \in \R^d$, there exists a function $w_p(y,\omega)$ defined on $\R^d\times \Omega$, of the form $\widetilde{w}_p(\Psi(y,\omega),\omega)$ with $\widetilde{w}_p = (\widetilde{w}^+_p,\widetilde{w}_p^-) \in L^2(\Omega,H^1_{\rm{loc}}(\Rdp) \times H^1_{\rm{loc}}(\Rdm))$ and satisfies: $(\nabla \widetilde{w}^+_p,\nabla \widetilde{w}^-_p)$ is stationary, $\widetilde{w}^+_p$ is the restriction of $\widetilde{w}^{\rm ext}_p \in H^1_{\rm loc}(\R^d)$ in $\Rdp$ whose gradient $\nabla \widetilde{w}^{\rm ext}_p$ is stationary and satisfies
\begin{equation}
\E \int_{Y} \nabla \widetilde{w}^{\rm ext}_p(y,\omega) dy = 0.
\label{eq:auxil_avg}
\end{equation}
Moreover, for a.e. $\omega \in \Omega$, $w_p(\cdot,\omega)$ sublinear at infinity and $w_p(\cdot,\omega)$ is the weak solution to the following problem
\begin{equation}
\left\{
\begin{aligned}
&-\nabla \cdot\left( A (y,\omega) \nabla \left[w_p^\pm(y,\omega) + p\cdot y\right] \right) = 0, &\quad &\text{ in } \  &\Phi(\Rdpm,\omega),\\
&\frac{\partial}{\partial \nu_A} w_p^+(y,\omega) = \frac{\partial}{\partial \nu_A} w_p^-(y,\omega), &\quad &\text{ on } \  &\Phi(\Gamd,\omega),\\
&w_p^+(y,\omega) - w_p^-(y,\omega) = \frac{\partial}{\partial \nu_A} w_p^+(y,\omega) + \nu_y \cdot Ap, &\quad &\text{ on } \  &\Phi(\Gamd,\omega).
\end{aligned}
\right.
\label{eq:auxil}
\end{equation}
The solution $w_p = (w^+_p,w^-_p)$ is unique up to an additive constant $C(\omega)$.
\end{theorem}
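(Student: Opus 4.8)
The plan is to remove the deformation by a change of variables, build the corrector in the fixed $\Z^d$-periodic reference geometry by a vanishing-absorption regularization solved in locally uniform Sobolev spaces, and then transfer the result back by composition with $\Phi$.

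\textbf{Step 1 (reduction to the reference geometry and regularization).} Put $z=\Psi(y,\omega)$ and $\widetilde w_p^\pm = w_p^\pm\circ\Phi(\cdot,\omega)$. The change of variables turns the ``full'' corrector equation $-\nabla_y\cdot(A\nabla_y[w_p^\pm + p\cdot y])=0$ into $-\nabla_z\cdot\left(B(z,\omega)\left(\nabla_z\widetilde w_p^\pm + F_p(z,\omega)\right)\right)=0$ on the \emph{periodic} decomposition $\Rdp\cup\Gamd\cup\Rdm$, together with the correspondingly transformed flux-continuity and Robin jump conditions on $\Gamd$; here $B=(\det\nabla\Phi)\,(\nabla\Phi)^{-1}\Atilde\,(\nabla\Phi)^{-T}$ and $F_p$ is a bounded vector field assembled from $\nabla\Phi$, $\Atilde$ and $p$. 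By (T1)--(T3) and periodicity of $\Atilde$, both $B$ (uniformly elliptic and Lipschitz) and $F_p$ are \emph{stationary}, though neither is periodic. For $\delta>0$ I regularize by adding the absorption term $\delta\,\widetilde w_\delta^\pm$; this restores coercivity in the absence of a Poincar\'e inequality on the unbounded domain and, crucially, it pins down the additive constant, which is what will let uniqueness be promoted to stationarity.

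\textbf{Step 2 (the regularized problem in $W^{1,p}_{\mathrm{uloc}}$).} For each fixed $\omega$ and $\delta$ I would solve the regularized transmission problem on $\R^d$ by exhaustion. On each ball $B_n$ the regularized problem with homogeneous Dirichlet data on $\partial B_n$ is well posed by Lax--Milgram, since the transmission bilinear form --- with the interface jump term entering with a favorable sign, as in the energy estimate of \cite{Monsur} recalled in Section \ref{sec:prelim} --- is coercive; exploiting the exponential spatial decay built into the absorptive operator, a Caccioppoli-type argument gives bounds on these solutions in the locally uniform space $W^{1,p}_{\mathrm{uloc}}$ that are independent of $n$, so $n\to\infty$ produces a solution $\widetilde w_\delta$ on $\R^d$, and the same decay mechanism gives uniqueness within $W^{1,p}_{\mathrm{uloc}}$. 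This follows the strategy of G\'erard-Varet and Masmoudi \cite{GV_Masmoudi} and of Dalibard and Prange \cite{Dalibard_Prange} in the framework of Kato \cite{Kato}. \textbf{I expect this step to be the main obstacle}: the $W^{1,p}_{\mathrm{uloc}}$ a priori estimates and the uniqueness statement must be carried through the transmission conditions and the perforated geometry, which forces one to invoke up-to-the-interface elliptic estimates.

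\textbf{Step 3 (stationarity and a uniform estimate).} Stationarity of $B$, $F_p$ and the geometry, together with uniqueness of the $W^{1,p}_{\mathrm{uloc}}$ solution \emph{including its constant}, gives $\widetilde w_\delta(\cdot+k,\omega)=\widetilde w_\delta(\cdot,\tau_k\omega)$ for every $k\in\Z^d$. Applying the extension operator of \cite{AGGJS13} (Proposition \ref{prop:ext_Roe}), which is built cell by cell and commutes with the $\Z^d$-action, the extension $E\widetilde w_\delta^+\in H^1_{\rm loc}(\R^d)$ is stationary; integrating $\nabla(E\widetilde w_\delta^+)$ over $Y$ and cancelling the contributions of opposite faces of the cube yields $\E\int_Y\nabla(E\widetilde w_\delta^+)=0$. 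Testing the weak formulation against $\widetilde w_\delta$ cut off to $B_R$, dividing by $R^d$ and invoking the $\Z^d$-ergodic theorem of Section \ref{sec:prelim} (in the spirit of \cite{BLBL06}), I obtain, uniformly in $\delta$, the estimate $\E\int_{Y^+}|\nabla\widetilde w_\delta^+|^2 + \E\int_{Y^-}|\nabla\widetilde w_\delta^-|^2 + \E\int_{\Gamma_0}|\widetilde w_\delta^+-\widetilde w_\delta^-|^2 + \delta\,\E\int_Y|\widetilde w_\delta|^2 \le C|p|^2$.

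\textbf{Step 4 (passage to the limit, sublinearity, uniqueness).} Along a subsequence $\delta_n\downarrow 0$ this bound lets me extract stationary weak limits of $\nabla(E\widetilde w_{\delta_n}^+)$, of $\nabla\widetilde w_{\delta_n}^-$ and of the interface jumps, while $\delta\,\E\int_Y|\widetilde w_\delta|^2\le C|p|^2$ forces $\delta_n\widetilde w_{\delta_n}\to 0$ against any compactly supported test function; hence the limits satisfy the weak form of the transformed problem with the absorption gone. The limit of $\nabla(E\widetilde w_{\delta_n}^+)$ is curl-free (a closed constraint under weak limits) and of zero mean over $Y$, so it is the gradient of some $\widetilde w_p^{\rm ext}\in H^1_{\rm loc}(\R^d)$ with stationary gradient satisfying \eqref{eq:auxil_avg}; I then set $\widetilde w_p^+=\widetilde w_p^{\rm ext}|_{\Rdp}$, reconstruct $\widetilde w_p^-$ on each component of $\Rdm$ from the limiting gradient there, and fix the constants component by component --- passing the exact transmission identities of the approximants $\widetilde w_{\delta_n}$ to the limit, using local $H^1$-compactness to recover the traces --- so that the Robin jump condition holds on $\Gamd$. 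Then $w_p=\widetilde w_p\circ\Psi$ solves \eqref{eq:auxil}. Sublinearity of $w_p(\cdot,\omega)$ at infinity (equivalently of $\widetilde w_p$, since $\Phi$ is bi-Lipschitz with uniform constants) follows, for a.e.\ $\omega$, from the $\Z^d$-ergodic theorem applied to the stationary field $\nabla\widetilde w_p$ together with interior and up-to-interface $C^{1,\alpha}$ estimates for the transmission problem --- this is precisely where $d=2,3$, $\Atilde\in C^1$ and $\Gamma\in C^2$ are used. Finally, for uniqueness up to a constant, the difference $v=(v^+,v^-)$ of two solutions solves the homogeneous transmission problem, has stationary mean-zero gradient and is sublinear; testing against $v\,\eta_R^2$ with $\eta_R$ a standard cutoff of $B_R$ and letting $R\to\infty$ --- using the ergodic theorem to identify the limit of $R^{-d}\int_{B_R}|\nabla v|^2$ and the sublinearity of $v$ to control the cut-off error terms --- forces $\nabla v\equiv 0$ and $v^+=v^-$ on $\Gamd$, whence, since $\Rdp$ is connected and each component of $\Rdm$ abuts an interface, $v\equiv C(\omega)$.
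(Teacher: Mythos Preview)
Your proposal is correct and follows essentially the same architecture as the paper: regularize by a $\delta$-absorption, solve the regularized problem realization-by-realization in locally uniform Sobolev spaces via exhaustion and a Saint-Venant/backward-induction estimate, deduce stationarity from uniqueness, obtain a $\delta$-uniform energy bound, pass to the limit, and finish with sublinearity and uniqueness. The differences are cosmetic: you pull everything back to the periodic reference geometry at the outset, whereas the paper works mostly in the deformed geometry and switches coordinates only when convenient; for the $\delta$-uniform estimate you test against a cutoff on $B_R$ and invoke the ergodic theorem, while the paper tests directly on the deformed unit cell $\Phi(Y)$ and kills the boundary term by stationarity after taking expectation (slightly cleaner, no ergodic theorem needed here); and for sublinearity you invoke up-to-interface $C^{1,\alpha}$ Schauder estimates, whereas the paper uses the difference-quotient method to get $H^2$ regularity and then Sobolev embedding to reach $L^s$, $s>d$, for the gradient. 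One point you pass over that the paper treats with some care: after sending $\delta\to 0$ the weak limits live in $L^2(\Omega,L^2_{\rm loc})$, so what you obtain first is the \emph{annealed} weak formulation (test functions in $\Omega\times\R^d$); the paper isolates this as a separate notion and proves, via a separability/density argument (its Proposition~4.3), that an annealed solution is a distributional solution for a.e.\ $\omega$ --- you should make this step explicit rather than folding it into ``the limits satisfy the weak form.''
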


We note the problem is posed on the whole space $\Phi(\Rdp,\omega) \cup \Phi(\Gamd,\omega) \cup \Phi(\Rdm,\omega)$, with infinitely many interfaces in $\Phi(\Gamd,\omega)$. We emphasize again that, due to the lack of compactness and the $\Z^d$-stationarity of $\nabla \Phi$, solving this auxiliary problem is nontrivial.

For the main result on the homogenization theory of \eqref{eq:aniso}, we define $A^0 = (a^0_{ij})$,  the homogenized conductivity coefficients by
\begin{equation}
a^0_{ij} : = \frac{1}{\varrho}\ \E\left( \int_{\Phi(Y^+,\omega)} e_j \cdot A(e_i + \nabla w^+_{e_i}) dx + \int_{\Phi(Y^-,\omega)} e_j \cdot A (e_i + \nabla w^-_{e_i}) dx\right).
\label{eq:Abardef}
\end{equation}

\begin{theorem} \label{thm:homog} Let $A^0 = (a^0_{ij})$ be a deterministic constant matrix defined above and let $u_0 \in H^1(D)$ be the unique solution to \eqref{eq:homog}. There exists a subset $\Omega_* \subset \Omega$ with $\Pb(\Omega_*) = 1$, and for each $\omega \in \Omega_*$, the sequence of unique solutions $u_\eps(\cdot, \omega)$ to \eqref{eq:aniso} satisfy that as $\eps \to 0$,
\begin{enumerate}
\item[\upshape(i)] The function $u_\eps$ converges strongly in $L^2(D)$ to $u_0$.
\item[\upshape(ii)] The extended function $\uepsext(\cdot,\omega) \in H^1_0(D)$ of $\uepsp(\cdot,\omega)$ given by the extension operator of Proposition \ref{prop:ext_D} converges weakly in $H^1(D)$ to $u_0$.
\item[\upshape(iii)] The trivial extension $Q \uepsm(\cdot,\omega)$ of $\uepsm(\cdot,\omega)$ converges weakly in $L^2(D)$ to $\theta u_0$.
\item[\upshape(iv)] The flux $\chi_{D_\eps^+} A^\eps \nabla \uepsp + \chi_{D_\eps^-} A^\eps \nabla \uepsm$ converges weakly in $(L^2(D))^d$ to the homogenized flux $A^0\nabla u_0$.
\end{enumerate} 
\end{theorem}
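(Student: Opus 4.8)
### Proof Proposal for Theorem \ref{thm:homog}

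The plan is to run the standard oscillating test function method of Murat and Tartar, using the solution $w_p$ of the auxiliary problem (Theorem \ref{thm:auxil}) to build oscillating test functions at scale $\eps$. First I would establish the a priori bounds: testing \eqref{eq:aniso} against $\uepsext$ (the Roe/Cioranescu-type extension of $\uepsp$), using ellipticity (A2), the boundedness of the extension operator of Proposition \ref{prop:ext_D}, and absorbing the boundary jump term $\eps\int_{\Gameps}|\uepsp-\uepsm|^2/\eps \, d\sigma \ge 0$ coming from the Robin condition, I get $\|\uepsext\|_{H^1(D)} + \|\nabla\uepsm\|_{L^2(\Depsm)} + \eps^{-1/2}\|\uepsp-\uepsm\|_{L^2(\Gameps)} \le C\|f\|_{L^2(D)}$ uniformly in $\eps$ and $\omega$. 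This gives, up to a subsequence (for fixed $\omega$ in a full-measure set to be determined), weak limits $\uepsext \rightharpoonup u_*$ in $H^1_0(D)$, $Q\uepsm \rightharpoonup v_*$ in $L^2(D)$, and for the flux $\xi_\eps := \chi_{\Depsp}A^\eps\nabla\uepsp + \chi_{\Depsm}A^\eps\nabla\uepsm \rightharpoonup \xi_*$ in $(L^2(D))^d$. By the Rellich theorem $\uepsext \to u_*$ strongly in $L^2$; combined with the fact that $\uepsm$ differs from $\uepsext$ only on the small set $\Depsm$ and on its complement, and the ergodic theorem applied to $\chi_{\Phi(\Rdm)}$ (giving $|\Depsm|/|D| \to \theta$), one identifies $v_* = \theta u_*$ and $u_\eps \to u_*$ strongly in $L^2(D)$. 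The remaining task is to show $u_* = u_0$ and $\xi_* = A^0\nabla u_0$.

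For the identification step, fix $p\in\R^d$ and set $w_p^\eps(x,\omega) := \eps\, w_p(x/\eps,\omega)$, with its extension $w_p^{\eps,\rm ext}(x) := \eps\, \widetilde w_p^{\rm ext}(\Psi(x/\eps,\omega),\omega)$. By stationarity of $\nabla\widetilde w_p^{\rm ext}$ and $\nabla\Phi$, the ergodic theorems from Section \ref{sec:prelim} (for the $\Z^d$-stationary setting of \cite{BLBL06}) give, for a.e. $\omega$: $w_p^\eps \to 0$ strongly in $L^2(D)$ (this is precisely the sublinearity statement in Theorem \ref{thm:auxil}), $\nabla w_p^{\eps,\rm ext} \rightharpoonup 0$ weakly in $L^2(D)$ (by \eqref{eq:auxil_avg}), and the weak limit of the oscillating flux $\eta_p^\eps := \chi_{\Depsp}A^\eps(p+\nabla w_p^+(x/\eps)) + \chi_{\Depsm}A^\eps(p+\nabla w_p^-(x/\eps))$ equals the constant vector $A^0 p$ by the formula \eqref{eq:Abardef} and the ergodic theorem. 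The key algebraic identity is that $\eta_p^\eps$ is divergence-free in the appropriate weak sense: because $w_p$ solves \eqref{eq:auxil}, for any $\varphi\in C_c^\infty(D)$, $\int_D \eta_p^\eps\cdot\nabla\varphi = 0$ once $\eps$ is small enough that $\mathrm{supp}\,\varphi$ avoids $K_\eps$ — here one must carefully check that the transmission conditions in \eqref{eq:auxil} make the interface contributions cancel (flux continuity across $\Phi(\Gamd)$) while the jump condition is exactly what is needed for the scaled Robin condition to be compatible. Symmetrically, $\xi_\eps$ is "div $= f$" tested against $w_p^{\eps,\rm ext}$, again modulo the jump terms which are controlled by the $\eps^{-1/2}$-bound above (the scaled jump $\int_{\Gameps}(\uepsp-\uepsm)(\cdot)$ against test functions is $O(\eps^{1/2})$).

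The heart is then the div-curl / compensated-compactness argument: take $\varphi\in C_c^\infty(D)$, use $\varphi\,w_p^{\eps,\rm ext}$ as a test function in the weak formulation of \eqref{eq:aniso} for $\uepspm$ and $\varphi\,\uepsext$ in the weak formulation of the auxiliary equation \eqref{eq:auxil}, subtract, and pass to the limit. The bulk terms $\int \xi_\eps\cdot\nabla w_p^{\eps,\rm ext}\varphi$ and $\int \eta_p^\eps\cdot\nabla\uepsext\,\varphi$ are handled by the classical Murat–Tartar lemma (one factor converges weakly in $L^2$, the other's divergence is compact in $H^{-1}$), giving in the limit $\int \xi_*\cdot p\,\varphi = \int A^0 p\cdot\nabla u_*\,\varphi$ for all $p$ and all $\varphi$, hence $\xi_* = (A^0)^T\nabla u_* = A^0\nabla u_*$ using symmetry of $A^0$ (inherited from symmetry of $\Atilde$, to be noted). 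Then passing to the limit in \eqref{eq:aniso} tested against $\varphi\in C_c^\infty(D)$ directly gives $-\nabla\cdot\xi_* = f$ (the jump and $K_\eps$ terms vanish as above), so $u_*\in H^1_0(D)$ solves \eqref{eq:homog}; by uniqueness $u_* = u_0$, the whole sequence converges (not just a subsequence), and (i)–(iv) follow. The set $\Omega_*$ is the countable intersection over a dense set of $p\in\R^d$ (and a countable dense family of test functions) of the full-measure sets on which the relevant ergodic theorems hold.

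I expect the main obstacle to be the careful bookkeeping of the interface and boundary-cushion terms when integrating by parts: one must verify that the Robin jump condition in \eqref{eq:aniso} (with the $\eps$ factor) and the jump condition in \eqref{eq:auxil} (without it, but with the extra $\nu_y\cdot Ap$ term) are *precisely* matched under the $\eps$-rescaling $w_p^\eps = \eps w_p(\cdot/\eps)$, so that the surface integrals over $\Gameps$ either cancel in pairs or are $O(\eps^{1/2})$; and that nothing is lost on the cushion layer $K_\eps$, whose measure is $O(\eps)$ by \eqref{eq:KEdef}, and on which $\uepsp$ is simply harmonic-type with controlled energy. A secondary technical point is that the extension operators of Propositions \ref{prop:ext_Roe}/\ref{prop:ext_D} have operator norms uniform in $\eps$ and $\omega$ (this is where the uniform separation distance $2\eps\beta/M'$ of the interfaces, and (T2)(T3), enter), which is needed both for the a priori bound and for the strong $L^2$ identification of the limits.
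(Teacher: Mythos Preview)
Your overall strategy---Tartar's oscillating test function method, with the corrector $w_p$ from Theorem~\ref{thm:auxil} rescaled to scale $\eps$, combined with the a priori energy estimates and the ergodic theorem---is the same as the paper's, and the identification of $\Omega_*$, the energy bounds, the compactness step, and the treatment of $Q\uepsm$ via the ergodic limit of $\chi_{\Depsm}$ are all correct. One cosmetic point: you only need $p\in\{e_1,\dots,e_d\}$, not a dense set.

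There is, however, a genuine gap in the identification step. You propose to test \eqref{eq:aniso} against the \emph{single} function $\varphi\,w_p^{\eps,\rm ext}$ (used as both the $+$ and $-$ component) and the auxiliary equation against $\varphi\,\uepsext$, then invoke div--curl. The trouble is on $\Depsm$: there $\nabla w_p^{\eps,\rm ext}$ is the gradient of the \emph{extension}, not $\nabla w_p^-(\cdot/\eps)$, and likewise $\nabla\uepsext$ is not $\nabla\uepsm$. Consequently the pointwise symmetry identity $A^\eps\nabla\uepspm\cdot(p+\nabla w_p^\pm)=A^\eps(p+\nabla w_p^\pm)\cdot\nabla\uepspm$ that underlies Tartar's method fails on $\Depsm$, and the residual
\[
R_\eps=\chi_{\Depsm}\Big(A^\eps\nabla\uepsm\cdot(\nabla w_p^{\rm ext}-\nabla w_p^-)(\tfrac{\cdot}{\eps})
-A^\eps(p+\nabla w_p^-(\tfrac{\cdot}{\eps}))\cdot(\nabla\uepsext-\nabla\uepsm)\Big)
\]
is a product of two merely bounded $L^2(\Depsm)$ sequences with no smallness. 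Equivalently, if you run your two equations separately to the limit, the first one (tested with $\varphi w_p^{\eps,\rm ext}$, which $\to 0$ strongly) collapses to $0=0$ and carries no information; the second one yields only the trivial identity $\int A^0p\cdot\nabla(u_*\varphi)=0$. So the conclusion $\xi_*\cdot p=A^0 p\cdot\nabla u_*$ does not follow from your choice of test functions.

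The paper's remedy is exactly the one you should use: take the \emph{pair} of oscillating test functions $w^\eps_{1p}=p\cdot x+\eps w^{\rm ext}_p(\cdot/\eps)$ on $\Depsp$ and $w^\eps_{2p}=p\cdot x+\eps\,Qw^-_p(\cdot/\eps)$ on $\Depsm$ (note the inclusion of $p\cdot x$ and the use of $w^-_p$, not its extension, on $\Depsm$). Test \eqref{eq:aniso} with $(\varphi w^\eps_{1p},\varphi w^\eps_{2p})$ and test the rescaled auxiliary weak formulation \eqref{eq:wep12weak} with $(\varphi\uepsp,\varphi\uepsm)$, then subtract. Because both weak formulations carry the \emph{same} interface bilinear form $\eps^{-1}\int_{\Gameps}(\cdot^+-\cdot^-)(\cdot^+-\cdot^-)$, the surface integrals cancel \emph{exactly} (not merely $O(\eps^{1/2})$), and by symmetry of $A^\eps$ the bulk terms in which $\nabla$ lands on the oscillating factor also cancel exactly on both $\Depsp$ and $\Depsm$. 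What survives is
\[
\int_D\big(Q(A^\eps\nabla w^{\eps+}_{e_k})+Q(A^\eps\nabla w^{\eps-}_{e_k})\big)\cdot\nabla\varphi\,\uepsext
-\int_D\big(w^\eps_{1e_k}\,Q(A^\eps\nabla\uepsp)+w^\eps_{2e_k}\,Q(A^\eps\nabla\uepsm)\big)\cdot\nabla\varphi
+\int_D f\varphi\,x_k + o(1),
\]
each term a product of a strongly convergent factor with a weakly convergent one, whence $\xi_*= A^0\nabla u_*$ directly (no appeal to symmetry of $A^0$ is needed). With this correction your outline becomes the paper's proof.
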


The homogenized equation \eqref{eq:homog} has unique solution; in fact one can verify that the homogenized conductivity $A^0$ is uniformly elliptic; see Section 6.

\section{Preliminary Results}
\label{sec:prelim}

We recall some facts that will be used later. These include properties of ergodic processes in the sense of \eqref{eq:stati}, functional spaces defined on ``perforated" domains and extension operators, and some basic energy estimates for \eqref{eq:aniso}.

\subsection{Stationary ergodic random processes}
\label{sec:stati}

As mentioned earlier, the notion of stationarity in this paper is different from the standard one, e.g. in \cite{PV79,Kozlov,Jikov_book}. Nevertheless, the following version of ergodic theorems (see e.g. Dunford and Schwartz \cite{Dunford} and Krengel \cite{Krengel}) hold.
\begin{proposition}\label{prop:ergod}
{\upshape(i)} Let $F \in L^p(\Omega,L^p_{\rm loc}(\R^d))$, $p \in [1,\infty)$, be a stationary random process in the sense of \eqref{eq:stati}. Then
\begin{equation}
F\left(\frac{z}{\eps}, \omega \right)  \ \xrightharpoonup[\eps \to 0]{L^p_{\rm loc}} \ \E\left(\int_Y F(z,\cdot) dz \right), \quad \text{ for a.e.}\ \omega \in \Omega.
\end{equation}

{\upshape(ii)} If $F \in L^\infty_{\rm loc}(\R^d,L^1(\Omega))$ is a stationary random process, then the above convergence holds in $L^\infty$ weak-$*$ for a.e. $\omega \in \Omega$.
\end{proposition}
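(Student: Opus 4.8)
The plan is to deduce the weak convergence in (i) from the following scalar statement: for a.e.\ $\omega$ and every cube $Q\subset\R^d$, one has $\int_Q F(z/\eps,\omega)\,dz\to|Q|\,\bar F$ as $\eps\to0$, where $\bar F:=\E\int_Y F(z,\cdot)\,dz$. Granting this, weak $L^p_{\rm loc}$ convergence follows by combining it with a uniform local $L^p$ bound and a density argument. To prove the scalar statement I would change variables $y=z/\eps$, so that $\int_Q F(z/\eps,\omega)\,dz=\eps^d\int_{Q/\eps}F(y,\omega)\,dy$ is an average of $F(\cdot,\omega)$ over the cube $Q/\eps$, whose side length tends to infinity. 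I then tile $Q/\eps$ by the unit lattice cubes $Y_k=k+[0,1)^d$: let $\mathcal S_\eps=\{k\in\Z^d:Y_k\subset Q/\eps\}$ and let $R_\eps$ be the remaining boundary layer, so that $\eps^d\#\mathcal S_\eps\to|Q|$ and $\eps^d|R_\eps|=O(\eps)$. By the stationarity \eqref{eq:stati}, $\int_{Y_k}F(y,\omega)\,dy=\int_Y F(y,\tau_k\omega)\,dy=g(\tau_k\omega)$ with $g(\omega):=\int_Y F(y,\omega)\,dy\in L^1(\Omega)$ (by H\"older on the finite-measure set $Y$ and Jensen). Hence the main term is $\eps^d\sum_{k\in\mathcal S_\eps}g(\tau_k\omega)$, a Birkhoff average over the discrete cube $\mathcal S_\eps$.

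Next I would apply the multidimensional pointwise ergodic theorem for the measure-preserving ergodic $\Z^d$-action $\{\tau_k\}$ (see \cite{Dunford,Krengel}): for $\omega$ outside a null set, $\frac1{\#B}\sum_{k\in B}g(\tau_k\omega)\to\E g$ along every sequence of discrete cubes $B$ with side length tending to infinity, so the main term converges to $|Q|\,\E g=|Q|\,\bar F$. For the boundary layer I apply the same theorem to $\ell(\omega):=\int_Y|F(y,\omega)|\,dy\in L^1(\Omega)$ and to the two discrete cubes of cells that, respectively, meet and are contained in $Q/\eps$: both $\eps^d$-weighted sums tend to $|Q|\,\E\ell$, hence their difference — which dominates $\eps^d\int_{R_\eps}|F(y,\omega)|\,dy$ — tends to $0$. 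The passage from convergence along a sequence to the genuine limit $\eps\to0$, and from cubes based at the origin to translated ones, is handled in the standard way via Wiener's maximal ergodic inequality, which moreover lets a single exceptional null set serve simultaneously for a countable family of cubes.

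To finish (i), I would apply the scalar statement with $F$ replaced by $|F|^p$ — again $\Z^d$-stationary and in $L^1(\Omega,L^1_{\rm loc}(\R^d))$ — to get that, for a.e.\ $\omega$, the family $\{F(\cdot/\eps,\omega)\}_{0<\eps\le1}$ is bounded in $L^p(K)$ for each compact $K$. Fixing a countable family $\mathcal G$ of cubes with rational vertices whose indicators have dense linear span in $L^{p'}_{\rm loc}$, on the full-measure set where the scalar statement holds for every $Q\in\mathcal G$ and where the $|F|^p$-bound holds, the uniform $L^p$ bound together with convergence on $\mathcal G$ gives $F(\cdot/\eps,\omega)\rightharpoonup\bar F$ in $L^p_{\rm loc}$ when $p\in(1,\infty)$. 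At the endpoint $p=1$, where reflexivity is lost, I would replace weak compactness of bounded sets by equi-integrability: applying the scalar statement to $|F|$ and to the truncations $|F|\wedge M$ shows that $\sup_\eps\int_A|F(z/\eps,\omega)|\,dz$ is small when $|A|$ is small (uniformly in $\eps$), so $\{F(\cdot/\eps,\omega)\}$ is relatively weakly compact in $L^1_{\rm loc}$ by the Dunford--Pettis theorem, with weak limit identified as $\bar F$ on $\mathcal G$. Part (ii) then follows because, under its hypothesis, $\{F(\cdot/\eps,\omega)\}$ is in addition bounded in $L^\infty_{\rm loc}$ for a.e.\ $\omega$, so the weak $L^2_{\rm loc}$ convergence provided by (i) improves to weak-$*$ convergence in $L^\infty_{\rm loc}$.

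The only genuinely delicate point, I expect, is organizational rather than deep: since stationarity is only with respect to $\Z^d$ and not $\R^d$, one must use the multiparameter ergodic theorem and ensure that a single null set works simultaneously for all scales $\eps\to0$ and all translated test cubes — precisely what the maximal ergodic inequality secures. The geometric ingredient (tiling $Q/\eps$ by unit cubes and discarding the $O(\eps)$ boundary layer) and the functional-analytic upgrade (density for $p>1$, equi-integrability for $p=1$) are routine.
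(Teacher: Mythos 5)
The paper does not actually prove Proposition~\ref{prop:ergod}: it cites Dunford--Schwartz and Krengel for the $\Z^d$-ergodic theorem, points to Theorem~7.2 of \cite{Jikov_book} for the $\R^d$-stationary analogue, and states that the $\Z^d$ case ``is of the same spirit.'' Your proposal is, in effect, a careful write-up of exactly the argument that the paper is gesturing at, and it is correct in its main lines. The reduction of the weak statement to the scalar statement $\int_Q F(z/\eps,\omega)\,dz\to |Q|\,\bar F$, the change of variables to a lattice average, the stationarity identity $\int_{Y_k}F\,dy=g(\tau_k\omega)$ with $g\in L^1(\Omega)$, the control of the boundary layer by the difference of two lattice cube sums, the upgrade to a uniform local $L^p$ bound via $|F|^p$, and the use of Dunford--Pettis at the endpoint $p=1$ are all the standard ingredients and are assembled in the right order.

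Two points you flag as ``organizational'' deserve a sentence of caution, because they are where a naive invocation of Birkhoff's theorem would break. First, the discrete boxes $\mathcal S_\eps$ are not anchored at the origin once the test cube $Q$ is not: as $\eps\to0$ they are boxes of diverging side length $L/\eps$ whose corner drifts off to infinity like $x_0/\eps$. Pointwise convergence of such ``moving averages'' is not a consequence of the plain Birkhoff theorem and can fail for badly behaved sequences of regions; what saves the day here is exactly that the drift is proportional to the side length, so the boxes satisfy the Tempel'man regularity condition $\sup_n \#(B_n-B_n)/\#B_n<\infty$, and then Wiener's maximal inequality gives a.s.\ convergence with a single null set serving all (rational) cubes. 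You name the right tool, but it would be worth saying explicitly that this is a Tempel'man/Wiener regular Følner family rather than leaving it as ``standard.'' Second, your reading of the hypothesis of part~(ii) — that it yields a pointwise-in-$\omega$ $L^\infty$ bound on $F(\cdot/\eps,\omega)$, uniformly in $\eps$ — strictly speaking requires $\operatorname{ess\,sup}_{y\in Y}|F(y,\omega)|<\infty$ for a.e.\ $\omega$, which is stronger than the literal reading of ``$L^\infty_{\rm loc}(\R^d,L^1(\Omega))$.'' In the paper's applications ($\chi_{\Rdm}$, $\det\nabla\Phi$) $F$ is bounded jointly, so this is harmless, but the hypothesis should be understood in that stronger sense for your final step to go through.

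Neither of these is a gap in the mathematics; they are the two genuine pressure points of the $\Z^d$-ergodic theorem as the paper uses it, and you correctly identify them. Your argument is sound and fills in what the paper leaves to the references.
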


The above conclusions in the $\R^d$-stationary setting can be found in Theorem 7.2 of \cite{Jikov_book} and the $\Z^d$-stationary setting is of the same spirit. Since we mainly deal with functions on the deformed space, we will encounter functions which are not stationary themselves but their preimage before the deformation is. Such a function can be written as $g\circ\Psi(y,\omega)$ where $g$ is stationary. We have the following result.

\begin{lemma}\label{lem:gstat} {\upshape(i)} Let $g \in L^p(\Omega, L^p_{\rm loc}(\R^d))$, $p \in (1,\infty)$, be a stationary process in the sense of \eqref{eq:stati}. Let $\Psi$ be defined as in section \ref{sec:diffe}. Then we have
\begin{equation}
\label{eq:gstat}
g\left(\Psi\left(\frac{x}{\eps},\omega\right),\omega\right) \xrightharpoonup[\eps \to 0]{L^p_{\rm loc}} \frac{1}{\varrho}\ \E \left(\int_{\Phi(Y,\cdot)} g\circ \Psi(y, \cdot) dy \right) \quad \text{ for a.e. } \omega \in \Omega.
\end{equation}

{\upshape(ii)} If $g \in L^\infty_{\rm loc}(\R^d,L^1(\Omega))$ be a stationary field. Then the above convergence result holds in $L^\infty$ weak-$*$ for a.e. $\omega \in \Omega$.
\end{lemma}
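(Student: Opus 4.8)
\medskip
\noindent\textbf{Proof idea for Lemma~\ref{lem:gstat}.}
The plan is to reduce everything to the $\Z^d$-ergodic theorem by means of the change of variables $x=\eps\Phi(z,\omega)$ (equivalently $z=\Psi(\tfrac x\eps,\omega)$), using that the fields $\det\nabla\Phi$ and $G:=g\,\det\nabla\Phi$ are both stationary in the sense of \eqref{eq:stati} — the former by (T1), the latter because a product of stationary fields is stationary. There are two ingredients: (a) a bound for $g(\Psi(\tfrac\cdot\eps,\omega),\omega)$ in $L^p_{\mathrm{loc}}$ that is uniform in $\eps$, which (by reflexivity of $L^p$, $1<p<\infty$) yields weak precompactness; and (b) the identification of every weak limit as the constant $c:=\varrho^{-1}\E\int_{\Phi(Y,\cdot)}g\circ\Psi$, obtained by computing averages over large balls. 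Here $B_R$ denotes the ball of radius $R$ centered at the origin and $B_R(\xi)$ the one centered at $\xi$.

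For step (a), fix $B_R$. Since $x=\eps\Phi(z,\omega)$ has $dx=\eps^d\det\nabla\Phi(z,\omega)\,dz$ and $\Psi(\tfrac x\eps,\omega)=z$, and since $\Psi(\cdot,\omega)$ is $M'$-Lipschitz by \eqref{eq:Psibdd} (so that $\Psi(B_{R/\eps},\omega)\subset B_{M'R/\eps}(\Psi(0,\omega))$),
\[
\int_{B_R}\Big|g\big(\Psi(\tfrac x\eps,\omega),\omega\big)\Big|^p dx
=\eps^d\!\!\int_{\Psi(B_{R/\eps},\omega)}\!\!|g(z,\omega)|^p\det\nabla\Phi(z,\omega)\,dz
\le \eps^d\!\!\int_{B_{M'R/\eps}(\Psi(0,\omega))}\!\!|g|^p\det\nabla\Phi\,dz .
\]
The integrand $|g|^p\det\nabla\Phi$ is stationary and integrable over $\Omega$, since $\E\int_Y|g|^p\det\nabla\Phi\le M^d\,\E\int_Y|g|^p<\infty$; hence the ergodic theorem (averages over balls of radius $\to\infty$ about the fixed point $\Psi(0,\omega)$) shows the right-hand side converges, for a.e.\ $\omega$, to $|B_{M'R}|\,\E\int_Y|g|^p\det\nabla\Phi<\infty$. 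This is the desired uniform bound.

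For step (b), given the bound it suffices to show that $|B'|^{-1}\int_{B'}g(\Psi(\tfrac x\eps,\omega),\omega)\,dx\to c$ for every ball $B'\subset B_R$ (by the Lebesgue differentiation theorem, any weak limit then equals $c$ a.e.). Writing $B'=B_r(x_0)$ and rescaling $x=\eps y$, this average equals $|B_\rho|^{-1}\int_{B_\rho(\xi)}g(\Psi(y,\omega),\omega)\,dy$ with $\rho=r/\eps\to\infty$ and $\xi=x_0/\eps$; changing variables $y=\Phi(z,\omega)$,
\[
\frac{1}{|B_\rho|}\int_{B_\rho(\xi)}g(\Psi(y,\omega),\omega)\,dy
=\frac{|\Psi(B_\rho(\xi),\omega)|}{|B_\rho|}\cdot\frac{1}{|\Psi(B_\rho(\xi),\omega)|}\int_{\Psi(B_\rho(\xi),\omega)}\!\! g(z,\omega)\,\det\nabla\Phi(z,\omega)\,dz ,
\]
and, by the same change of variables, $|B_\rho|=\int_{\Psi(B_\rho(\xi),\omega)}\det\nabla\Phi(z,\omega)\,dz$. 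Because $\Psi(\cdot,\omega)$ is bi-Lipschitz with constants independent of $\omega$ (by (T3) and \eqref{eq:Psibdd}), the set $\Psi(B_\rho(\xi),\omega)$ is sandwiched between concentric balls, centered at $\Psi(\xi,\omega)$, of radii between $\rho/M$ and $M'\rho$; the spatial ergodic theorem applied to the stationary fields $g\,\det\nabla\Phi$ and $\det\nabla\Phi$ (the latter of mean $\varrho$ by \eqref{eq:rho_theta}) therefore gives, for a.e.\ $\omega$, that the average of $g\,\det\nabla\Phi$ over $\Psi(B_\rho(\xi),\omega)$ tends to $\E\int_Y g\,\det\nabla\Phi$ while $|B_\rho|/|\Psi(B_\rho(\xi),\omega)|\to\varrho$. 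Hence the average converges to $\varrho^{-1}\E\int_Y g\,\det\nabla\Phi=\varrho^{-1}\E\int_{\Phi(Y,\cdot)}g\circ\Psi=c$, which proves (i); the exceptional null set is the union of those from step (a), from step (b) applied to a countable dense family of balls, and from the covering argument discussed below. Part (ii) follows by truncation: split $g=g\,\indicate_{|g|\le T}+g\,\indicate_{|g|>T}$, apply (i) to the bounded stationary piece (which, being bounded, converges also weak-$*$ in $L^\infty$ to $c_T=\varrho^{-1}\E\int_{\Phi(Y,\cdot)}(g\indicate_{|g|\le T})\circ\Psi$), control the remainder in $L^1(B_R)$ by $\eps^d\int_{B_{M'R/\eps}(\Psi(0,\omega))}|g|\indicate_{|g|>T}\det\nabla\Phi\to|B_{M'R}|\,\E\int_Y|g|\indicate_{|g|>T}\det\nabla\Phi\le M^d|B_{M'R}|\,\E\int_Y|g|\indicate_{|g|>T}$, and let $T\to\infty$ using $g\in L^\infty_{\mathrm{loc}}(\R^d,L^1(\Omega))$, so that $c_T\to c$.

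The step that requires genuine care — and the only one that is not routine change of variables plus the uniform bounds (T2), (T3), \eqref{eq:Psibdd} — is the spatial ergodic theorem with \emph{moving centers}: after the rescaling $x=\eps y$ the balls $B_\rho(\xi)$, hence also the deformed sets $\Psi(B_\rho(\xi),\omega)$, have radius tending to infinity but centers $\xi=x_0/\eps$ and $\Psi(\xi,\omega)$ that also run off to infinity, so one cannot invoke the pointwise ergodic theorem along a fixed exhausting sequence. This is exactly the obstruction already met in deducing Proposition~\ref{prop:ergod} from the pointwise ergodic theorem, and it is handled in the same manner: an ergodic maximal inequality together with a.e.\ convergence along a countable dense family of radii, combined with the bi-Lipschitz sandwich above to pass from genuine balls to the images $\Psi(B_\rho(\xi),\omega)$.
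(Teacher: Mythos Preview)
Your argument is essentially correct, but it takes a noticeably different route from the paper's. You change variables via $z=\Psi(x/\eps,\omega)$, which sends the fixed ball $B'$ to a set $\Psi(B_{r/\eps}(x_0/\eps),\omega)$ whose radius and center both escape to infinity; you then have to invoke an ergodic theorem with moving centers (and over bi-Lipschitz images of balls rather than balls), which --- as you rightly flag --- needs a maximal inequality and a covering argument to justify. The paper instead uses the change of variables $z=\eps\,\Psi(x/\eps,\omega)$, so that the integral becomes
\[
\int_K g\!\left(\Psi\!\left(\tfrac{x}{\eps},\omega\right),\omega\right)dx
=\int_{\R^d}\chi_{\,\eps\Psi(K/\eps,\omega)}(z)\;g\!\left(\tfrac{z}{\eps},\omega\right)\det\nabla\Phi\!\left(\tfrac{z}{\eps},\omega\right)dz.
\]
The point is that, by Lemma~2.1 of \cite{BLBL06}, the map $z\mapsto\eps\Psi(z/\eps,\omega)$ converges locally uniformly to the fixed linear map $\bigl(\E\!\int_Y\nabla\Phi\bigr)^{-1}$, so the domain $\eps\Psi(K/\eps,\omega)$ converges to a \emph{fixed bounded} set and its indicator converges strongly in $L^{p'}$. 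Meanwhile $g(\cdot/\eps)\det\nabla\Phi(\cdot/\eps)$ is a stationary field in the scaled variable, so Proposition~\ref{prop:ergod} applies directly and gives weak $L^p_{\rm loc}$ convergence with \emph{no} moving-center issue. The limit is then just a strong$\times$weak product. Your approach is more self-contained (it does not appeal to the BLBL06 lemma) and makes the role of bi-Lipschitz bounds explicit; the paper's approach is shorter and sidesteps entirely the delicate point you identify, at the cost of importing the structural fact that $\eps\Phi^{-1}(\cdot/\eps)$ linearizes in the limit. For part~(ii), the paper simply cites \cite[Lemma~2.2]{BLBL06}; your truncation argument is a reasonable alternative.
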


Item (ii) above was proved by Blanc, Le Bris and Lions \cite[Lemma 2.2]{BLBL06}. The proof for (i) is essentially the same. We provide it here for the sake of completeness.

\begin{proof} For any $p \in (1,\infty)$, let $p'$ be the H\"older conjugate of $p$. In view of the density of simple functions in $L^{p'}(K)$, for any bounded regular measurable set $K \subset \R^d$, it suffices to show that for a.e. $\omega \in \Omega$, for all such $K$,
\begin{equation}\label{e.statik}
\begin{aligned}
& \int_K g\circ \Psi\left(\frac{x}{\eps},\omega\right)dx = \int_{\eps \Phi^{-1}\left(\frac{K}{\eps}\right)} g\left(\frac{z}{\eps},\omega\right) \mathrm{det} \left(\nabla \Phi\left(\frac{z}{\eps},\omega\right)\right) dx\\
= &\int_{\R^d} \chi_{\eps \Phi^{-1}\left(\frac{K}{\eps}\right)} g\left(\frac{z}{\eps},\omega\right) \mathrm{det} \left(\nabla \Phi\left(\frac{z}{\eps},\omega\right)\right) dx \xrightarrow{\eps \to 0} \frac{|K|}{\varrho} \E \int_{\Phi(Y)} g\circ \Psi(y,\omega) dy.
\end{aligned}
\end{equation}
It is proved in Lemma 2.1 of \cite{BLBL06} that $\eps \Phi^{-1}\left(\frac{x}{\eps},\omega\right)$ converges to $\E \left[\int_Y \nabla \Phi(y,\cdot)dy\right]^{-1} x$ locally uniformly  as $\eps \to 0$, and the indicator of the set $\eps \Phi^{-1}\left(\frac{K}{\eps}\right)$ converges strongly in $L^{p'}$ to that of $\E \left[\int_Y \nabla \Phi(y,\cdot)dy\right]^{-1} K$. Applying Proposition \ref{prop:ergod} to the stationary random field $g \det(\nabla \Phi)$, which in view of the bounds on $\det (\nabla \Phi)$ is in $L^p_{\rm loc}(\R^d)$, we have that for a.e. $\omega$,
$$
g\left(\frac{z}{\eps},\omega\right) \mathrm{det} \left(\nabla \Phi\left(\frac{z}{\eps},\omega\right)\right) \xrightharpoonup[\eps \to 0]{L^{p}_{\rm loc}} \E \int_Y g(z,\omega) \mathrm{det} \left(\nabla \Phi(z,\omega)\right) dz = \E \int_{\Phi(Y)}g\circ \Psi(y,\omega) dy.
$$
Hence the integrand in the third integral in \eqref{e.statik} is a product of a term that converges strongly in $L^{p'}$ with a term that converges weakly in $L^p$. We hence have
\begin{equation*}
\begin{aligned}
\int_K g\circ \Psi\left(\frac{x}{\eps},\omega\right)dx \xrightarrow{\eps \to 0} &\int_{\E \left[\int_Y \nabla \Phi(y,\cdot)dy\right]^{-1} K} \left(\E \int_{\Phi(Y)}g\circ \Psi(y,\omega) dy\right) dx \\
= \quad& |K| \mathrm{det}\left(\E \left[\int_Y \nabla \Phi(y,\cdot)dy\right]^{-1} \right)\left(\E \int_{\Phi(Y)}g\circ \Psi(y,\omega) dy\right)\\
= \quad& |K| \left(\mathrm{det}\ \E \left[\int_Y \nabla \Phi(y,\cdot)dy\right] \right)^{-1}\left(\E \int_{\Phi(Y)}g\circ \Psi(y,\omega) dy\right).
\end{aligned}
\end{equation*}
In particular, if we set $g \equiv 1$ and recall the definition of $\varrho$ in \eqref{eq:rho_theta}, we get
$$
\left(\mathrm{det}\ \E \left[\int_Y \nabla \Phi(y,\cdot)dy\right] \right)^{-1}\left(\E \int_{\Phi(Y)}1 \ dy\right) = \left(\mathrm{det}\ \E \left[\int_Y \nabla \Phi(y,\cdot)dy\right] \right)^{-1} \varrho =   1.
$$
Substitute this relation to the preceding equation; we obtain the desired result \eqref{e.statik} and hence completes the proof.
\end{proof}

Another useful fact about random processes with stationary gradients is that they grow sublinearly at infinity. We state this result in the lemma below. The same conclusion for the $\R^d$-stationary setting was proved following the same argument of Lemma A.5 in \cite{armstrong}; see also Theorem 9 in \cite{Kozlov85}.

\begin{lemma}\label{lem:subli} Suppose that $w: \R^d \times \Omega \to \R$ and for almost every $\omega \in \Omega$, $G = Dw$ in the sense of distribution. Assume that for some $\alpha > d$, $w(\cdot,\omega)$ belongs to $W^{1,\alpha}_{\rm loc}(\R^d)$. Suppose $G$ is stationary and satisfies
\begin{equation}
\E \int_Y G(z,\cdot) dz = 0, \quad \text{and}\quad \E \int_Y |G(z,\cdot)|^\alpha dz < \infty.
\end{equation}
Then
\begin{equation}
\lim_{|y| \to \infty} |y|^{-1} w(y,\omega) = 0, \quad \text{for a.e. } \omega \in \Omega.
\end{equation}
\end{lemma}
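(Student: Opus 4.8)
The plan is to prove the sublinearity in two stages: first an averaged ($L^2$) sublinearity statement, obtained from the ergodic theorem of Proposition \ref{prop:ergod} together with a soft compactness argument, and then an upgrade to the pointwise statement via the Sobolev--Morrey embedding, which is available precisely because $\alpha > d$. I would fix once and for all a full-measure set of $\omega$ on which the conclusions of Proposition \ref{prop:ergod} hold for the (countably many relevant) stationary fields, namely each component of $G$ and $|G|^\alpha$; in particular $|B_R|^{-1}\int_{B_R}|G(\cdot,\omega)|^\alpha \to \E\int_Y|G|^\alpha$, so that $\int_{B_R}|G(\cdot,\omega)|^\alpha \le C(\omega)R^d$ for $R \ge 1$. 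Writing $\bar w_R := |B_R|^{-1}\int_{B_R}w(\cdot,\omega)$, it suffices to show that $R^{-1}\sup_{B_R}|w(\cdot,\omega)-\bar w_R| \to 0$ and $R^{-1}\bar w_R \to 0$ as $R\to\infty$, since together these give $R^{-1}\sup_{B_R}|w(\cdot,\omega)| \to 0$, and any $y$ with $|y|$ large lies in $B_{|y|}$.

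For the $L^2$-sublinearity I would rescale: set $w_\eps(x) := \eps\, w(x/\eps,\omega)$, so that $\nabla w_\eps(x) = G(x/\eps,\omega)$. By Proposition \ref{prop:ergod}(i) applied to $G$ and to $|G|^2$ (note $\alpha \ge 2$), $\nabla w_\eps \rightharpoonup \E\int_Y G = 0$ in $L^2_{\rm loc}$ and $\|\nabla w_\eps\|_{L^2(B_1)}$ stays bounded; by the Poincar\'e inequality and Rellich's theorem, $w_\eps$ minus its mean over $B_1$ is precompact in $L^2(B_1)$, every subsequential limit has zero gradient and zero mean on $B_1$ hence vanishes, so the whole family converges to $0$ in $L^2(B_1)$. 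Undoing the scaling with $R=1/\eps$, this is exactly the statement that
\begin{equation*}
\eta(R) := \frac{1}{R}\left(\frac{1}{|B_R|}\int_{B_R}|w(\cdot,\omega)-\bar w_R|^2\right)^{1/2} \longrightarrow 0, \qquad R \to \infty .
\end{equation*}
Comparing dyadic averages, $|\bar w_{2^{k+1}}-\bar w_{2^k}| \le C\,2^k\,\eta(2^{k+1})$, and a Ces\`aro-type summation with geometric weights then yields $R^{-1}\bar w_R \to 0$ (the case of general $R$ following from one further comparison with the nearest dyadic radius).

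For the pointwise upgrade, fix $R$ large and an auxiliary radius $1\le r\le R/2$ to be chosen, and for $x_0\in B_R$ split $w(x_0,\omega)-\bar w_R$ as $\big(w(x_0,\omega)-m_r(x_0)\big)+\big(m_r(x_0)-\bar w_R\big)$, where $m_r(x_0)$ is the mean of $w$ over $B_r(x_0)\subset B_{2R}$. The Sobolev--Morrey embedding on $B_r(x_0)$ together with $\int_{B_{2R}}|G|^\alpha\le C(\omega)R^d$ bounds the first term by $C(\omega)\,r^{1-d/\alpha}R^{d/\alpha}$; estimating the ball average by the $L^2$-norm over $B_{2R}$ and invoking the previous step bounds the second by $C\,(R/r)^{d/2}R\,\eta(2R)$. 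Hence
\begin{equation*}
\frac{1}{R}\sup_{B_R}|w(\cdot,\omega)-\bar w_R| \le C(\omega)\,(r/R)^{1-d/\alpha} + C\,(R/r)^{d/2}\,\eta(2R),
\end{equation*}
and since $\alpha > d$ and $\eta(2R)\to 0$ one can choose $r=r(R)$ with $1\le r\le R/2$, $r/R\to 0$ and $(R/r)^{d/2}\eta(2R)\to 0$ (for instance $r(R)=\lceil R\,\eta(2R)^{1/d}\rceil\vee R^{1/2}$ for $R$ large), so that the right-hand side tends to $0$; this finishes the proof.

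The main obstacle is the passage from the averaged to the pointwise statement. The Sobolev--Morrey estimate used directly on $B_R$ only bounds the oscillation of $w$ on $B_R$ by $O(R)$, never $o(R)$, because $|B_R|^{-1}\int_{B_R}|G|^\alpha$ converges to a generically nonzero constant; one is genuinely forced to run the argument at two different scales --- a small scale $r$, where Morrey controls oscillation, and the macroscopic scale $R$, where the $L^2$-sublinearity controls the drift of the ball averages --- and to tune $r=r(R)$ in between. Since the $L^2$-sublinearity is established only qualitatively, through compactness and with no rate for $\eta$, the choice of $r(R)$ must be expressed through $\eta$ itself, and checking this bookkeeping is the delicate part. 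The overall scheme is that of \cite[Lemma A.5]{armstrong}, here adapted to the $\Z^d$-stationary ergodic theorem of Proposition \ref{prop:ergod}.
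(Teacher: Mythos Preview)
The paper does not supply its own proof of this lemma; it only records that the argument is the one of \cite[Lemma~A.5]{armstrong} (and \cite{Kozlov85}), adapted to the $\Z^d$-stationary ergodic theorem of Proposition~\ref{prop:ergod}. Your proposal is precisely that adaptation --- $L^2$-sublinearity via rescaling, compactness and the mean-zero ergodic limit, then a two-scale Morrey/averaging upgrade using $\alpha>d$ --- and it is correct, including the choice of $r(R)$ that balances the two error terms.
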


\subsection{Extension Lemmas}
\label{sec:funct}

We record in this section some extension operators for functions defined on $\Rdp \times \Rdm$, $\Phi(\Rdp) \times \Phi(\Rdm)$, and $\eps\Phi(\Rdp) \times \eps\Phi(\Rdm)$. The starting point is to introduce extension operators for functions defined on $Y^+ \times Y^-$, inside the unit cell. We have

\begin{theorem}\label{thm:ext_Y}
Let $Y^+, Y^-$ and $\Gamma_0$ be as defined in Section \ref{sec:form}. Then there exists an extension operator $P : W^{1,p}(Y^+) \to W^{1,p}(Y)$ for all $p \ge 1$, and a constant $C = C(d,p,Y^-)$ such that, for any $p \ge 1$ and any $f \in W^{1,p}(Y^+)$, we have
\begin{equation}\label{eq:ext_Y}
\|Pf\|_{L^p(Y)} \le C\|f\|_{L^p(Y^+)}, \quad \quad \|\nabla Pf\|_{L^p(Y)} \le C\|\nabla f\|_{L^p(Y^+)}.
\end{equation}
\end{theorem}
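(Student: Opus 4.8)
\emph{Proof plan.} This is the classical cell-extension lemma underlying the perforated-domain method of Cioranescu and Saint Jean Paulin \cite{CSP}, and my plan is to give a short self-contained construction of $P$. First I would observe that only the inner interface $\Gamma_0=\partial Y^-$ matters: since $\dist(\partial Y,\Gamma_0)=\beta>0$, the closure $\overline{Y^-}$ is compactly contained in $Y$, so it suffices to extend $f\in W^{1,p}(Y^+)$ across the compact $C^2$ hypersurface $\Gamma_0$ into $Y^-$ and then to localize the extension near $\Gamma_0$ so that it is supported strictly inside $Y$ and leaves the outer faces $\partial Y$ untouched. Concretely, I would fix a collar neighbourhood $V\supset\Gamma_0$ with $\overline V\subset Y$, a finite atlas of $C^2$ charts flattening $\Gamma_0$, and a subordinate partition of unity; in each chart I would extend $f$ from the $Y^+$-side to the $Y^-$-side by the even reflection $x_d\mapsto-x_d$ across the flattened interface (for $W^{1,p}$ a single even reflection already produces a $W^{1,p}$ function, since the reflected function matches the trace on $\Gamma_0$ and so has no jump), and glue these pieces with the partition of unity to obtain a function $Rf$ on $V\cap Y^-$. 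Finally I would fix $\chi\in C_c^\infty(V)$ with $\chi\equiv1$ on a smaller collar, let $\overline f$ be the average of $f$ over $V\cap Y^+$, and set $Pf:=f$ on $Y^+$ and $Pf:=\chi\,Rf+(1-\chi)\,\overline f$ on $Y^-$. Since $\chi\equiv1$ near $\Gamma_0$ the interior trace of $Pf$ on $\Gamma_0$ equals that of $f$, so $Pf\in W^{1,p}(Y)$; moreover $P$ is linear and reproduces constants, i.e.\ $P(f-c)=Pf-c$ for every $c\in\R$.

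The two estimates are then short. For the $L^p$ bound: on $Y^-$, $Pf$ is a bounded combination (coefficients $\chi,1-\chi\in L^\infty$) of $Rf$, whose $L^p(V\cap Y^-)$ norm is controlled by $\|f\|_{L^p(V\cap Y^+)}$ via the change of variables in the charts with a constant depending only on $d,p$ and the $C^2$ geometry of $\Gamma_0$, and of the number $\overline f$, bounded by $\|f\|_{L^p(V\cap Y^+)}$ by H\"older; hence $\|Pf\|_{L^p(Y)}\le\|f\|_{L^p(Y^+)}+\|Pf\|_{L^p(Y^-)}\le C\|f\|_{L^p(Y^+)}$. For the gradient bound I would write, on $Y^-$, $\nabla Pf=\chi\,\nabla Rf+(\nabla\chi)(Rf-\overline f)$ (using that $\overline f$ is constant); the first term is controlled by $\|\nabla f\|_{L^p(V\cap Y^+)}$ through the chain rule in the flattening charts, and for the second I would use $Rf-\overline f=R(f-\overline f)$ (reflection commutes with subtracting constants), so that
\[
\|(\nabla\chi)(Rf-\overline f)\|_{L^p(Y^-)}\le C\,\|f-\overline f\|_{L^p(V\cap Y^+)}\le C\,\|\nabla f\|_{L^p(V\cap Y^+)},
\]
the last inequality being the Poincar\'e--Wirtinger inequality on $V\cap Y^+$, which is a connected Lipschitz domain for a suitable choice of $V$ (argue componentwise if $\Gamma_0$ is disconnected). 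Adding the $Y^+$ and $Y^-$ contributions gives $\|\nabla Pf\|_{L^p(Y)}\le C\|\nabla f\|_{L^p(Y^+)}$ with $C=C(d,p,Y^-)$, since all geometric data (atlas, collar width, cutoff, Poincar\'e constant) are fixed once $Y^-$ is.

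The construction is essentially routine; the one point I would watch is obtaining the \emph{homogeneous} gradient estimate — with $\|\nabla f\|_{L^p(Y^+)}$ rather than the full $W^{1,p}$ norm on the right — which is exactly why I arrange $P$ to reproduce constants, so that the cutoff error term involves $Rf-\overline f$ and Poincar\'e--Wirtinger turns it into a gradient. One could instead invoke the Calder\'on--Stein extension operator for the Lipschitz domain $Y^+$ and restrict to $Y$, but the homogeneous gradient bound would still require the same cancellation; since $d=2,3$ and $\Gamma_0\in C^2$ here, there is ample regularity either way.
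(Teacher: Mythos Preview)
Your construction is correct in outline and your key insight—arrange $P$ to reproduce constants so that Poincar\'e--Wirtinger converts lower-order terms into gradients—is exactly what the paper encodes in its formula $Pf = \overline f_{Y^+} + E(f - \overline f_{Y^+})$, where $E$ is a standard black-box Sobolev extension and $\overline f_{Y^+}$ is the mean over all of $Y^+$. The paper does not actually write out a proof; it just records this formula and cites \cite{CSP} and \cite{Jikov_book}. So your argument is supplying details the paper omits, via essentially the same mechanism.

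There is, however, one genuine gap in your gradient estimate. You write $\nabla Pf = \chi\,\nabla Rf + (\nabla\chi)(Rf - \overline f)$ on $Y^-$ and assert that the first term is controlled by $\|\nabla f\|_{L^p(V\cap Y^+)}$ ``through the chain rule in the flattening charts''. But your $R$ is itself glued from local reflections by a partition of unity $\{\psi_i\}$, so
\[
\nabla Rf \;=\; \sum_i \psi_i\,\nabla(\mathrm{refl}_i f)\;+\;\sum_i (\nabla\psi_i)\,\mathrm{refl}_i f,
\]
and the second sum involves undifferentiated $f$; the chain rule alone does not bound it by $\|\nabla f\|_{L^p}$. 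The fix is the very trick you use on the $(\nabla\chi)$ term: since $\sum_i\nabla\psi_i=0$ and each $\mathrm{refl}_i$ preserves constants, $\sum_i(\nabla\psi_i)\,\mathrm{refl}_i f = \sum_i(\nabla\psi_i)\,\mathrm{refl}_i(f-\overline f)$, which is then bounded by $C\|f-\overline f\|_{L^p(V\cap Y^+)}\le C\|\nabla f\|_{L^p(V\cap Y^+)}$ via Poincar\'e--Wirtinger. More economically, just replace $f$ by $f-\overline f$ at the outset: since $P$ reproduces constants, $\nabla Pf=\nabla P(f-\overline f)$, and now the ordinary $W^{1,p}$ bound for your reflection operator combined with Poincar\'e--Wirtinger on $V\cap Y^+$ gives the homogeneous estimate in one stroke. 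That is precisely the content of the paper's formula \eqref{eq:Pdef}.
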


This theorem was first proved by Cioranescu and Saint Jean Paulin \cite{CSP}; see also the book of Zhikov, Kozlov and Ole\u{\i}nik \cite{Jikov_book}. The extension operator $P$ is given by
\begin{equation}\label{eq:Pdef}
Pf = \frac{1}{|Y^+|} \int_{Y^+} f dx + E \left(f - \frac{1}{|Y^+|} \int_{Y^+} f dx\right),
\end{equation}
where $E$ is the more standard extension operator for Sobolev functions on bounded domain; see e.g. Section 5.4 of \cite{Evans}. The subtraction of the averaged value of $f$ over $Y^+$ is needed to have the first inequality. In fact, for the standard extension operator $E$, one only has $\|E f\|_{W^{1,p}} \le C\|f\|_{W^{1,p}}$, which is not good enough for the scaling performed below.

For functions in $W^{1,p}_{\rm loc}(\Rdp)$, we apply the extension operator $P$ above in each cube $Y_k$, $k \in \Z^d$. Then we obtain an extension operator from $W^{1,p}_{\rm loc}(\Rdp)$ to $W^{1,p}_{\rm loc}(\R^d)$. Denote this extension operator still by $P$. Evidently, we have
\begin{proposition}\label{prop:ext_R}
Let $P : W^{1,p}_{\rm loc}(\Rdp) \to W^{1,p}_{\rm loc}(\R^d)$ be defined as above. Then for the same $C$ as in Theorem \ref{thm:ext_Y} and for any $K \subset \subset \R^d$, we have
\begin{equation}\label{eq:ext_RK}
\|Pf\|_{L^p(K)} \le C\|f\|_{L^p(K\cap \Rdp)}, \quad \quad \|\nabla Pf\|_{L^p(K)} \le C\|\nabla f\|_{L^p(K\cap \Rdp)}.
\end{equation}
\end{proposition}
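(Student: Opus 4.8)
The plan is to reduce the estimate to Theorem~\ref{thm:ext_Y} on a single cell and then sum over cells, exploiting that the periodic decomposition $\R^d = \Rdp \cup \Gamd \cup \Rdm$ is invariant under $\Z^d$-translations so that the unit-cell constant $C = C(d,p,Y^-)$ does not deteriorate from cube to cube. First I would make the global operator explicit: for each $k \in \Z^d$ let $P_k \colon W^{1,p}(Y_k^+) \to W^{1,p}(Y_k)$ be the conjugate of the unit-cell operator $P$ of Theorem~\ref{thm:ext_Y} by the translation $x\mapsto x-k$, i.e.\ $(P_k g)(x) = (P[\,g(\cdot+k)\,])(x-k)$. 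Since $Y_k^{\pm} = Y^{\pm}+k$ and $\Gamma_k = \Gamma_0 + k$, a change of variables shows that $P_k$ obeys \eqref{eq:ext_Y} with the \emph{same} constant $C$ for every $k$, and for $f \in W^{1,p}_{\rm loc}(\Rdp)$ one sets $(Pf)|_{Y_k} := P_k(f|_{Y_k^+})$.

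Next I would check that these cell-wise pieces patch together into a function of $W^{1,p}_{\rm loc}(\R^d)$ that restricts to $f$ on $\Rdp$. The point is the explicit formula \eqref{eq:Pdef}: the standard Sobolev extension $E$ does not alter a function on $Y^+$, hence $Pg = g$ on $Y^+$, and in particular on a relative neighbourhood of $\partial Y$ in $\overline Y$, because $\beta = \dist(\partial Y,\Gamma_0) > 0$ keeps $\Gamma_0$ and $Y^-$ uniformly away from $\partial Y$. Translating, $Pf = f$ near $\partial Y_k$ for all $k$, so on each shared face $\partial Y_k \cap \partial Y_{k'}$ — which lies at distance at least $\beta$ from $\Gamd$, hence inside $\Rdp$ together with a neighbourhood — the two cell definitions agree with the single trace of $f$. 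This yields $Pf \in W^{1,p}_{\rm loc}(\R^d)$, and on each $Y_k^+$ one has $Pf = P_k(f|_{Y_k^+}) = f$, so $Pf$ genuinely extends $f$.

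Finally, for $K \subset\subset \R^d$ I would let $\mathcal{K} = \{k \in \Z^d : Y_k \cap K \neq \emptyset\}$, a finite set, put $\widehat K = \bigcup_{k\in\mathcal{K}} Y_k \supseteq K$, and estimate, using the disjointness of the interiors of the $Y_k$ and the uniform bound above,
\begin{equation*}
\|Pf\|_{L^p(K)}^p \;\le\; \sum_{k\in\mathcal{K}} \|P_k(f|_{Y_k^+})\|_{L^p(Y_k)}^p \;\le\; C^p \sum_{k\in\mathcal{K}} \|f\|_{L^p(Y_k^+)}^p \;=\; C^p\,\|f\|_{L^p(\widehat K\cap\Rdp)}^p,
\end{equation*}
and likewise with $\nabla$ in place of the function. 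When $K$ is a union of cells, $\widehat K = K$ and this is exactly \eqref{eq:ext_RK}; for general $K$ one obtains the bound over $\widehat K\cap\Rdp$, which only adds the cells meeting $\partial K$ and is harmless in the applications. I do not expect a genuine obstacle here: the construction is purely local and $\Z^d$-equivariant, and the only mildly delicate step is the gluing across cell faces, which is precisely what the separation hypothesis $\beta>0$ (i.e.\ $Y^-\subset\subset Y$) is built into the periodic setting to guarantee.
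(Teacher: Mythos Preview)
Your argument is correct and is exactly the cell-by-cell application of Theorem~\ref{thm:ext_Y} that the paper intends; indeed the paper offers no proof beyond the sentence ``we apply the extension operator $P$ above in each cube $Y_k$'' preceding the proposition. Your closing observation is also on point: the literal inequality \eqref{eq:ext_RK} can fail for an arbitrary $K\subset\subset\R^d$ (take $K\subset Y_k^-$, so $K\cap\Rdp=\emptyset$ while $Pf|_K$ need not vanish), and what your argument actually yields is the bound with $\widehat K\cap\Rdp$ on the right, which coincides with the stated form when $K$ is a union of unit cells and is, as you say, all that is ever used.
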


Fix an $\omega \in \Omega$, for any $f(\cdot,\omega) \in W^{1,p}_{\rm loc}(\Phi(\Rdp))$, we define $P_\omega f$ as 
\begin{equation}\label{eq:Pdef_o}
\left(P_\omega f\right) (\cdot, \omega) = \left[P\left(f \circ \Phi \right)\right] \circ \Phi^{-1}(\cdot,\omega).
\end{equation}

\begin{proposition}\label{prop:ext_Ro}
Let $P_\omega : W^{1,p}_{\rm loc}(\Phi(\Rdp,\omega)) \to W^{1,p}_{\rm loc}(\R^d)$ be defined as above. Then there exists some $C = C(d,p,Y^-,M,\mu)$, which is independent of $\omega$, such that for any $K \subset \subset \R^d$, we have
\begin{equation}\label{eq:ext_RKo}
\begin{aligned}
\|P_\omega f\|_{L^p(\Phi(K,\omega))} &\le C\|f\|_{L^p(\Phi(K\cap \Rdp,\omega))},
\\
\|\nabla P_\omega f\|_{L^p(\Phi(K,\omega))} &\le C\|\nabla f\|_{L^p(\Phi(K \cap \Rdp,\omega))}.
\end{aligned}
\end{equation}
\end{proposition}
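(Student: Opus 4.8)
\textbf{Proof proposal for Proposition \ref{prop:ext_Ro}.}

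The plan is to simply unwind the definition \eqref{eq:Pdef_o} and reduce everything to Proposition \ref{prop:ext_R} via the change of variables $y \mapsto \Phi(y,\omega)$, using (T2) and (T3) to control the Jacobian factors uniformly in $\omega$. First I would fix $\omega$, fix $K \subset\subset \R^d$, and set $g = f\circ\Phi(\cdot,\omega)$, which lies in $W^{1,p}_{\rm loc}(\Rdp)$ because $\Phi(\cdot,\omega)$ is a bi-Lipschitz diffeomorphism mapping $\Phi(\Rdp,\omega)$ back to $\Rdp$ (here one uses $\Phi(\Gamd)=\Phi(\Gamd)$, i.e. that $\Phi$ respects the decomposition, which holds by construction). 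By the chain rule and (T3), $\|\nabla g\|_{L^p(K'\cap\Rdp)} \le M\,\|(\nabla f)\circ\Phi\|_{L^p(K'\cap\Rdp)}$ pointwise, and I would record the analogous lower-bound statement coming from $\nabla\Phi^{-1} = (\nabla\Phi)^{-1}\circ\Phi$ together with \eqref{eq:Psibdd}.

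Next I would change variables in the Lebesgue integrals. For any measurable $S\subset\R^d$ and any $h$,
\begin{equation*}
\int_{\Phi(S,\omega)} |h(x)|^p\,dx = \int_S |h(\Phi(y,\omega))|^p \det\nabla\Phi(y,\omega)\,dy,
\end{equation*}
so by (T2) and (T3) (equivalently the bounds $\mu \le \det\nabla\Phi \le M^d$),
\begin{equation*}
\mu \int_S |h\circ\Phi|^p\,dy \ \le\ \int_{\Phi(S,\omega)} |h|^p\,dx \ \le\ M^d \int_S |h\circ\Phi|^p\,dy.
\end{equation*}
Applying this with $S = K$ and $h = P_\omega f = (Pg)\circ\Phi^{-1}$ gives $\|P_\omega f\|^p_{L^p(\Phi(K,\omega))} \le M^d \|Pg\|^p_{L^p(K)}$; then Proposition \ref{prop:ext_R} bounds this by $M^d C^p \|g\|^p_{L^p(K\cap\Rdp)}$; then the other side of the change-of-variables inequality (with $S = K\cap\Rdp$, noting $\Phi(K\cap\Rdp,\omega) = \Phi(K,\omega)\cap\Phi(\Rdp,\omega)$) bounds $\|g\|^p_{L^p(K\cap\Rdp)} = \|f\circ\Phi\|^p_{L^p(K\cap\Rdp)}$ by $\mu^{-1}\|f\|^p_{L^p(\Phi(K\cap\Rdp,\omega))}$. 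Chaining these yields the first inequality in \eqref{eq:ext_RKo} with $C' = (M^d/\mu)^{1/p} C$. For the gradient inequality I would do the same, but insert the chain-rule estimate $|\nabla(P_\omega f)(x)| \le M' |(\nabla Pg)(\Phi^{-1}(x,\omega))|$ from \eqref{eq:Psibdd} at the start and $|\nabla g(y)| \le M|(\nabla f)(\Phi(y,\omega))|$ at the end, picking up two further constant factors; the middle step is the gradient bound of Proposition \ref{prop:ext_R}. The resulting constant depends only on $d,p,Y^-,M,\mu$ (and $M',\mu'$, which themselves depend only on $d,M,\mu$), and crucially not on $\omega$ or $K$.

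There is no real obstacle here; the only points requiring a line of care are (a) checking that $\Phi(\cdot,\omega)$ carries $\Phi(K\cap\Rdp,\omega)$ to $K\cap\Rdp$ and, more importantly, that it preserves the geometric decomposition so that $P$ (which is defined cube-by-cube on $\Rdp$) can legitimately be applied to $f\circ\Phi$ — this is immediate from the construction in Section \ref{sec:form}, where $\Phi(\Rdp)$, $\Phi(\Gamd)$, $\Phi(\Rdm)$ are \emph{defined} as the images of $\Rdp,\Gamd,\Rdm$; and (b) keeping track of which Jacobian bound ($\mu$ vs. $M^d$, $M'$ vs. $M$) goes with which direction of each inequality so that all constants land on the correct side. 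Everything else is a routine composition of the change-of-variables formula with the already-established uniform bound of Proposition \ref{prop:ext_R}.
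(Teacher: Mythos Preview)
Your proposal is correct and is exactly the natural change-of-variables argument one expects here: pull back to the periodic structure via $\Phi$, apply Proposition \ref{prop:ext_R}, and push forward again, with the Jacobian bounds from (T2)--(T3) and \eqref{eq:Psibdd} absorbing all $\omega$-dependence into constants depending only on $d,p,Y^-,M,\mu$. The paper itself does not give an in-text proof but refers to Appendix A of \cite{AGGJS13}, where precisely this argument is carried out; your write-up matches that approach.
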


For a proof of this result, we refer to Appendix A of \cite{AGGJS13}. The constant $C$ above can be made independent of $\omega$ because the bounds in (A2)(A3) and \eqref{eq:Psibdd} are uniform in $\omega$. Next, we consider functions defined on the scaled space. For any $\omega \in \Omega$ and $f \in \eps\Phi(\Rdp)$, define
\begin{equation}\label{eq:Pdef_oe}
\left(P^\eps_\omega f\right) (\cdot, \omega) =  \left(P_\omega f_\eps(\cdot,\omega) \right)\left(\frac{\cdot}{\eps}\right), \quad \text{where} \quad f_\eps(x,\omega) = f (\eps x, \omega).
\end{equation}
Then we have
\begin{proposition}\label{prop:ext_Roe}
Let $P^\eps_\omega : W^{1,p}_{\rm loc}(\eps\Phi(\Rdp,\omega)) \to W^{1,p}_{\rm loc}(\R^d)$ be defined as above. Then there exists some $C = C(d,p,Y^-,M,\mu)$, which is independent of $\omega$, such that for any $K \subset \subset \R^d$, we have
\begin{equation}\label{eq:ext_RKoe}
\begin{aligned}
\|P^\eps_\omega f\|_{L^p(\eps\Phi(K,\omega))} &\le C\|f\|_{L^p(\eps\Phi(K\cap \Rdp,\omega))},\\ 
\|\nabla P^\eps_\omega f\|_{L^p(\eps\Phi(K,\omega))} &\le C\|\nabla f\|_{L^p(\eps\Phi(K\cap \Rdp,\omega))}.
\end{aligned}
\end{equation}
\end{proposition}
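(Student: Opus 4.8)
The plan is to reduce this to Proposition \ref{prop:ext_Ro} by the dilation change of variables $x = \eps y$, exploiting the fact that the bounds in \eqref{eq:ext_RKo} are invariant under rescaling. First I would check that $P^\eps_\omega$ is well defined and that it extends $f$: if $f(\cdot,\omega) \in W^{1,p}_{\rm loc}(\eps\Phi(\Rdp,\omega))$, then since the dilation $y\mapsto \eps y$ maps $\Phi(\Rdp,\omega)$ onto $\eps\Phi(\Rdp,\omega)$, the rescaled function $f_\eps(\cdot,\omega) = f(\eps\,\cdot,\omega)$ belongs to $W^{1,p}_{\rm loc}(\Phi(\Rdp,\omega))$; Proposition \ref{prop:ext_Ro} then gives $P_\omega f_\eps(\cdot,\omega)\in W^{1,p}_{\rm loc}(\R^d)$, hence $P^\eps_\omega f = (P_\omega f_\eps)(\cdot/\eps)\in W^{1,p}_{\rm loc}(\R^d)$, and since $P_\omega f_\eps = f_\eps$ on $\Phi(\Rdp,\omega)$ we get $P^\eps_\omega f = f$ on $\eps\Phi(\Rdp,\omega)$.

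The two inequalities then follow by bookkeeping the Jacobian factors. For the $L^p$ estimate, the substitution $x=\eps y$ gives $\|P^\eps_\omega f\|_{L^p(\eps\Phi(K,\omega))} = \eps^{d/p}\|P_\omega f_\eps\|_{L^p(\Phi(K,\omega))}$ (using that $x\in\eps\Phi(K,\omega)$ iff $x/\eps\in\Phi(K,\omega)$); the first line of \eqref{eq:ext_RKo}, applied with the same compact set $K$, bounds this by $\eps^{d/p}C\|f_\eps\|_{L^p(\Phi(K\cap\Rdp,\omega))}$, and the reverse substitution gives $\|f_\eps\|_{L^p(\Phi(K\cap\Rdp,\omega))} = \eps^{-d/p}\|f\|_{L^p(\eps\Phi(K\cap\Rdp,\omega))}$. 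The two powers of $\eps$ cancel, leaving exactly the claimed bound with the same $C$. For the gradient estimate I would additionally use $\nabla(P^\eps_\omega f)(x) = \eps^{-1}(\nabla P_\omega f_\eps)(x/\eps)$ and $\nabla f_\eps(y)=\eps(\nabla f)(\eps y)$; the extra factors $\eps^{-1}$ and $\eps$ also cancel, and the second line of \eqref{eq:ext_RKo} then yields the result. Since the constant $C$ in Proposition \ref{prop:ext_Ro} depends only on $d,p,Y^-,M,\mu$ and not on $\omega$, the same holds here.

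There is essentially no genuine obstacle in this argument. The one point worth flagging is that everything hinges on the \emph{scale-invariant} form of \eqref{eq:ext_RKo}: one needs $\|Pf\|_{L^p}$ — not merely $\|Pf\|_{W^{1,p}}$ — controlled by $\|f\|_{L^p(Y^+)}$, which is precisely what the subtraction of the mean of $f$ over $Y^+$ in \eqref{eq:Pdef} buys. Were the estimate only of the (non-homogeneous) $W^{1,p}\to W^{1,p}$ type, the powers of $\eps$ would accumulate rather than cancel and the constant would blow up as $\eps\to0$. The remaining verification — that the dilation commutes correctly with the cubewise application of $P$ and with pre- and post-composition by $\Phi^{\pm1}$ — is routine and follows from the observation that $\eps\Phi(\cdot,\omega)$ is simply $\Phi(\cdot,\omega)$ post-composed with the dilation.
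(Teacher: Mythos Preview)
Your proposal is correct and is exactly the standard dilation/scaling argument one expects here; the paper itself does not spell out a proof but defers to Appendix~A of \cite{AGGJS13}, where the same change of variables is carried out. Your observation that the scale-invariant form of \eqref{eq:ext_RKo} (bought by the mean subtraction in \eqref{eq:Pdef}) is what makes the $\eps$-factors cancel is precisely the point.
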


Finally, recall the decomposition of $D$ in \eqref{eq:KEdef}. We can extend a function $f \in W^{1,p}(\Depsp)$ to $P^\eps_\omega f \in W^{1,p}(D)$ by using the extension operator $P^\eps_\omega$ in \eqref{eq:Pdef_oe} on each of the deformed and rescaled cubes $\eps \Phi(Y_k,\omega)$ in $E_\eps$, while leaving the function unchanged in the cushion layer $K_\eps$. Abusing notations, we denote this operator still by $P^\eps_\omega$. Then
\begin{proposition}\label{prop:ext_D}
Let $P^\eps_\omega : W^{1,p}(\Depsp) \to W^{1,p}(D)$ be as above. Then there exists some $C = C(d,Y^-,M,\mu)$ such that, for all $f \in W^{1,p}(\Depsp)$,
\begin{equation}
\|P^\eps_\omega f\|_{L^p(D)} \le C\|f\|_{L^p(\Depsp)}, \quad\quad
\end{equation}
\end{proposition}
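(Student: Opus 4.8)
The plan is to split the $L^p$ norm over $D$ along the decomposition $D = E_\eps \cup K_\eps$ of \eqref{eq:KEdef}, to estimate the contribution of $E_\eps$ one deformed cell at a time using Proposition \ref{prop:ext_Roe}, and to treat the cushion layer $K_\eps$ trivially. First I would record three facts. Modulo a Lebesgue-null set, $E_\eps = \bigcup_{k\in\Ieps}\eps\Phi(Y_k,\omega)$ is a \emph{disjoint} union (since $\Phi(\cdot,\omega)$ is a diffeomorphism and the cubes $Y_k$ are pairwise disjoint); by construction $P^\eps_\omega f$ equals $f$ on $K_\eps$, while on each $\eps\Phi(Y_k,\omega)$ it is produced by the operator \eqref{eq:Pdef_oe} out of $f|_{\eps\Phi(Y_k^+,\omega)}$ alone; and both $K_\eps$ and $\bigcup_{k\in\Ieps}\eps\Phi(Y_k^+,\omega)$ are contained in $\Depsp$ — the former because $\overline{\Depsm}\subset\overline{E_\eps}$, so $K_\eps = D\setminus\overline{E_\eps}\subset D\setminus\overline{\Depsm} = \Depsp$; the latter because $\overline{Y^-}\subset\subset Y$ forces $Y_k^+$ to be disjoint from $\overline{Y_{k'}^-}$ for every $k'$, hence $\bigcup_{k}\eps\Phi(Y_k^+,\omega)$ is disjoint from $\overline{\Depsm}$.

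The core step is the single-cell estimate. Applying Proposition \ref{prop:ext_Roe} with $K = \overline{Y_k}$ (and noting that $\eps\Phi(\overline{Y_k}\cap\Rdp,\omega)$ and $\eps\Phi(Y_k^+,\omega)$ differ only by a null set) gives a constant $C = C(d,p,Y^-,M,\mu)$, independent of $\omega$ and of $k$, with
\begin{equation*}
\|P^\eps_\omega f\|_{L^p(\eps\Phi(Y_k,\omega))} \le C\,\|f\|_{L^p(\eps\Phi(Y_k^+,\omega))}, \qquad k\in\Ieps.
\end{equation*}
Raising to the $p$-th power and summing over $k\in\Ieps$, and using that both $\{\eps\Phi(Y_k,\omega)\}$ and $\{\eps\Phi(Y_k^+,\omega)\}$ are pairwise disjoint up to null sets, I obtain
\begin{equation*}
\|P^\eps_\omega f\|_{L^p(E_\eps)}^p \le C^p\sum_{k\in\Ieps}\|f\|_{L^p(\eps\Phi(Y_k^+,\omega))}^p = C^p\,\|f\|_{L^p(\bigcup_{k\in\Ieps}\eps\Phi(Y_k^+,\omega))}^p \le C^p\,\|f\|_{L^p(\Depsp)}^p.
\end{equation*}
Adding $\|P^\eps_\omega f\|_{L^p(K_\eps)}^p = \|f\|_{L^p(K_\eps)}^p\le\|f\|_{L^p(\Depsp)}^p$ and using $|D\setminus(E_\eps\cup K_\eps)| = 0$ yields $\|P^\eps_\omega f\|_{L^p(D)}^p\le (C^p+1)\|f\|_{L^p(\Depsp)}^p$, which is the asserted bound after renaming the constant. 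The companion gradient estimate $\|\nabla P^\eps_\omega f\|_{L^p(D)}\le C\|\nabla f\|_{L^p(\Depsp)}$ follows verbatim from the second inequality in \eqref{eq:ext_RKoe}.

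The one point I expect to need a little care — as opposed to the norm bound, which is routine patching — is that the cell-by-cell recipe indeed produces an element of $W^{1,p}(D)$, not merely of $W^{1,p}$ on each piece. This rests on the locality of the unit-cell operator: by \eqref{eq:Pdef} one has $Pg = g$ on $Y^+$, and since $\dist(\Gamma_0,\partial Y) = \beta > 0$ a full neighborhood of $\partial Y$ inside $Y$ lies in $Y^+$, so $P$ leaves $g$ unchanged there. Transporting this through the diffeomorphism and the rescaling as in \eqref{eq:Pdef_o}--\eqref{eq:Pdef_oe}, and recalling that $\Gameps$ stays at distance at least $\eps\beta/M'$ from the boundaries of the deformed cells, I would conclude that $P^\eps_\omega f$ agrees with $f$ in a neighborhood (relative to $D$) of $\partial(\eps\Phi(Y_k,\omega))$ for each $k\in\Ieps$ and of $\partial E_\eps\cap D$. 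Since $f$ is a single $W^{1,p}$ function on the open set $\Depsp$, which contains all of these neighborhoods, no jump is created across the gluing surfaces, so $P^\eps_\omega f\in W^{1,p}(D)$. This is exactly the locality already exploited in Propositions \ref{prop:ext_R}--\ref{prop:ext_Roe}, so no new idea is needed.
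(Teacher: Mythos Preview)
Your argument is correct and is precisely the natural patching argument one expects here: split $D$ into $E_\eps$ and the cushion layer $K_\eps$, invoke the scaled single-cell bound \eqref{eq:ext_RKoe} on each $\eps\Phi(Y_k,\omega)$, sum, and add the trivial contribution from $K_\eps$. The paper does not actually prove Proposition~\ref{prop:ext_D} in the text; it simply refers to Appendix~A of \cite{AGGJS13}, so there is no in-paper proof to compare against, but your proof is the standard one and matches what that appendix contains.
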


For the proofs of Propositions \ref{prop:ext_Roe} and \ref{prop:ext_D}, we refer to Appendix A of \cite{AGGJS13}. The periodic versions of these propositions were developed by Monsurr\`o \cite{Monsur}. 

\subsection{Basic energy estimates}
\label{sec:energ}

Here we record some basic energy estimates for the solutions of \eqref{eq:aniso}.
\subsubsection{Functional space on the perforated domain in $D$}

Fix an $\eps > 0$ and a realization $\omega \in \Omega$,
the natural functional space for \eqref{eq:aniso} is
\begin{equation}
\Wcal_\eps := \left\{ u = u^+\chi_\eps^+ + u^-\chi_\eps^- \,|\, u^+ \in
H^1(\Depsp), \, u^- \in H^1(\Depsm), \, u\lvert_{\partial D} = 0 \right\},
\end{equation}
where $\chi_\eps^{\pm}$ denote the characteristic functions of the
sets $D_\eps^{\pm}(\omega)$, and $u\lvert_{\partial D}$ is the trace of $u$ on $\partial D$.
It is easy to verify that
\begin{equation}\label{eq:W-norm}
\|u \|_{\Wcal_\eps} = \left(\|\nabla u^+ \|_{L^2(\Depsp)}^2 + \|\nabla
u^- \|_{L^2(\Depsm)}^2 + \eps \|u^+ -
u^-\|_{L^2(\Gameps)}^2\right)^{\frac 1 2}
\end{equation}
defines a norm on $\Wcal_\eps$. Abusing notations, we set $H^1_0(\Depsp) := \{w \in H^1(\Depsp) \,|\, u\rvert_{\partial D} = 0\}$, that is $H^1$ functions on $\Depsp$ that vanish at the boundary of $\partial D$. Then $\Wcal_\eps = H^1_0(\Depsp)\times H^1(\Depsm)$, and in view of the Poincar\'e inequality \eqref{eq:poincare}, the
somewhat more standard norm for this space is given by
\begin{equation}\label{eq:H1-norm}
\|u\|_{H_0^1(\Depsp)\times H^1(\Depsm)} = \left(\|\nabla
u^+\|_{L^2(\Depsp)}^2 + \|\nabla u^-\|_{L^2(\Depsm)}^2 +
\|u^-\|_{L^2(\Depsm)}^2 \right)^{\frac 1 2}.
\end{equation}
In fact, these two norms are equivalent:

\begin{proposition}\label{prop:equiv}
The norm $\|\cdot\|_{\Wcal_\eps}$ is equivalent with the standard norm
in \eqref{eq:H1-norm}. Moreover, there exist
positive constants $C_1 < C_2$, independent of $\eps$ and $\omega$, such that for all $u \in \Wcal_\eps$, we have
\begin{equation}
\label{eq:equiv} C_1 \|u\|_{\Wcal_\eps} \le \|u\|_{H_0^1(\Depsp) \times H^1(\Depsm)}
\le C_2 \|u\|_{\Wcal_\eps}.
\end{equation}
\end{proposition}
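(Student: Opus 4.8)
The plan is to prove the two-sided bound \eqref{eq:equiv} by a cell-by-cell argument: pull everything back by the map $x = \eps\Phi(z,\omega)$ to the undeformed, unit-scale reference cells $Y_k$, where one has fixed trace and Poincar\'e inequalities with constants independent of $\eps$ and $\omega$, and then sum over $k\in\Ieps$. A preliminary reduction: by \eqref{eq:poincare} we have the uniform Poincar\'e inequality $\|u^+\|_{L^2(\Depsp)}\le C_P\|\nabla u^+\|_{L^2(\Depsp)}$ for all $u^+\in H^1_0(\Depsp)$, with $C_P$ independent of $\eps,\omega$ (extend $u^+$ to $P^\eps_\omega u^+\in H^1_0(D)$ via Proposition \ref{prop:ext_D}, use the accompanying gradient bound coming from Proposition \ref{prop:ext_Roe} applied cellwise, and apply the standard Poincar\'e inequality on the fixed domain $D$). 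Since the gradient terms $\|\nabla u^\pm\|_{L^2(\Depspm)}^2$ appear identically in \eqref{eq:W-norm} and \eqref{eq:H1-norm}, it suffices to control $\|u^-\|_{L^2(\Depsm)}$ by $\|u\|_{\Wcal_\eps}$ (for the right inequality in \eqref{eq:equiv}) and $\eps^{1/2}\|u^+-u^-\|_{L^2(\Gameps)}$ by $\|u\|_{H^1_0(\Depsp)\times H^1(\Depsm)}$ (for the left one).

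Fix $k\in\Ieps$ and set $\hat u^\pm(z)=u^\pm(\eps\Phi(z,\omega),\omega)$. The change of variables $x=\eps\Phi(z,\omega)$ gives, with constants depending only on $\mu,M,d$ (uniformly in $k$ and $\omega$), the comparabilities $\int_{\eps\Phi(Y_k^\pm)}|u^\pm|^2\,dx\simeq\eps^d\int_{Y_k^\pm}|\hat u^\pm|^2\,dz$, $\int_{\eps\Phi(Y_k^\pm)}|\nabla u^\pm|^2\,dx\simeq\eps^{d-2}\int_{Y_k^\pm}|\nabla\hat u^\pm|^2\,dz$, and $\int_{\eps\Phi(\Gamma_k)}|u^+-u^-|^2\,d\sigma\simeq\eps^{d-1}\int_{\Gamma_k}|\hat u^+-\hat u^-|^2\,d\sigma$; the volume and gradient transfers use (T2)(T3) and \eqref{eq:Psibdd}, while the surface transfer uses that the tangential Jacobian of $\Phi(\cdot,\omega)$ along each $C^2$ surface $\Gamma_k=\Gamma_0+k$ is bounded above and below in terms of $\mu,M,d$ only. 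On the reference cell, the elementary Poincar\'e--trace inequality on the connected $C^2$ domain $Y^-$, namely $\|\hat u^-\|_{L^2(Y^-)}\le C(\|\nabla\hat u^-\|_{L^2(Y^-)}+\|\hat u^-\|_{L^2(\Gamma_0)})$, together with $\|\hat u^-\|_{L^2(\Gamma_0)}\le\|\hat u^+-\hat u^-\|_{L^2(\Gamma_0)}+\|\hat u^+\|_{L^2(\Gamma_0)}$ and the trace inequality $\|\hat u^+\|_{L^2(\Gamma_0)}\le C\|\hat u^+\|_{H^1(Y^+)}$ (all translation-invariant in $k$, $C=C(d,Y^-)$), yield on cell $k$ the bound $\|\hat u^-\|_{L^2(Y_k^-)}^2\le C(\|\nabla\hat u^-\|_{L^2(Y_k^-)}^2+\|\hat u^+-\hat u^-\|_{L^2(\Gamma_k)}^2+\|\hat u^+\|_{L^2(Y_k^+)}^2+\|\nabla\hat u^+\|_{L^2(Y_k^+)}^2)$, while the trace inequalities on $Y_k^\pm$ give $\|\hat u^+-\hat u^-\|_{L^2(\Gamma_k)}^2\le C(\|\hat u^+\|_{H^1(Y_k^+)}^2+\|\hat u^-\|_{H^1(Y_k^-)}^2)$.

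Multiplying these cell inequalities by the power of $\eps$ dictated by the scaling comparabilities and using $\eps<1$ to absorb superfluous factors $\eps^2\le 1$, then summing over $k\in\Ieps$ (the sets $\eps\Phi(Y_k,\omega)$ are pairwise disjoint and contained in $D$, with $\Depsm=\bigsqcup_k\eps\Phi(Y_k^-,\omega)$, $\Gameps=\bigsqcup_k\eps\Phi(\Gamma_k,\omega)$, and $\bigsqcup_k\eps\Phi(Y_k^+,\omega)\subset\Depsp$), I obtain $\|u^-\|_{L^2(\Depsm)}^2\le C(\|\nabla u^-\|_{L^2(\Depsm)}^2+\eps\|u^+-u^-\|_{L^2(\Gameps)}^2+\|u^+\|_{L^2(\Depsp)}^2+\|\nabla u^+\|_{L^2(\Depsp)}^2)$ and $\eps\|u^+-u^-\|_{L^2(\Gameps)}^2\le C(\|u^+\|_{L^2(\Depsp)}^2+\|\nabla u^+\|_{L^2(\Depsp)}^2+\|u^-\|_{L^2(\Depsm)}^2+\|\nabla u^-\|_{L^2(\Depsm)}^2)$. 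Replacing $\|u^+\|_{L^2(\Depsp)}^2$ by $C_P^2\|\nabla u^+\|_{L^2(\Depsp)}^2$ via the uniform Poincar\'e inequality and recalling \eqref{eq:W-norm}--\eqref{eq:H1-norm}, the first estimate gives $\|u\|_{H^1_0(\Depsp)\times H^1(\Depsm)}\le C_2\|u\|_{\Wcal_\eps}$ and the second gives $C_1\|u\|_{\Wcal_\eps}\le\|u\|_{H^1_0(\Depsp)\times H^1(\Depsm)}$, with $C_1,C_2$ depending only on $d,Y^-,M,\mu$, as claimed.

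I expect the only genuine subtlety to be the uniformity in $\eps$ and $\omega$ of the surface-measure transfer $\int_{\eps\Phi(\Gamma_k)}(\cdot)\,d\sigma_x=\eps^{d-1}\int_{\Gamma_k}(\cdot)\,J_{\Gamma_k}(z,\omega)\,d\sigma_z$: one must verify that the tangential Jacobian $J_{\Gamma_k}$ is bounded above and below uniformly in $k$ and $\omega$, which follows from $\Gamma_k=\Gamma_0+k$ together with the uniform bounds $|\nabla\Phi|\le M$, $|D^2\Phi|\le M$, $\det\nabla\Phi\ge\mu$ of (T2)(T3). Everything else is routine bookkeeping of $\eps$-powers and disjointness of the cells.
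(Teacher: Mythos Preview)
Your argument is correct. The paper does not actually give its own proof of this proposition; it simply refers to \cite{Monsur} for the periodic case and \cite{AGGJS13} for the random case, and your cell-by-cell pull-back argument (transfer to the reference cells $Y_k$ via $x=\eps\Phi(z,\omega)$, apply fixed trace and Poincar\'e--trace inequalities on $Y^\pm$, sum over $k\in\Ieps$, and close with the uniform Poincar\'e inequality on $\Depsp$ obtained from the extension operator) is precisely the approach used in those references. One minor point of presentation: you invoke \eqref{eq:poincare}, which in the paper is stated \emph{after} Proposition~\ref{prop:equiv}, so it is good that you sketched its independent derivation via $P^\eps_\omega$ and the Poincar\'e inequality on the fixed domain $D$; this avoids any appearance of circularity.
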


This equivalence relation was established by Monsurr\`{o}
\cite{Monsur} in the periodic setting, and in the random setting it was proved in \cite{AGGJS13}. Some additional properties of the functions in $\Wcal_\eps$ are recorded below; we refer to \cite[Appendix C]{AGGJS13} for the proof, and to \cite{Monsur} for similar results in the periodic setting.

\begin{proposition}
There exists some constant $C > 0$, which is independent of $\eps$ and $\omega$, such that for all $v \in \Wcal_\eps$, we have
\begin{eqnarray}
\|v^+\|_{L^2(\Depsp)} &\le& C\|\nabla v^+\|_{L^2(\Depsp)}, \label{eq:poincare}\\
\|v^-\|_{L^2(\Depsm)} &\le& C\left(\sqrt{\eps} \|v^-\|_{L^2(\Depsm)} + \eps \|\nabla v^-\|_{L^2(\Depsm)}\right), \label{eq:DmL2}\\
\|v\|_{L^2(D)} &\le& C\|v\|_{\Wcal_\eps}. \label{eq:DL2}
\end{eqnarray}
\end{proposition}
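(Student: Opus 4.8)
The plan is to reduce each estimate to a Poincar\'e- or trace-type inequality on a fixed reference configuration and then transfer it to the rescaled deformed geometry, using the extension operators of Section~\ref{sec:funct} together with the uniform bi-Lipschitz bounds (T2)--(T3) on $\Phi$. We begin with \eqref{eq:poincare}. Given $v = v^+\chi^+_\eps + v^-\chi^-_\eps \in \Wcal_\eps$, apply the extension operator $P^\eps_\omega$ of Proposition~\ref{prop:ext_D} to $v^+$. Since $P^\eps_\omega$ leaves $v^+$ unchanged on the cushion layer $K_\eps$, which contains $\partial D$, the extension $P^\eps_\omega v^+$ again vanishes on $\partial D$, hence lies in $H^1_0(D)$; and by the cube-wise gradient bound of Proposition~\ref{prop:ext_Roe} (applied on each $\eps\Phi(Y_k,\omega)\subset E_\eps$, the gradient being unchanged on $K_\eps$) it satisfies $\|\nabla P^\eps_\omega v^+\|_{L^2(D)} \le C\|\nabla v^+\|_{L^2(\Depsp)}$ with $C$ independent of $\eps$ and $\omega$. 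The classical Poincar\'e inequality on the bounded domain $D$ then gives $\|v^+\|_{L^2(\Depsp)} \le \|P^\eps_\omega v^+\|_{L^2(D)} \le C_D\|\nabla P^\eps_\omega v^+\|_{L^2(D)} \le C\|\nabla v^+\|_{L^2(\Depsp)}$, which is \eqref{eq:poincare}.

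For \eqref{eq:DmL2}, note that each connected component of $\Depsm$ has the form $\eps\Phi(Y^-_k,\omega)$, a bi-Lipschitz image (with constants controlled by $M,\mu$, hence uniformly in $k$ and $\omega$) of the fixed Lipschitz domain $Y^-$, dilated by $\eps$. On the reference domain one has the trace--Poincar\'e inequality $\|u\|_{L^2(Y^-)} \le C\big(\|u\|_{L^2(\Gamma_0)} + \|\nabla u\|_{L^2(Y^-)}\big)$, valid since $\partial Y^- = \Gamma_0$. Composing with $\Phi(\cdot,\omega)$ and then rescaling by $\eps$ produces, on each component, $\|v^-\|_{L^2(\eps\Phi(Y^-_k))} \le C\big(\sqrt{\eps}\,\|v^-\|_{L^2(\eps\Phi(\Gamma_k))} + \eps\|\nabla v^-\|_{L^2(\eps\Phi(Y^-_k))}\big)$; the powers of $\eps$ are exactly those dictated by the scalings $\|\cdot\|_{L^2(\mathrm{vol})}\sim \eps^{d/2}$, $\|\cdot\|_{L^2(\mathrm{surf})}\sim \eps^{(d-1)/2}$, $\|\nabla\cdot\|_{L^2(\mathrm{vol})}\sim \eps^{(d-2)/2}$. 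Squaring, summing over $k\in\Ieps$, and using that the components are pairwise disjoint (so the local estimates simply add up), we obtain \eqref{eq:DmL2}, with the $L^2$ norm on the right-hand side taken over the interface $\Gameps$ and with a constant uniform in $\eps$ and $\omega$.

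Finally we prove \eqref{eq:DL2}. Since $D = \Depsp \cup \Gameps \cup \Depsm$ up to a null set, $\|v\|_{L^2(D)}^2 = \|v^+\|_{L^2(\Depsp)}^2 + \|v^-\|_{L^2(\Depsm)}^2$. The first term is at most $C\|\nabla v^+\|_{L^2(\Depsp)}^2 \le C\|v\|_{\Wcal_\eps}^2$ by \eqref{eq:poincare}. For the second term, start from \eqref{eq:DmL2} and split the interface contribution: $\sqrt{\eps}\,\|v^-\|_{L^2(\Gameps)} \le \sqrt{\eps}\,\|v^+ - v^-\|_{L^2(\Gameps)} + \sqrt{\eps}\,\|v^+\|_{L^2(\Gameps)} \le \|v\|_{\Wcal_\eps} + \sqrt{\eps}\,\|v^+\|_{L^2(\Gameps)}$. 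A scaled trace inequality on the cubes $\eps\Phi(Y^+_k,\omega)$ adjacent to the interfaces gives $\sqrt{\eps}\,\|v^+\|_{L^2(\Gameps)} \le C\big(\|v^+\|_{L^2(\Depsp)} + \|\nabla v^+\|_{L^2(\Depsp)}\big) \le C\|\nabla v^+\|_{L^2(\Depsp)} \le C\|v\|_{\Wcal_\eps}$, using \eqref{eq:poincare} and $\eps \le 1$. Since also $\eps\|\nabla v^-\|_{L^2(\Depsm)} \le \|\nabla v^-\|_{L^2(\Depsm)} \le \|v\|_{\Wcal_\eps}$, we conclude $\|v^-\|_{L^2(\Depsm)} \le C\|v\|_{\Wcal_\eps}$, and hence \eqref{eq:DL2}.

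The only step requiring genuine care is the second one: one must verify that passing from the reference domain $Y^-$ to $\Phi(Y^-_k,\omega)$ and then dilating by $\eps$ alters the trace--Poincar\'e constant only through the explicit powers of $\eps$ recorded above, with a prefactor depending solely on $d$, $Y^-$, $M$ and $\mu$. This is precisely where the uniform bi-Lipschitz bounds (T2)--(T3) on $\Phi(\cdot,\omega)$ and the uniform separation of the components $\{\eps\Phi(Y^-_k)\}_{k\in\Ieps}$ enter; the remaining manipulations are routine rescalings. A complete argument along these lines is given in \cite[Appendix C]{AGGJS13}.
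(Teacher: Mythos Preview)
Your argument is correct and is precisely the standard route: extension plus Poincar\'e on $D$ for \eqref{eq:poincare}, componentwise trace--Poincar\'e plus scaling for \eqref{eq:DmL2}, and a triangle-inequality/scaled-trace combination for \eqref{eq:DL2}. The paper does not give its own proof of this proposition but simply refers to \cite[Appendix C]{AGGJS13}, which is exactly the reference you invoke, so there is nothing to compare. One useful byproduct of your write-up is that it makes explicit a typo in the stated inequality \eqref{eq:DmL2}: the first term on the right should be $\sqrt{\eps}\,\|v^-\|_{L^2(\Gameps)}$ rather than $\sqrt{\eps}\,\|v^-\|_{L^2(\Depsm)}$, consistently with how the estimate is actually used in the proof of Corollary~\ref{cor:extlim}.
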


\subsubsection{The energy estimates}

With $\eps>0$ and $\omega \in \Omega$ fixed as before, a function $\ueps \in \Wcal_\eps$ is said to be a weak solution to \eqref{eq:aniso} if for all $v = v^+ \chi_\eps^+ + v^- \chi_\eps^- \in \Wcal_\eps$, the following holds:
\begin{equation}
\begin{aligned}
\int_{\Depsp} A^\eps(x,\omega) \nabla \uepsp(x) \cdot \nabla v^+(x) dx \, + \, \int_{\Depsm} A^\eps(x,\omega) \nabla \uepsm(x) \cdot \nabla v^-(x) dx\\
+  \, \frac{1}{\eps} \int_{\Gameps}
(\uepsp-\uepsm)(v^+ - v^-) d\sigma(x) = \int_D f(x)v(x) dx.
\end{aligned}
\label{eq:rpdeweak}
\end{equation}

By the standard Lax-Milgram theorem, one obtains the existence and uniqueness of the weak solution to \eqref{eq:aniso} and some basic energy estimates.

\begin{proposition}\label{prop:energy} Let $f \in L^2(D)$. There exists a unique weak solution $u_\eps \in \Wcal_\eps$ for \eqref{eq:aniso}. Moreover, assume that $\eps \le 1/\sqrt{2}$ there exists a constant $C$, which is independent of $\eps$ and $\omega$, such that
\begin{equation}
\label{eq:GradientEst} \|\nabla \uepsp\|_{L^2(\Depsp)} + \|\nabla
\uepsm\|_{L^2(\Depsm)} \le C \|f\|_{L^2},
\end{equation}
\begin{equation}
\|\uepsp - \uepsm\|_{L^2(\Gameps)} \le C \sqrt{\eps}\|f\|_{L^2}.
\label{eq:L2GammaEst}
\end{equation}
\end{proposition}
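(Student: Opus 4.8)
The plan is to read the weak formulation \eqref{eq:rpdeweak} as the variational identity $a_\eps(\ueps,v)=\ell(v)$ for all $v\in\Wcal_\eps$, with
\[
a_\eps(u,v)=\int_{\Depsp}A^\eps\nabla u^+\cdot\nabla v^+\,dx+\int_{\Depsm}A^\eps\nabla u^-\cdot\nabla v^-\,dx+\frac1\eps\int_{\Gameps}(u^+-u^-)(v^+-v^-)\,d\sigma
\]
and $\ell(v)=\int_D f\,v\,dx$, and then to apply the Lax--Milgram theorem on the Hilbert space $(\Wcal_\eps,\|\cdot\|_{\Wcal_\eps})$; completeness of $\Wcal_\eps$ is inherited from $H^1(\Depsp)\times H^1(\Depsm)$ together with continuity of the trace onto $\Gameps$ and onto $\partial D$. (The form $a_\eps$ is symmetric since $A^\eps$ is, but only Lax--Milgram is needed.) Here $\eps$ and $\omega$ are fixed throughout the existence step.

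First I would check the three Lax--Milgram hypotheses. Continuity of $a_\eps$ follows from Cauchy--Schwarz and the ellipticity bound $\Lambda$ in (A2): the two volume terms are bounded by $\Lambda\|u\|_{\Wcal_\eps}\|v\|_{\Wcal_\eps}$, and after writing $\tfrac1\eps(u^+-u^-)(v^+-v^-)=\tfrac1{\eps^2}\bigl(\sqrt\eps(u^+-u^-)\bigr)\bigl(\sqrt\eps(v^+-v^-)\bigr)$ the interface term is bounded by $\eps^{-2}\|u\|_{\Wcal_\eps}\|v\|_{\Wcal_\eps}$; the $\eps$-dependence of this constant is harmless at this stage. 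Continuity of $\ell$ is immediate from Cauchy--Schwarz and \eqref{eq:DL2}: $|\ell(v)|\le\|f\|_{L^2(D)}\|v\|_{L^2(D)}\le C\|f\|_{L^2(D)}\|v\|_{\Wcal_\eps}$, with $C$ independent of $\eps$ and $\omega$. The crucial point is coercivity with an $\eps$- and $\omega$-independent constant: the lower bound $\lambda$ gives $a_\eps(u,u)\ge\lambda\bigl(\|\nabla u^+\|_{L^2(\Depsp)}^2+\|\nabla u^-\|_{L^2(\Depsm)}^2\bigr)+\eps^{-1}\|u^+-u^-\|_{L^2(\Gameps)}^2$, and since $\eps^{-1}\ge\eps$ for $\eps\le1$ the interface term controls $\eps\|u^+-u^-\|_{L^2(\Gameps)}^2$, so $a_\eps(u,u)\ge\min\{\lambda,1\}\|u\|_{\Wcal_\eps}^2$. (The threshold $\eps\le1/\sqrt2$ in the statement is the one convenient when the Poincar\'e-type inequalities \eqref{eq:poincare}--\eqref{eq:DmL2} are invoked, since those become effective only once a fixed multiple of $\sqrt\eps$ is below $1$.) Lax--Milgram then produces the unique weak solution $\ueps\in\Wcal_\eps$.

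For the a priori estimates I would test \eqref{eq:rpdeweak} with $v=\ueps$: the left side equals $a_\eps(\ueps,\ueps)$, bounded below by the coercivity estimate, and the right side is $\ell(\ueps)\le C\|f\|_{L^2}\|\ueps\|_{\Wcal_\eps}$ by \eqref{eq:DL2}; dividing gives $\|\ueps\|_{\Wcal_\eps}\le C\|f\|_{L^2}$. The first two terms of the norm \eqref{eq:W-norm} then yield \eqref{eq:GradientEst}, while from $\eps^{-1}\|\uepsp-\uepsm\|_{L^2(\Gameps)}^2\le a_\eps(\ueps,\ueps)=\ell(\ueps)\le C\|f\|_{L^2}^2$ one reads off \eqref{eq:L2GammaEst}. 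I expect no genuine obstacle: the only real work is the careful bookkeeping of powers of $\eps$ so that the coercivity constant, and hence the constants in \eqref{eq:GradientEst}--\eqref{eq:L2GammaEst}, come out independent of $\eps$; uniformity in $\omega$ is automatic because $\lambda$, $\Lambda$ and the constants in the Propositions of Section \ref{sec:energ} are uniform in $\omega$. The substantive analytic inputs --- the norm equivalence \eqref{eq:equiv} and the Poincar\'e-type inequalities \eqref{eq:poincare}--\eqref{eq:DL2} on the perforated domain --- are already available from \cite{Monsur,AGGJS13}, so nothing new is needed geometrically.
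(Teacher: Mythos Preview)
Your proposal is correct and follows essentially the same approach as the paper: define the bilinear form $a_\eps$ and linear form $\ell$, verify the Lax--Milgram hypotheses with the coercivity constant $\min\{\lambda,1\}$, and then test with $v=\ueps$ to obtain the energy estimates via \eqref{eq:DL2}. Your extraction of \eqref{eq:GradientEst}--\eqref{eq:L2GammaEst} is in fact slightly more streamlined than the paper's, which spells out a Young's inequality step (and this is where the threshold $\eps\le 1/\sqrt2$ is actually used, to ensure $\eps^{-1}-\eps\ge 1/(2\eps)$ after absorption); your route via $\|\ueps\|_{\Wcal_\eps}\le C\|f\|_{L^2}$ and then $\eps^{-1}\|\uepsp-\uepsm\|_{L^2(\Gameps)}^2\le a_\eps(\ueps,\ueps)\le C\|f\|_{L^2}^2$ needs only $\eps\le 1$.
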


\begin{proof}
For the existence and uniqueness of weak solution, define the bilinear form $\cA^\eps(\cdot,\cdot)$ on $\Wcal_\eps \times \Wcal_\eps$ and the linear form $\ell$ on $\Wcal_\eps$ by
\begin{equation*}
\cA^\eps(u,v) := \int_{\Depsp} A^\eps \nabla u^+
\cdot \nabla v^+ + \int_{\Depsm} A^\eps \nabla u^-
\cdot \nabla v^- + \frac{1}{\eps}
\int_{\Gameps} (u^+ - u^-)(v^+ - v^-) d\sigma(x),
\end{equation*}
\begin{equation*}
\ell(v) := \int_D f(x) v(x) dx.
\end{equation*}
It is clear that $\cA^\eps$ and $\ell$ are bounded operators. Moreover, due to \eqref{eq:ellip}, we have
\begin{equation*}
\cA^\eps(u,u) \ge \min(\lambda,1) \|u\|_{\Wcal_\eps}^2.
\end{equation*}
This shows that $\cA^\eps$ is coercive. By Lax--Milgram theorem
there exists a unique $u_\eps \in \Wcal_\eps$ such that $\cA^\eps(u_\eps,v) = \ell(v)$ for all $v \in \Wcal_\eps$, i.e. $u_\eps$ is the weak solution to \eqref{eq:aniso}.

To obtain the energy estimates, we take $v = u_\eps$ in \eqref{eq:rpdeweak}. In view of the ellipticity of $A^\eps$ and \eqref{eq:DL2}, we have
\begin{equation}\label{eq:energy_bdd}
\begin{aligned}
& \lambda \left(\|\nabla \uepsp\|^2_{L^2(\Depsp)} + \|\nabla \uepsm\|^2_{L^2(\Depsm)}\right) + \frac{1}{\eps}\|\uepsp - \uepsm\|_{L^2(\Gameps)}^2
\le \|f\|_{L^2(D)}\|u_\eps\|_{L^2(D)}\\
\le \, &C\|f\|_{L^2(D)}\left( \|\nabla \uepsp\|_{L^2(\Depsp)} + \|\nabla \uepsm\|_{L^2(\Depsm)} + \eps \|\uepsp - \uepsm\|_{L^2(\Gameps)} \right)\\
\le \, &C\|f\|^2_{L^2(D)} + \frac{\lambda}{2} \left(\|\nabla \uepsp\|^2_{L^2(\Depsp)} + \|\nabla \uepsm\|^2_{L^2(\Depsm)}\right) + \eps \|u_\eps^+ - u_\eps^-\|^2_{L^2(\Gameps)}.
\end{aligned}
\end{equation}
In the last line above, we used the inequality that $ab \le \epsilon a^2 + \frac{1}{4\epsilon} b^2$. Since $\eps \le 1/\sqrt{2}$ is small, $\eps^{-1} - \eps \ge 1/(2\eps)$, and we have
\begin{equation*}
\frac{\lambda}{2} \left(\|\nabla \uepsp\|^2_{L^2(\Depsp)} + \|\nabla \uepsm\|^2_{L^2(\Depsm)}\right) + \frac{1}{2\eps} \|\uepsp - \uepsm\|_{L^2(\Gameps)} \le C\|f\|^2_{L^2(D)}.
\end{equation*} 
This yields \eqref{eq:GradientEst} and \eqref{eq:L2GammaEst}. In particular, $C$ depends on $\lambda$ but not on $\eps$ or $\omega$.
\end{proof}

Let $P^\eps_\omega$ denotes the extension operator of Proposition \ref{prop:ext_D}. Then we have the following estimates.

\begin{corollary}\label{cor:extlim}
Assume the same conditions of Proposition \ref{prop:energy}.
Let $P^\eps_\omega$ be the extension
operator. Then there exists a constant $C$, which is independent of $\eps$ and $\omega$,
such that
\begin{equation}
\|P^\eps_\omega \uepsp\|_{H^1(D)} \le C \|f\|_{L^2(D)}, \label{eq:BddExt}
\end{equation}
\begin{equation}
\|P^\eps_\omega \uepsp - \ueps \|_{L^2(D)} \le C \eps \|f\|_{L^2(D)}.
\end{equation}
\end{corollary}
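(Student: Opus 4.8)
The plan is to prove Corollary~\ref{cor:extlim} by directly combining the bounds of Proposition~\ref{prop:ext_D} (and the local bounds of Proposition~\ref{prop:ext_Roe}) with the energy estimates \eqref{eq:GradientEst}--\eqref{eq:L2GammaEst} of Proposition~\ref{prop:energy}. For \eqref{eq:BddExt}, first I would estimate the gradient: since $P^\eps_\omega$ leaves $\uepsp$ unchanged on the cushion layer $K_\eps$ and applies the rescaled cube-wise extension on each $\eps\Phi(Y_k,\omega)\subset E_\eps$, the second inequality in \eqref{eq:ext_RKoe} (applied cube by cube and summed over $k \in \Ieps$, with the constant uniform in $\omega$ by the uniform bounds in (A2), (A3) and \eqref{eq:Psibdd}) gives $\|\nabla P^\eps_\omega \uepsp\|_{L^2(D)} \le C\|\nabla \uepsp\|_{L^2(\Depsp)}$. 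Together with \eqref{eq:GradientEst} this controls $\|\nabla P^\eps_\omega\uepsp\|_{L^2(D)}$ by $C\|f\|_{L^2(D)}$. For the $L^2$ part of the $H^1$ norm, I would invoke Proposition~\ref{prop:ext_D} directly, $\|P^\eps_\omega\uepsp\|_{L^2(D)} \le C\|\uepsp\|_{L^2(\Depsp)}$, and then use the Poincar\'e inequality \eqref{eq:poincare}, $\|\uepsp\|_{L^2(\Depsp)} \le C\|\nabla\uepsp\|_{L^2(\Depsp)}$, followed again by \eqref{eq:GradientEst}. Since $P^\eps_\omega\uepsp$ vanishes on $\partial D$ it lies in $H^1_0(D)$, and adding the two estimates yields \eqref{eq:BddExt}.

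For the second estimate, the key observation is that $P^\eps_\omega\uepsp - \ueps$ is supported in the set where $P^\eps_\omega\uepsp$ differs from $\ueps$, i.e. precisely on $\Depsm \cup \bigl(\bigcup_{k\in\Ieps}\eps\Phi(Y_k^+,\omega)\setminus\Depsp\bigr)$; more simply, it vanishes on $\Depsp$ (where $P^\eps_\omega\uepsp = \uepsp = \ueps$) and on $K_\eps$ (where the extension is the identity and there is no membrane). Hence $\|P^\eps_\omega\uepsp - \ueps\|_{L^2(D)}^2 \le \|P^\eps_\omega\uepsp - \uepsm\|_{L^2(\Depsm)}^2 + (\text{contribution from the thin shells }\eps\Phi(Y_k^+)\text{ between }\Gameps\text{ and the extension region})$, and on $\Depsm$ one writes $P^\eps_\omega\uepsp - \uepsm = (P^\eps_\omega\uepsp - \uepsp|_{\Gameps}) + (\uepsp - \uepsm)$ in a trace sense. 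The first difference is controlled through a scaled Poincar\'e/trace inequality on each deformed cube: a function with zero average-type control whose gradient is small in a region of diameter $O(\eps)$ has $L^2$ norm of order $\eps$ times its gradient norm, which after summing gives $C\eps\|\nabla\uepsp\|_{L^2(\Depsp)} \le C\eps\|f\|_{L^2}$; the jump term is bounded using \eqref{eq:L2GammaEst} together with the scaled Poincar\'e inequality \eqref{eq:DmL2} on $\Depsm$, which gives an extra factor of $\eps$ relative to the $L^2(\Gameps)$ bound, again producing $O(\eps)\|f\|_{L^2}$.

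The main obstacle I anticipate is making the $\eps$-gain in the second estimate rigorous and uniform in $\omega$: one must carefully track, cube by cube, a Poincar\'e-type inequality on the rescaled deformed cells $\eps\Phi(Y_k,\omega)$ of the form $\|v - \bar v\|_{L^2(\eps\Phi(Y_k,\omega))} \le C\eps\|\nabla v\|_{L^2(\eps\Phi(Y_k,\omega))}$ with $C$ depending only on $d$, $Y^-$, $M$, $\mu$ (this follows by scaling from the fixed-domain Poincar\'e inequality on $\Phi(Y,\omega)$, whose constant is uniform because $\Phi(Y,\omega)$ is a bi-Lipschitz image of $Y$ with uniform bi-Lipschitz constants), and then sum these local estimates using that the cells have bounded overlap and $|\Ieps| = O(\eps^{-d})$. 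This is exactly the kind of computation carried out in \cite[Appendix C]{AGGJS13} and \cite{Monsur} in the periodic case, so I would cite those for the routine parts and only indicate how the uniform-in-$\omega$ bounds from (T2), (T3) and \eqref{eq:Psibdd} make the deformed version go through. The remaining steps --- collecting the pieces and invoking \eqref{eq:GradientEst}, \eqref{eq:L2GammaEst} --- are then straightforward.
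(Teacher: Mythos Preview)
Your argument for \eqref{eq:BddExt} is correct and matches the paper's: the gradient bound from the extension operator (Proposition~\ref{prop:ext_Roe}/\ref{prop:ext_D}) combined with \eqref{eq:GradientEst} and the Poincar\'e inequality \eqref{eq:poincare} is exactly what is used.

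For the second estimate your strategy is sound in spirit but is more convoluted than necessary, and contains a confused aside. The remark about ``thin shells $\eps\Phi(Y_k^+)$ between $\Gameps$ and the extension region'' is spurious: $\eps\Phi(Y_k^+)\subset\Depsp$, and on all of $\Depsp$ one has $P^\eps_\omega\uepsp=\uepsp=\ueps$, so the difference is supported \emph{exactly} on $\Depsm$ and nothing else. More importantly, the trace-level decomposition $(P^\eps_\omega\uepsp-\uepsp|_{\Gameps})+(\uepsp-\uepsm)$ and the subsequent cube-by-cube Poincar\'e argument are not needed. The paper's route is a one-line application of \eqref{eq:DmL2} to the function $v:=P^\eps_\omega\uepsp-\uepsm\in H^1(\Depsm)$: since $P^\eps_\omega\uepsp\in H^1(D)$, its trace on $\Gameps$ from either side equals $\uepsp|_{\Gameps}$, so the boundary term in \eqref{eq:DmL2} is precisely $\sqrt{\eps}\,\|\uepsp-\uepsm\|_{L^2(\Gameps)}\le C\eps\|f\|_{L^2}$ by \eqref{eq:L2GammaEst}, while the gradient term $\eps\|\nabla v\|_{L^2(\Depsm)}$ is bounded by $\eps(\|\nabla P^\eps_\omega\uepsp\|_{L^2(D)}+\|\nabla\uepsm\|_{L^2(\Depsm)})\le C\eps\|f\|_{L^2}$ via \eqref{eq:BddExt} and \eqref{eq:GradientEst}. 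In effect you are reproving \eqref{eq:DmL2} locally instead of simply invoking it; once you recognize that \eqref{eq:DmL2} already encodes the scaled Poincar\'e/trace inequality you want, the whole second estimate collapses to three lines.
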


\begin{proof} The first inequality follows immediately from \eqref{eq:GradientEst} and the Poincar\'e inequality \eqref{eq:poincare}. For the second inequality, in view of \eqref{eq:DmL2}, we have
\begin{eqnarray*}
\|P^\eps_\omega \uepsp - \ueps \|_{L^2(D)} &=& \|P^\eps_\omega \uepsp - \uepsm \|_{L^2(\Depsm)} 
\\
&\le &C\sqrt{\eps} \|P^\eps_\omega
\uepsp - \uepsm \|_{L^2(\Gameps)} + C\eps
\|\nabla(P^\eps_\omega \uepsp - \uepsm)
\|_{L^2(\Depsm)}.
\end{eqnarray*}
Thanks to \eqref{eq:L2GammaEst}, the first item on the right is bounded by $C\eps\|f\|_{L^2}$. For the second term, we have
\begin{equation*}
\eps \|\nabla(P^\eps_\omega \uepsp - \uepsm)\|_{L^2(\Depsm)} \le \eps \|\nabla P^\eps_\omega \uepsp\|_{L^2(D)} + \eps \|\nabla \uepsm\|_{L^2(\Depsm)},
\end{equation*}
and in view of \eqref{eq:L2GammaEst}, it is bounded by $C\eps \|f\|_{L^2(D)}$. 
\end{proof}

\section{The Auxiliary Problem}
\label{sec:auxil}

In this section we solve the auxiliary problem \eqref{eq:auxil}, which provides building blocks to carry out the oscillating test function methods of homogenization theory; see e.g. \cite{murat,Monsur,PV79}. We first explain the difficulty and outline the strategy.

We aim to find weak (distributional) solution $w_p(\cdot,\omega)$ of \eqref{eq:auxil} for a.e. $\omega \in \Omega$, such that $\nabla w_p (\Phi(\cdot,\cdot),\cdot)$, as a random field, is stationary. The natural functional space for the solutions to \eqref{eq:auxil}, i.e. $w_p(y,\omega)$, where $y \in \R^d$ and $\omega \in \Omega$, is
\begin{equation}\label{e.Hcal}
\Hcal := \{w=\widetilde{w}\circ \Phi^{-1}~|~ \widetilde{w} \in \widetilde{\Hcal}\} 
\quad \text{where} \quad
\widetilde{\Hcal} := L^2(\Omega, H^1_{\rm loc}(\Rdp) \times H^1_{\rm loc}(\Rdm)).
\end{equation}
As mentioned in the introduction and due to the lack of compactness, however, there is no Poincar\'e inequality for the auxiliary problem, and it is not clear at all how to solve it for each realization. We follow the standard procedures of Papanicolaou and Varadhan \cite{PV79} and of Kozlov \cite{Kozlov}, add a small zero-order term to regularize the problem, and wish to obtain a meaningful limit by passing the regularization is sent to zero. This leads us to investigate the following regularized problem.
\begin{equation}
\left\{
\begin{aligned}
&-\nabla \cdot\left( A (y,\omega) \nabla \left[w_{p,\delta}^\pm(y,\omega) + p\cdot y\right] \right) + \delta w_{p,\delta}^\pm= 0, \quad &\text{ in } \Phi(\Rdpm,\omega),\\
&\frac{\partial}{\partial \nu_A} w_{p,\delta}^+(y,\omega) = \frac{\partial}{\partial \nu_A} w_{p,\delta}^-(y,\omega), \quad &\text{ on }  \Phi(\Gamd,\omega),\\
&w_{p,\delta}^+(y,\omega) - w_{p,\delta}^-(y,\omega) = \frac{\partial}{\partial \nu_A} w_{p,\delta}^+(y,\omega) + \nu_y \cdot Ap, \quad &\text{ on }  \Phi(\Gamd,\omega),\\
&w_{p,\delta}^\pm(y) = \widetilde{w}_{p,\delta}^\pm \circ \Psi(y), \quad \widetilde{w}_{p,\delta}^\pm(z,\omega) \text{ is stationary}. &
\end{aligned}
\right.
\label{eq:regul}
\end{equation}

If there were no complicated structure $\Phi(\Gamd,\omega)$, the above would reduce to an elliptic problem over the whole space but with an absorptive potential. The Green's function, for each realization, decays exponentially fast and the solution is standard. Even in the setting of this paper, with the regularization, we still expect that $w_{p,\delta}$ can be controlled at infinity, and that $w_{p,\delta}(\cdot,\cdot)$ belongs to a subspace $\Hcal_S$ of $\Hcal$:
\begin{equation}\label{e.HcalS}
\Hcal_S := \{w=\widetilde{w}\circ \Phi^{-1}~|~ \widetilde{w} \in \widetilde{\Hcal}_S\}, \quad \text{where} \quad
\widetilde{\Hcal}_S := L^2(\Omega, H^1_{\rm loc}(\Rdp) \times H^1_{\rm loc}(\Rdm)),
\end{equation}
which contains stationary functions. If the notion of stationarity is with respect to all $\R^d$-translations, there is a natural correspondence between the differential operator $\nabla$ in $\R^d$ and the infinitesimal generator $D$ of the translation group $\{\tau_x\}_{x\in \R^d}$ on $\Omega$, and any stationary field $\widetilde{w}(\cdot,\omega) \in H^1_{\rm loc}(\R^d)$ is of the form $\widetilde{w}(0,\tau_\cdot \omega)$, and hence can be identified the random variable $\hat{w}(\omega) = \widetilde{w}(0,\omega)$ that satisfies $\hat{w}, D\hat{w} \in L^2(\Omega)$. Were this the case and were the structure $\Phi(\Gamd,\omega)$ absent, \eqref{eq:regul} would be lifted to the space $H^1(\Omega) = \{\hat{w} \in L^2(\Omega) ~|~ D\hat{w} \in L^2(\Omega)\}$. Then $\hat{w}$ would be found for the equation in the probability space and $\hat{w}(\tau_x \omega)$ would solve the problem in the physical space. Due to the $\Z^d$-stationarity and the presence of $\Phi(\Gamd,\omega)$, the method above does not work and we have to solve \eqref{eq:regul} in the physical space. 

Our first natural idea is to follow the approach of \cite{BLBL06} and take advantage of the fact that $\Hcal_S$, equipped with the inner product
\begin{equation}
\langle u, v \rangle_{\Hcal_S} := \E \left[\int_{Y^+} \nabla \widetilde{u}^+\cdot \nabla \widetilde{v}^+ (z,\cdot)dz + \int_{Y^-} \nabla \widetilde{u}^+\cdot \nabla \widetilde{v}^- (z,\cdot)dz +\int_Y \widetilde{u}\widetilde{v}(z,\cdot) dz\right],
\end{equation}
is a Hilbert space, where the relation $\widetilde{u}(z,\cdot) = u\circ\Phi(z,\cdot)$ is used. Let $\Acal_\delta: \Hcal_S \times \Hcal_S \to \R$ be the bilinear form
\begin{equation}
\begin{aligned}
\Acal_\delta(u,v) = \E\left[\int_{\Phi(Y^+,\cdot)} A\nabla u^+\cdot \nabla v^+ (y,\cdot)dy + \int_{\Phi(Y^-,\cdot)} A\nabla u^-\cdot \nabla v^- (y,\cdot) dy \right.\\
\left. + \int_{\Phi(\Gamma_0,\cdot)} (u^+-u^-)(v^+-v^-)(y,\cdot)d\sigma(y) + \delta \int_{\Phi(Y,\cdot)} uv(y,\cdot) dy \right].
\end{aligned}
\end{equation}
Let $F_p: \Hcal_S \to \R$ be the linear form
\begin{equation}
F_p(v) = - \E \left[\int_{\Phi(Y^+,\cdot)} A p \cdot \nabla v^+(y,\cdot) dy + \int_{\Phi(Y^-,\cdot)} A p \cdot \nabla v^-(y,\cdot) dy \right].
\end{equation}
It is easy to check that, due to \eqref{eq:ellip} and the assumptions (A1)(A2)(A3), for each fixed $\delta>0$, $\Acal_\delta$ is bi-continuous and coercive on $\Hcal_S$, and $F_p$ is continuous on $\Hcal_S$. By the Lax-Milgram theorem, there exists a unique $w_{p,\delta} = (w_{p,\delta}^+,w^-_{p,\delta}) \in \Hcal_S$ such that
\begin{equation}
\Acal_\delta (w_{p,\delta}, v) = F_p(v), \quad \text{for all } v \in \Hcal_S.
\label{eq:lax_mil}
\end{equation}
Essentially, this is like lifting \eqref{eq:regul} to the space $\Hcal_S$. However, it is not clear at this stage that $w_{p,\delta}(\cdot,\omega)$, for each $\omega$, solves \eqref{eq:regul} in the distributional sense. This is because of the lack of correspondence between $\nabla$ and $D$ and the lack of Weyl's decomposition type of result on $\Hcal_S$. Indeed, for $w_{p,\delta}(\cdot,\omega)$ to be a distributional solution, we need to verify that: for all $\phi(\cdot) \in H^1_{\rm loc}(\Phi(\Rdp,\omega)) \times H^1_{\rm loc}(\Phi(\Rdm,\omega))$ with support $K\subset\subset \R^d$, 
\begin{equation}
\begin{aligned}
\int_{K\cap \Phi(\Rdp,\omega)} A(\nabla w_{p,\delta}^+ + p)\cdot \nabla \phi^+ dy \ + \ \int_{K\cap \Phi(\Rdm,\omega)}  A(\nabla w_{p,\delta}^- + p)\cdot \nabla \phi^- dy \\
\ + \ \delta \int_{K} w_{p,\delta} \phi \ dy \ + \  \int_{K \cap \Phi(\Gamd,\omega)} (w^+_{p,\delta} - w_{p,\delta}^-)(\phi^+ -\phi^-) d\sigma(y) = 0.
\end{aligned}
\label{eq:weak_pw_reg}
\end{equation} 
The solution given by \eqref{eq:lax_mil}, however, only take test functions $\varphi \in \Hcal_S$ and the integrals are over $\Omega \times \R^d$. So we need a new method. We remark that our method, at the end, establishes the (non-trivial) fact that the solution given by \eqref{eq:lax_mil} agrees with the distributional solution, for a.e. $\omega \in \Omega$; see Remark \ref{r.laxmil} below. This is because the stationary structure of the problem forces the two notions of solutions to agree, but this fact needs to be proved more carefully, unlike in the setting of $\R^d$-stationary with simple geometry where this equivalence is clear.

We explain our strategy to solve the regularized problem \eqref{eq:regul}. The convenient functional space for this problem is $H^1_{\rm uloc}$, locally uniform $H^1$ spaces. More precisely, we recall that $Y_k, Y^+_k, Y^-_k$, $k\in \Z^d$, are translated cubes (cells inside the cubes, etc.) and define
\begin{equation}
\begin{aligned}
H^1_{\rm uloc}(\R^d) &:= \{\widetilde{v} \in H^1_{\rm loc}(\R^d) ~|~ \sup_{k\in \Z^d} \|\widetilde{v}\|_{H^1(Y_k)} < \infty\},\\
H^1_{\rm uloc}(\Rdp) &:= \{\widetilde{v} \in H^1_{\rm loc}(\Rdp) ~|~ \sup_{k\in \Z^d} \|\widetilde{v}\|_{H^{1}(Y^+_k)} < \infty\},\\ 
H^1_{\rm uloc}(\Rdm) &:= \{\widetilde{v} \in H^1_{\rm loc}(\Rdm) ~|~ \sup_{k\in \Z^d} \|\widetilde{v}\|_{H^{1}(Y^-_k)} < \infty\}.\end{aligned}
\end{equation}
Locally uniformly Sobolev spaces are sometimes called Sobolev-Kato spaces; see \cite{Kato,Lady,cholewa}. They are natural for problems posed on unbounded domains without decaying condition at infinity; functions in this case have infinite energy over the whole space. The metric in $H^1_{\rm uloc}$ requires uniform controls over bounded sets; and weak compactness, say in $L^2_{\rm loc}$, can be obtained by applying Rellich's theorem on a sequence of bounded sets that exhaust the whole space. 

We expect that for each $\omega\in \Omega$, the solution $w_{p,\delta}(\cdot,\omega)$ is of the form $\widetilde{w}_{p,\delta}(\Psi(\cdot,\omega),\omega)$ and $\widetilde{w}_{p,\delta}(\cdot,\omega)$ belongs to $H^1_{\rm uloc}(\Rdp) \times H^1_{\rm uloc}(\Rdm)$. Indeed, with the regularization, we expect to be able to control $w_{p,\delta}$ at infinity, and since the position of the origin does not play a role in the problem \eqref{eq:regul}, we expect the control to be uniform in the sense shown in the definitions of the above spaces. Our strategy to solve \eqref{eq:regul}, for each $\omega \in \Omega$, is to truncate the problem on an increasing sequence of sets $\Phi(Q_n,\omega)$ that exhaust $\R^d$, prove that the sequence of solutions to the truncated problems (extended by zero outside $\Phi(Q_n,\omega)$) are uniformly bounded in $H^1_{\rm uloc}(\Rdp) \times H^1_{\rm uloc}(\Rdm)$, and show that a subsequence converges to the solution of \eqref{eq:regul}. More precisely, we prove

\begin{lemma}\label{lem:regul}
For each fixed $\delta > 0$ and $p \in \R^d$, there exists $w_{p,\delta} \in \Hcal_S$ such that for each $\omega \in \Omega$, $w_{p,\delta}(\cdot,\omega) = \widetilde{w}_{p,\delta}(\Psi(\cdot,\omega),\omega)$, $\widetilde{w}_{p,\delta} \in H^1_{\rm uloc}(\Rdp) \times H^1_{\rm uloc}(\Rdm)$, and $w_{p,\delta}(\cdot,\omega)$ solves the regularized problem \eqref{eq:regul} in the distributional sense. In addition, there exists a constant $C$ which depends only on $d, M, \nu,\Gamma_0$ and $p$, such that
\begin{equation}
\begin{aligned}
& \E \int_{\Phi(Y^+,\cdot)} \lvert\nabla w_{p,\delta}^+(y,\cdot)\rvert^2 dy \ +\ \E \int_{\Phi(Y^-,\cdot)} \lvert\nabla w_{p,\delta}^-(y,\cdot)\rvert^2 dy \le C,\\
& \E \int_{\Phi(\Gamma_0,\cdot)} (w_{p,\delta}^+-w_{p,\delta}^-)^2(y,\cdot) d\sigma(y) \le C, \quad \E\int_{\Phi(Y,\cdot)} \delta \lvert w_{p,\delta}(y,\cdot)\rvert^2 dy \le C.
\end{aligned}
\label{eq:regest}
\end{equation}
\end{lemma}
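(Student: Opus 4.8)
The plan is to build $w_{p,\delta}$ directly in the physical space, for each fixed $\omega$, by an exhaustion argument, and to recover stationarity and the averaged bounds \eqref{eq:regest} afterwards (the Lax--Milgram construction on $\Hcal_S$ sketched above will then be vindicated a posteriori, cf.\ Remark~\ref{r.laxmil}). It is convenient to pull the problem back to the reference configuration: with $\widetilde{w}:=w_{p,\delta}\circ\Phi(\cdot,\omega)$, the system \eqref{eq:regul} becomes a transmission problem on the \emph{fixed} $\Z^d$-periodic domain $\Rdpm$, with the periodic interface $\Gamd$, of the form $-\nabla\cdot(B(z,\omega)\nabla\widetilde{w}^\pm)+\delta\,J(z,\omega)\,\widetilde{w}^\pm=\nabla\cdot\widetilde{f}^\pm$ together with flux continuity and a jump condition on $\Gamd$ (weighted by a surface Jacobian). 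Here $B=(\det\nabla\Phi)(\nabla\Phi)^{-1}\widetilde{A}(\nabla\Phi)^{-T}$, $J=\det\nabla\Phi$, $\widetilde{f}$ is bounded, and all these data are \emph{stationary} in the sense \eqref{eq:stati}; crucially, by (A2), (A3) and \eqref{eq:Psibdd} their ellipticity and boundedness constants are independent of $\omega$. For $n\in\N$ let $Q_n$ be the cube of side $2n$ centred at the origin (a union of unit cells, so that $\Gamd$ stays away from $\partial Q_n$), and solve the truncated problem on $Q_n$ with $\widetilde{w}^+=0$ on $\partial Q_n$: in the natural $\Wcal$-type space over $Q_n$ (whose norm structure is the unscaled analogue of Proposition~\ref{prop:equiv}) the bilinear form is coercive thanks to the absorption term, so Lax--Milgram gives a unique $w_n(\cdot,\omega)$, depending measurably on $\omega$; extend it by $0$ outside $Q_n$.

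The core step is a bound on $w_n$ in $H^1_{\rm uloc}(\Rdp)\times H^1_{\rm uloc}(\Rdm)$ uniform in $n$ (and $\omega$). Testing the truncated equation with $\zeta^2 w_n$, where $\zeta\equiv1$ on $B_R$, $\mathrm{supp}\,\zeta\subset B_{R+1}$, $|\nabla\zeta|\le2$, and using ellipticity and Young's inequality, one obtains for
\begin{equation}
G(R):=\int_{B_R}\bigl(|\nabla w_n^+|^2+|\nabla w_n^-|^2+|w_n|^2\bigr)+\int_{\Gamd\cap B_R}|w_n^+-w_n^-|^2
\end{equation}
a Caccioppoli-type inequality $\delta\,G(R)\le C\bigl(G(R+1)-G(R)\bigr)+C R^{d}$, i.e.\ $G(R)\le\theta\,G(R+1)+C R^{d}$ with $\theta=\theta(\delta)=C/(C+\delta)<1$. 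The non-decaying forcing $\widetilde{f}$ is what produces the $CR^d$ term, but since $R^d$-growth of $G$ is exactly what membership in $H^1_{\rm uloc}$ permits, this is harmless: combining with the global energy estimate (test with $w_n$ over all of $Q_n$) $G(n)\le C(1+\delta^{-1})n^d$ and iterating the recursion from $R$ up to $n$, the geometric factor $\theta^{\,n-R}$ absorbs both the polynomial tail of $\sum_j\theta^j(R+j)^d$ and the global bound, yielding $G(R)\le C(\delta)\,R^d$ for all $R$, uniformly in $n$ and $\omega$; equivalently $\sup_n\|w_n(\cdot,\omega)\|_{H^1_{\rm uloc}(\Rdp)\times H^1_{\rm uloc}(\Rdm)}\le C(\delta)$ with $C(\delta)$ independent of $\omega$. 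I expect this estimate --- closing the iteration, keeping its constants $\omega$-uniform, and carrying the interface terms through --- to be the main obstacle; it plays, in the present geometry, the role of the energy arguments in \cite{GV_Masmoudi,Dalibard_Prange}.

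With the uniform bound in hand, for each $\omega$ a diagonal extraction over balls exhausting $\R^d$ together with Rellich's theorem gives a subsequence $w_{n_j}(\cdot,\omega)\rightharpoonup\widetilde{w}_{p,\delta}(\cdot,\omega)$ in $H^1_{\rm loc}$ and strongly in $L^2_{\rm loc}$, with $\|\widetilde{w}_{p,\delta}(\cdot,\omega)\|_{H^1_{\rm uloc}}\le C(\delta)$; passing to the limit in the weak formulation against compactly supported test functions shows $\widetilde{w}_{p,\delta}(\cdot,\omega)$ solves the reference problem distributionally, and undoing the change of variables produces a distributional solution $w_{p,\delta}(\cdot,\omega)$ of \eqref{eq:regul}. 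To upgrade this to a \emph{stationary} solution, the key point is uniqueness in $H^1_{\rm uloc}$: if $v$ solves the homogeneous reference problem ($\widetilde{f}=0$), the same Caccioppoli inequality gives $G_v(R)\le\theta\,G_v(R+1)$, hence $G_v(R)\le\theta^{\,N-R}G_v(N)\le\theta^{\,N-R}C N^d\to0$ as $N\to\infty$, so $v\equiv0$. Uniqueness forces the whole sequence $w_n(\cdot,\omega)$ to converge, makes the limit measurable in $\omega$, and --- comparing $\lim_n w_n(\cdot+k,\omega)$, which solves the truncated problems on the exhausting cubes $Q_n-k$ with coefficients $B(\cdot,\tau_k\omega)$, with $w_{p,\delta}(\cdot+k,\omega)$ --- yields $\widetilde{w}_{p,\delta}(z+k,\omega)=\widetilde{w}_{p,\delta}(z,\tau_k\omega)$ for all $k\in\Z^d$. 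The $\omega$-uniform bound also gives $\E\bigl(\|\widetilde{w}_{p,\delta}^+(\cdot,\omega)\|^2_{H^1(Y^+)}+\|\widetilde{w}_{p,\delta}^-(\cdot,\omega)\|^2_{H^1(Y^-)}\bigr)<\infty$, so $w_{p,\delta}\in\Hcal_S$, and the stationarity of $\nabla\widetilde{w}_{p,\delta}$ and of the jump follow.

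Finally, for the $\delta$-uniform estimate \eqref{eq:regest}, I would test the distributional equation for the stationary solution with $\zeta_N^2\,\widetilde{w}_{p,\delta}(\cdot,\omega)$, where $\zeta_N\equiv1$ on $B_N$, $\mathrm{supp}\,\zeta_N\subset B_{2N}$, $|\nabla\zeta_N|\le C/N$, divide by $|B_N|$ and take expectations. For any stationary $g$ one has $\E g(z+k,\cdot)=\E g(z,\cdot)$, so $\E g$ is $\Z^d$-periodic and $|B_N|^{-1}\E\int_{B_N}g\to\E\int_{Y}g(z,\cdot)\,dz$; applying this to the quadratic terms, while the cross terms involving $\nabla\zeta_N$ are $O(N^{-1})$ in expectation (here one uses $\E\int_{B_{2N}}|\widetilde{w}_{p,\delta}|^2\lesssim N^d$), one obtains in the limit
\begin{equation}
\E\!\int_{Y^+}\!B|\nabla\widetilde{w}^+|^2+\E\!\int_{Y^-}\!B|\nabla\widetilde{w}^-|^2+\delta\,\E\!\int_{Y}\!J|\widetilde{w}|^2+\E\!\int_{\Gamma_0}\!\omega_\Phi\,|\widetilde{w}^+-\widetilde{w}^-|^2=-\,\E\!\int_{Y}\widetilde{f}\cdot\nabla\widetilde{w}
\end{equation}
with $\widetilde{w}=\widetilde{w}_{p,\delta}$. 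Since the right-hand side is at most $\|\widetilde{f}\|_{\infty}\bigl(\E\int_Y|\nabla\widetilde{w}|^2\bigr)^{1/2}$ and $\|\widetilde{f}\|_{\infty}\le C|p|$, Young's inequality absorbs the gradient and bounds $\E\int_Y|\nabla\widetilde{w}^\pm|^2$, $\delta\,\E\int_Y|\widetilde{w}|^2$ and $\E\int_{\Gamma_0}|\widetilde{w}^+-\widetilde{w}^-|^2$ by a constant depending only on $d$, $M$, $\Gamma_0$, $p$ and the ellipticity constants, and \emph{not} on $\delta$; a change of variables back to $\Phi(Y^\pm,\cdot)$ and $\Phi(\Gamma_0,\cdot)$, using the bounds on $\nabla\Phi$ and $\det\nabla\Phi$, then yields \eqref{eq:regest}. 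A similar averaging argument shows that $w_{p,\delta}$ also satisfies \eqref{eq:lax_mil}, so it coincides with the Lax--Milgram solution (Remark~\ref{r.laxmil}).
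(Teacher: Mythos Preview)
Your argument follows the paper's strategy almost exactly: truncate on exhausting cubes, obtain an $H^1_{\rm uloc}$ bound via a Caccioppoli-type inequality and iteration (your recursion $G(R)\le\theta\,G(R+1)+CR^d$ is the same as the paper's $E_k\le C_\delta(E_{k+1}-E_k)+C(k+1)^d$, just rearranged), extract a limit, and obtain stationarity from the uniqueness in $H^1_{\rm uloc}$. The only substantive variation is your derivation of the $\delta$-uniform bounds \eqref{eq:regest}. The paper integrates the equation against $w_{p,\delta}$ over the single deformed cell $\Phi(Y,\omega)$ and then takes expectation: the boundary flux on $\Phi(\partial Y)$ cancels pairwise because the integrand is stationary while the outer normals on opposite faces of $Y$ are opposite, yielding directly the identity $\mathcal A_\delta(w_{p,\delta},w_{p,\delta})=F_p(w_{p,\delta})$ and hence \eqref{eq:regest}. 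Your large-scale averaging with $\zeta_N$ arrives at the same place but is less direct.

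One small technical slip: with $\zeta_N\equiv 1$ on $B_N$ and $\mathrm{supp}\,\zeta_N\subset B_{2N}$, the transition annulus has volume comparable to $|B_N|$, so $|B_N|^{-1}\E\int\zeta_N^2\,g$ does \emph{not} converge to $\E\int_Y g$; only the portion over $B_N$ does, and the annulus contribution is of the same order. To recover the limiting identity you display, take the transition layer of width $o(N)$ (say $\mathrm{supp}\,\zeta_N\subset B_{N+\sqrt N}$, with $|\nabla\zeta_N|\le C N^{-1/2}$), or normalise by $\int\zeta_N^2$ instead of $|B_N|$. Either fix is routine; alternatively, since only an inequality is needed for \eqref{eq:regest}, you can simply bound the quadratic terms from below by their restriction to $B_N$ and accept a harmless factor $2^d$ on the right-hand side.
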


We note that the uniform in $\delta$ bounds on $\nabla w_{p,\delta}$ above are after taking expectation $\E$. In particular, for each $\omega \in \Omega$, we do not have uniform in $\delta$ bounds of the $H^1_{\rm uloc}$ bounds on $w_{p,\delta}(\cdot,\omega)$.

\begin{proof}{\it Step 1: Construction of weak solution for fixed $\omega \in \Omega$.} In this step,  a realization $\omega$ is fixed and, for notational simplification, we omit the dependence on $\omega$ below. Constants $C$ below are made sure independent of $\omega$. Our goal is to find a unique $w_{p,\delta} \in H^1_{\rm uloc}(\Phi(\Rdp)) \times H^1_{\rm uloc}(\Phi(\Rdm))$ such that \eqref{eq:weak_pw_reg} holds.

{\em Existence.} For each positive integer $n \in \N$, let $Q_n = (-n,n)^d$ be the cube with side length $2n$ and $\Phi(Q_n,\omega)$  be the deformed cube. Consider the truncated problem with Dirichlet boundary conditions on $\partial \Phi(Q_n)$:
\begin{equation}
\left\{
\begin{aligned}
&-\nabla \cdot\left( A (y,\omega) \nabla \left[w_{p,\delta,n}^\pm(y,\omega) + p\cdot y\right] \right) + \delta w_{p,\delta,n}^\pm= 0, &\quad &\text{ in }  \Phi(Q_n\cap \Rdpm),\\
&\frac{\partial}{\partial \nu_A} w_{p,\delta,n}^+(y,\omega) = \frac{\partial}{\partial \nu_A} w_{p,\delta,n}^-(y,\omega), &\quad &\text{ in }  \Phi(Q_n\cap \Gamd),\\
&w_{p,\delta,n}^+(y,\omega) - w_{p,\delta,n}^-(y,\omega) = \frac{\partial}{\partial \nu_A} w_{p,\delta,n}^+(y,\omega) + \nu_y \cdot Ap, &\quad &\text{ on }  \Phi(Q_n\cap\Gamd),\\
&w_{p,\delta,k}^+(y,\omega) = 0, &\quad &\text{ on }  \Phi(\partial Q_n).
\end{aligned}
\right.
\label{eq:piece}
\end{equation}
This is a problem on bounded domain and there is the absorption term. The existence and uniqueness of $w_{p,\delta,n} = (w^+_{p,\delta,n},w^-_{p,\delta,n})$ in the space $\Wcal_n := \{w = \widetilde{w}\circ \Phi^{-1} ~|~ \widetilde{w} \in \widetilde{\Wcal}_n\}$, where
\begin{equation*}
\widetilde{\Wcal}_n := \{\widetilde{w} \in H^1(Q_{n} \cap \Rdp) \times H^1(Q_{n}\cap \Rdm) ~|~ \widetilde{w} = 0 \text{ on } \partial Q_n\},
\end{equation*}
follows directly by applying the Lax-Milgram theorem in the above Hilbert space. The solution satisfies, for any $\phi \in \Wcal_n$,
\begin{equation}
\begin{aligned}
& \int_{\Phi(Q_n\cap \Rdp)} A(y)\nabla w^+_{p,\delta,n}\cdot \nabla \phi^+ dy  + \int_{\Phi(Q_n\cap \Rdm)} A(y)\nabla w^-_{p,\delta,n}\cdot \nabla \phi^- dy  \\
& \quad + \delta \int_{\Phi(Q_n)}  w_{p,\delta,n} \phi\ dy + \int_{\Phi(Q_n\cap \Gamd)} (w^+_{p,\delta,n}-w^-_{p,\delta,n})(\phi^+ - \phi^-) d\sigma(y)\\
= &\quad -\int_{\Phi(Q_n\cap \Rdp)} A(y)p \cdot \nabla \phi^+ dy  - \int_{\Phi(Q_n\cap \Rdm)} A(y)p\cdot \nabla \phi^- dy.
\end{aligned}
\label{eq:weak_piece}
\end{equation}
Extending functions in $\widetilde{\Wcal}_n$ by zero outside $Q_n$, we find $\widetilde{\Wcal}_n \subset H^1_{\rm uloc}(\Rdp) \times H^1_{\rm uloc}(\Rdm)$ for all $n$. Hence, we have obtained a sequence $\{w_{p,\delta,n}\}_{n=1}^\infty$ that belong to $\Wcal : = H^1_{\rm uloc}  (\Phi(\Rdp))\times H^1_{\rm uloc}(\Phi(\Rdm))$ and each of them solves \eqref{eq:piece}. 

Next we establish uniform in $n$ bounds for $w_{p,\delta,n}$. In terms of $\widetilde{w}_{p,\delta,n} = w_{p,\delta,n}\circ \Phi$, we will show that for some $C = C(\lambda,\Lambda,\mu,M,d,\delta)$, independent of $n$, the quantities
\begin{equation}\label{eq:E-def}
\begin{aligned}
E_k  = &\int_{Q_k \cap \Rdp} |\nabla \widetilde{w}^+_{p,\delta,n}|^2 dx + \int_{Q_k \cap \Rdm} |\nabla \widetilde{w}^-_{p,\delta,n}|^2 dx + \delta \int_{Q_k} |\widetilde{w}_{p,\delta,n}|^2 dx \\
& \quad +  \int_{Q_k \cap \Gamd} (\widetilde{w}^+_{p,\delta,n} - \widetilde{w}^-_{p,\delta,n})^2 d\sigma(x)
\end{aligned}
\end{equation}
satisfy the estimate $E_k \le Ck^d$ for all $k \le n$.

For each $k \in \N$, let $Q_{k,k+1}$ be a short-hand notation for $Q_{k+1}\setminus Q_k$. We can find a smooth cutoff function $\chi_k = \widetilde{\chi}_k\circ \Phi^{-1}$ where $0 \le \widetilde{\chi}_k\le 1$ is a cutoff function supported on $Q_{k+1}$ and equals one in $Q_k$. These cutoff functions can be chosen such that $\|\nabla \widetilde{\chi}_k\|_{L^\infty} \lesssim 1$. We note also that $\nabla \widetilde{\chi}_k$ is supported in $Q_{k,k+1}$. Set $\widetilde{\phi} = \widetilde{\chi}_k \widetilde{w}_{p,\delta,n}$ and take $\phi = \widetilde{\phi}\circ \Phi^{-1}$ in the weak formulation \eqref{eq:weak_piece}; we get
\begin{equation}
\begin{aligned}
& \int_{\Phi(Q_{k+1} \cap \Rdp)} \chi_k A\nabla w^+_{p,\delta,n}\cdot \nabla w^+_{p,\delta,n} dy + 
\int_{\Phi(Q_{k+1} \cap \Rdm)} \chi_k A\nabla w^-_{p,\delta,n}\cdot \nabla w^-_{p,\delta,n} dy\\
&+ \int_{\Phi(Q_{k+1} \cap \Gamd)} \chi_k(w^+_{p,\delta,n} - w^-_{p,\delta,n})^2 d\sigma(y) + \delta \int_{\Phi(Q_{k+1})} \chi_k |w_{p,\delta,n}|^2 dy\\
&+  \int_{\Phi(Q_{k,k+1}\cap \Rdp)} w^+_{p,\delta,n}A\nabla w^+_{p,\delta,n} \cdot \nabla \chi_k dy + \int_{\Phi(Q_{k,k+1}\cap \Rdm)} w^-_{p,\delta,n}A\nabla w^-_{p,\delta,n} \cdot \nabla \chi_k dy \quad \\
\label{weak-c0}
= &\quad - \int_{\Phi(Q_{k+1} \cap \Rdp)} \chi_k Ap\cdot \nabla w^+_{p,\delta,n} dy - 
\int_{\Phi(Q_{k+1} \cap \Rdm)} \chi_k Ap\cdot \nabla w^-_{p,\delta,n} dy\\
&-  \int_{\Phi(Q_{k,k+1}\cap \Rdp)} w^+_{p,\delta,n}Ap \cdot \nabla \chi_k dy - \int_{\Phi(Q_{k,k+1}\cap \Rdm)} w^-_{p,\delta,n}Ap\cdot \nabla \chi_k dy.
\end{aligned}
\end{equation}
The first four terms on the left are good ones because they are positive. Let us denote the absolute value of the other terms by $I_5, I_6, I_7,I_8,I_9$ and $I_{10}$, according to their order of appearance in the equation above. These terms can be bounded by using H\"older inequality and the inequality $ab \le \epsilon a^2 + \frac{1}{4\epsilon} b^2$. In particular, $I_7,I_8$ involve integrals over $\Phi(Q_{k+1})$, and we have
\begin{equation}\label{weak-c1}
I_7 \le \frac{\lambda}{2}\int_{\Phi(Q_{k+1}\cap \Rdp)} |\nabla w^+_{p,\delta,n}|^2 dy + C(k+1)^d,
\end{equation}
where $C=C(\lambda,\Lambda,\mu,M,p)$ and $(k+1)^d$ is the order of the volume of $\Phi(Q_{k+1})$; $I_8$ shares the same estimate with $w^+_{p,\delta,n}$ replaced by $w^-_{p,\delta,n}$. 

The terms $I_5,I_6,I_{9},I_{10}$ involve integrals over $\Phi(Q_{k,k+1})$ which is the space between $\Phi(Q_k)$ and $\Phi(Q_{k+1})$. Furthermore, they involve integrals of $w^\pm_{p,\delta,n}$, which appears as $\delta|w^\pm_{p,\delta,n}|^2$ in the definition of $E_k$. Therefore, we control them as follows:
\begin{equation*}
\begin{aligned}
I_5 &\le \frac{\delta}{2} \int_{\Phi(Q_{k,k+1}\cap \Rdp)} |w^+_{p,\delta,n}|^2 dy + \frac{C}{\delta} \int_{\Phi(Q_{k,k+1}\cap \Rdp)} |\nabla w^+_{p,\delta,n}|^2 dy,\\
I_9 &\le \frac{\delta}{2} \int_{\Phi(Q_{k,k+1})} |w^+_{p,\delta,n}|^2 dy + \frac{C}{\delta} (k+1)^d,
\end{aligned}
\end{equation*}
and $I_6, I_{10}$ have similar bounds, with $w^+_{p,\delta,n}$ replaced by $w^-_{p,\delta,n}$. In \eqref{weak-c1}, we further divide the integral over $\Phi(Q_{k+1})$ into two pieces: one over $\Phi(Q_k)$ and the other over $\Phi(Q_{k,k+1})$. We have deliberately made the coefficient in front of the integral in \eqref{weak-c1} less than the corresponding ones on the left hand side of \eqref{weak-c0}. Hence, after some cancellation we have
\begin{equation*}
\begin{aligned}
&\frac{1}{2}\left[\int_{\Phi(Q_k\cap \Rdp)} A\nabla w^+_{p,\delta,n}\cdot \nabla w^+_{p,\delta,n} dy \ + \  \int_{\Phi(Q_k\cap \Rdm)} A\nabla w^-_{p,\delta,n}\cdot \nabla w^-_{p,\delta,n} dy \right.\\
 & \qquad \qquad\left. + \ \delta \int_{\Phi(Q_k)} |w_{p,\delta,n}|^2 dy \ + \ \int_{\Phi(Q_k\cap \Gamd)} (w^+_{p,\delta,n} - w^-_{p,\delta,n})^2 d\sigma(y) \right]\\
\le\quad & \frac{C}{\delta} \left[\int_{\Phi(Q_{k,k+1}\cap \Rdp)} |\nabla w^+_{p,\delta,n}|^2 dy \ + \  \int_{\Phi(Q_{k,k+1}\cap \Rdm)} |\nabla w^-_{p,\delta,n}|^2 dy + (k+1)^d \right] \\
& \qquad \qquad +\ \delta \int_{\Phi(Q_{k,k+1})} |w_{p,\delta,n}|^2 dy.
\end{aligned}
\end{equation*}
After changing variable $y$ to $\Phi(x)$ in the integrals above, we find that this inequality shows
\begin{equation}
E_k \le C_\delta (E_{k+1}-E_k + (k+1)^d), \quad \text{ for all } k \le n.
\label{eq:saintV}
\end{equation}
The constant $C_\delta$ depends on $\lambda, \Lambda, \mu, M, p, d,\delta$ and in particular it is of order $\delta^{-1}$, but it is independent of $n$ and $k$. 

We observe that $E_n \le C n^d$ for some $C(\lambda,\Lambda,\mu,M,p,d)$, which follows from \eqref{eq:weak_piece}. By a standard backward induction (see e.g. the proof of Proposition 10 in \cite{GV_Masmoudi}), there exists another positive integer $C'_\delta$ such that
\begin{equation}
E_k \le C'_\delta k^d, \quad \text{ for all } k \le n.
\label{eq:Ek_bd}
\end{equation}
In particular, we have $E_1 \le C'_\delta$. By definition, $\widetilde{w}^\pm_{p,\delta,n}$ vanishes at $\partial Q_n$. Hence, if we extend it by zero outside $Q_n$, then $\widetilde{w}^\pm_{p,\delta,n} \in H^1_{\rm uloc}(\Rdp) \times H^1_{\rm uloc}(\Rdm)$. The bound in \eqref{eq:Ek_bd} with $k=1$ shows
\begin{equation*}
\|\nabla \widetilde{w}^+_{p,\delta,n}\|^2_{L^2(Y^+)} + \|\nabla \widetilde{w}^-_{p,\delta,n}\|^2_{L^2(Y^-)} + \|\widetilde{w}^+_{p,\delta,n} - \widetilde{w}^-_{p,\delta,n}\|^2_{L^2(\Gamma_0)} \le C'_\delta.
\end{equation*}
Examining the estimates on the items in \eqref{weak-c0}, and the estimate for $E_n$, we observe that they are all translation invariant. In particular, the bounds on $E_n$ does not change because if we restrict $\widetilde{w}_{p,\delta,n}$ to the shifted cube $k + Q_{2n}$, then its norm will be smaller since outside $Q_{2n}$ the function is extended by zero. Therefore, we get
\begin{equation*}
\sup_{k \in \Z^d} \left(\|\widetilde{w}^+_{p,\delta,n}\|^2_{H^1(k+Y^+)} + \|\widetilde{w}^-_{p,\delta,n}\|^2_{H^1(k+Y^-)} + \|\widetilde{w}^+_{p,\delta,n} - \widetilde{w}^+_{p,\delta,n}\|^2_{L^2(k+\Gamma_0)}\right) \le C''_\delta.
\end{equation*}
This is an uniform bound for the $H^1_{\rm uloc}(\Rdp)\times H^1_{\rm uloc}(\Rdm)$ norm of $\{w_{p,\delta,n}\}$. As a result, we extract a subsequence of $\widetilde{w}_{p,\delta,n_k}$ that converges weakly to some $\widetilde{w}_{p,\delta} \in H^1_{\rm uloc}(\Rdp)\times H^1_{\rm uloc}(\Rdm)$. The function $w_{p,\delta} = \widetilde{w}_{p,\delta}\circ \Phi^{-1}$ then satisfies \eqref{eq:weak_pw_reg}, i.e. solving \eqref{eq:regul} in the distributional sense. 

{\em Uniqueness.} Given any two solutions $w^{(1)}_{p,\delta}$ and $w^{(2)}_{p,\delta}$ that satisfy \eqref{eq:weak_pw_reg}. Let $v_{p,\delta}$ denotes their difference. Then this function satisfies \eqref{eq:weak_piece} with $w_{p,\delta,n}$ replaced by $v_{p,\delta}$ and $p = 0$. The analysis that follows \eqref{eq:weak_piece} can be repeated, and in particular, the $(k+1)^d$ term in \eqref{eq:saintV} disappears and this yields
\begin{equation*}
E_k\le C_\delta(E_{k+1}-E_k), \quad \forall k\le n.
\end{equation*}
Here, $E_k$ is defined as in \eqref{eq:E-def} with $\widetilde{w}_{p,\delta,n}$ replaced by $\widetilde{v}_{p,\delta}$. The above inequality shows that $E_k \le \eta_\delta E_{k+1}$ for some $0<\eta_\delta<1$ for all $n$ and all $k\le n$. By assumption, $\widetilde{w}^{(j)}_{p,\delta,n}$, $j=1,2$ and $\widetilde{v}_{p,\delta}$ are in $\widetilde{\Wcal} = H^1_{\rm uloc}(\Rdp) \times H^1_{\rm uloc}(\Rdm)$, we have $E_n \le Cn^d\|\widetilde{v}_{p,\delta}\|_{\widetilde{\Wcal}}$. By standard backward induction, we get
\begin{equation}
E_1 \le \eta_\delta^{n-1}E_n \le C\eta_\delta^{n-1} n^d \|\widetilde{v}_{p,\delta}\|_{\widetilde{\Wcal}}.
\end{equation}
Let $n\to \infty$, we get $E_1 = 0$, which implies that $\widetilde{v}_{p,\delta} \equiv 0$ in $Y$. By translation invariance of the above argument, $\widetilde{v}_{p,\delta} = 0$ over the whole space. This proves the uniqueness of the weak solution satisfying \eqref{eq:weak_pw_reg}.

{\itshape Step 2: Stationarity.} Let $w_{p,\delta}$ and $\widetilde{w}_{p,\delta}$ be as defined above. We observe that $\widetilde{w}_{p,\delta}(\cdot+k,\omega) = \widetilde{w}_{p,\delta}(\cdot,\tau_k \omega)$ for all $k \in \Z^d$ and $\omega \in \Omega$. Indeed, due to the stationarity of the parameters in \eqref{eq:regul} and the domain on which the problem is posed, we check directly that $w_{p,\delta}(\cdot+k,\omega)$ is a weak solution to \eqref{eq:regul}, with realization $\tau_k \omega$, in the sense of \eqref{eq:weak_pw_reg}. On the other hand, due to uniqueness we just proved, $w_{p,\delta}(\cdot+k,\omega)$ has to agree with $w_{p,\delta}(\cdot,\tau_k \omega)$. This shows that $\widetilde{w}_{p,\delta}$ is stationary, i.e belonging to $\widetilde{{\Hcal}}_S$.

{\itshape Step 3: Uniform bounds.} To prove \eqref{eq:regest}, we multiply $w^\pm_{p,\delta}(\cdot,\omega)$ on \eqref{eq:regul} and integrate over $\Phi(Y,\omega)$ to get
\begin{equation*}
\begin{aligned}
\int_{\Phi(Y^+)} A\left(p+\nabla w_{p,\delta}^+\right) \cdot \nabla w^+_{p,\delta} dy + \int_{\Phi(Y^-)} A\left(p+\nabla w_{p,\delta}^-\right) \cdot \nabla w^-_{p,\delta} dy 
+ \delta \int_{\Phi(Y)} (w_{p,\delta})^2 dy\\ + \int_{\Phi(\Gamma_0)} (w^+_{p,\delta}- w^-_{p,\delta})^2 d\sigma(y) = \int_{\Phi(\partial Y)} w^+_{p,\delta} \nu_y \cdot A\left(p+\nabla w^+_{p,\delta}\right) d\sigma(y).
\end{aligned}
\end{equation*}
Note that the right hand side is nonzero because $w^+_{p,\delta}$ does not vanish on $\Phi(\partial Y)$. After a change of variable and in light of the formulas \eqref{eq:geo-f1} and \eqref{eq:geo-f2}, the last term can be written as
\begin{equation*}
\int_{\partial Y} \widetilde{w}^+_{p,\delta} \left(D\Phi(x)^{-1}\right)^{t} \nu_x \cdot \widetilde{A}\left(p + \left(D\Phi(x)^{-1}\right)^{t} \nabla \widetilde{w}^+_{p,\delta}\right) \det (D\Phi(x)) \ d\widetilde{\sigma} (x).
\end{equation*}
The integration of this term over $\Omega$ vanishes because the functions in the integrand are stationary except that $\nu_x$ has opposite signs when evaluated at a pair of opposite sides of $Y$. Therefore, after taking expectation, we get $\Acal_\delta (w_{p,\delta}, w_{p,\delta}) = F_p(w_{p,\delta})$ in the sense of \eqref{eq:lax_mil}. This yields the bounds \eqref{eq:regest} and completes the proof of the lemma.
\end{proof}

\begin{rem}\label{r.laxmil} The argument in step 3, in fact, proves that \eqref{eq:lax_mil} holds for $w_{p,\delta}$ and for all $v \in \Hcal_S$. As a result, in view of the uniqueness of the solution to \eqref{eq:lax_mil}, $w_{p,\delta}$ found above agrees (in the space $\Hcal_S$) with the seemingly weaker solution defined by the Lax-Milgram approach on $\Hcal_S$.
\end{rem}
\subsection{Proof of Theorem \ref{thm:auxil}}

We first explain our strategy. The idea is clear and amounts to sending the regularization parameter $\delta \to 0$ in \eqref{eq:regul}, and showing that the limit solves \eqref{eq:auxil}. Since the available (uniform in $\delta$) bounds \eqref{eq:regest} are about the expectations, it is natural to define the following notion of solution to \eqref{eq:auxil}.

We say that $w_p  \in \Hcal = L^2(\Omega, H^1_{\rm loc}(\Phi(\Rdp)) \times H^1_{\rm loc}(\Phi(\Rdm)))$, is an {\it annealed weak solution} to \eqref{eq:auxil} if for any $\varphi$ in $\Hcal$ with support $K \subset\subset \R^d$, it holds that
\begin{equation}
\begin{aligned}
& \E \int_{K\cap \Phi(\Rdp,\omega)} A(\nabla w_p^+ + p)\cdot \nabla \varphi^+ dy \ + \ \E \int_{K\cap \Phi(\Rdm,\omega)}  A(\nabla w_p^- + p)\cdot \nabla \varphi^- dy \\
+ & \quad \E \int_{K \cap \Phi(\Gamd,\omega)} (w^+_p - w^-_p)(\varphi^+ -\varphi^-) d\sigma(y) = 0.
\end{aligned}
\label{eq:weakaux}
\end{equation}

\begin{proposition}\label{prop:weak}
Suppose $\Hcal$ is separable and $w_p \in \Hcal$ is an annealed weak solution to \eqref{eq:auxil}. Then there exists $\Omega_0 \subset \Omega$ with $\Pb(\Omega_0) = 1$, such that for all $\omega \in \Omega_0$, for all $\widetilde{\phi} \in H^1_{\rm loc}(\Rdp) \times H^1_{\rm loc}(\Rdm)$, $\widetilde{\phi}$ supported in $K$ and $\phi(\cdot,\omega) = \widetilde{\phi} \circ \Phi^{-1}(\cdot,\omega)$,
\begin{equation}
\begin{aligned}
& \int_{\Phi(K\cap \Rdp,\omega)} A(\nabla w_p^+ + p)\cdot \nabla \phi^+ dy + \int_{\Phi(K\cap \Rdm,\omega)}  A(\nabla w_p^- + p)\cdot \nabla \phi^- dy \\
+ & \quad \int_{\Phi(K \cap \Gamd,\omega)} (w^+_p - w^-_p)(\phi^+ -\phi^-) d\sigma(y) = 0.
\end{aligned}
\label{eq:weakaux_pw}
\end{equation}
\end{proposition}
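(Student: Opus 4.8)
The statement is a passage from an annealed (expectation-integrated) weak formulation to a quenched (pathwise) one. The natural strategy is a density-plus-countability argument: the annealed identity \eqref{eq:weakaux} holds for all test functions $\varphi \in \Hcal$, and we want to peel off the expectation. The plan is to first fix a countable dense family of test functions, establish \eqref{eq:weakaux_pw} for each of them off a null set, take the countable union of these null sets, and then upgrade to all test functions by density.

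\textit{Step 1: Reduce to deterministic test functions localized on cubes.} For each $n \in \N$, consider test functions $\widetilde{\phi}$ supported in $Q_n$. Since $H^1(Q_n \cap \Rdp) \times H^1(Q_n \cap \Rdm)$ is separable, fix a countable dense subset $\{\widetilde{\phi}_{n,j}\}_{j \in \N}$ of those elements vanishing near $\partial Q_n$ (or with the appropriate support condition). Given such a fixed deterministic $\widetilde{\phi}_{n,j}$ and any bounded measurable function $g: \Omega \to \R$, apply \eqref{eq:weakaux} with the random test function $\varphi(\cdot,\omega) = g(\omega)\, \widetilde{\phi}_{n,j} \circ \Phi^{-1}(\cdot,\omega)$, which lies in $\Hcal$ because of the uniform bounds (A2)(A3) and \eqref{eq:Psibdd} on $\Phi$ and its inverse. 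This gives
\begin{equation*}
\E\!\left[ g(\omega)\, \mathcal{L}_{n,j}(\omega) \right] = 0,
\end{equation*}
where $\mathcal{L}_{n,j}(\omega)$ denotes the left-hand side of \eqref{eq:weakaux_pw} with $\widetilde{\phi} = \widetilde{\phi}_{n,j}$. Since $g$ is an arbitrary bounded measurable function and $\mathcal{L}_{n,j} \in L^1(\Omega)$ (again by the uniform bounds on $\Phi$, the ellipticity of $A$, and $w_p \in \Hcal$), it follows that $\mathcal{L}_{n,j}(\omega) = 0$ for a.e.\ $\omega$. Let $\Omega_{n,j}$ be the corresponding full-measure set.

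\textit{Step 2: Intersect null sets and pass to the limit in the test function.} Set $\Omega_0 = \bigcap_{n,j} \Omega_{n,j}$, so $\Pb(\Omega_0) = 1$. Fix $\omega \in \Omega_0$ and an arbitrary $\widetilde{\phi} \in H^1_{\rm loc}(\Rdp) \times H^1_{\rm loc}(\Rdm)$ supported in some compact $K$; choose $n$ with $K \subset Q_n$. Pick a sequence $\widetilde{\phi}_{n,j_m} \to \widetilde{\phi}$ in $H^1(Q_n \cap \Rdp) \times H^1(Q_n \cap \Rdm)$. In \eqref{eq:weakaux_pw}, the volume terms pass to the limit because $A \in L^\infty$, $w_p(\cdot,\omega) \in H^1_{\rm loc}$, and $\nabla \widetilde{\phi}_{n,j_m} \to \nabla \widetilde{\phi}$ in $L^2$ (after the change of variables $y = \Phi(x)$, whose Jacobian is bounded above and below uniformly). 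The interface term passes to the limit using the trace theorem on $Q_n \cap \Rdp$ and $Q_n \cap \Rdm$: convergence in $H^1$ implies convergence of traces in $L^2(\Gamma_0 \cap Q_n)$, and the jump $w_p^+ - w_p^-$ restricted to the interface is in $L^2_{\rm loc}$ by the same trace theorem applied to $w_p(\cdot,\omega)$. Hence $\mathcal{L}_{n,j_m}(\omega) \to \mathcal{L}(\omega)$, where $\mathcal{L}(\omega)$ is the left side of \eqref{eq:weakaux_pw} for $\widetilde{\phi}$, and since each $\mathcal{L}_{n,j_m}(\omega) = 0$ we conclude $\mathcal{L}(\omega) = 0$.

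\textit{Main obstacle.} The delicate point is the measurability and integrability of the functionals $\mathcal{L}_{n,j}$ — one must check that $\omega \mapsto \int_{\Phi(K\cap\Rdp,\omega)} A(\nabla w_p^+ + p)\cdot \nabla \phi^+\,dy$ is genuinely in $L^1(\Omega)$, which relies on $w_p \in \Hcal = L^2(\Omega, H^1_{\rm loc})$ combined with the uniform ellipticity of $A$ and the uniform two-sided bounds on $D\Phi$; and that the null set can be chosen uniformly over the continuum of $g$'s, which is handled by the duality $L^1$--$L^\infty$. A secondary subtlety is the choice of a \emph{countable} dense family in Step 1 that is rich enough after composition with $\Phi^{-1}(\cdot,\omega)$ for every $\omega \in \Omega_0$ simultaneously — this is why we work with deterministic $\widetilde{\phi}$'s on the fixed (undeformed) domain $Q_n \cap \Rdpm$ and only compose with $\Phi^{-1}$ at the end, so that separability is used on an $\omega$-independent space. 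The separability hypothesis on $\Hcal$ stated in the proposition is precisely what makes this legitimate.
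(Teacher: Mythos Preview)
Your proposal is correct and follows essentially the same approach as the paper: fix a countable dense family of deterministic test functions on the undeformed domain, multiply by arbitrary functions of $\omega$ (you use bounded measurable $g$, the paper uses $\psi\in L^2(\Omega)$) to peel off the expectation, intersect the resulting countably many full-measure sets, and conclude by density. Your version is slightly more detailed in organizing the dense family via the cubes $Q_n$ and in spelling out the trace-theorem step for the interface term, but the core argument is the same.
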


\begin{rem} The above proposition says that an annealed weak solution is also a distributional solution a.e. in $\Omega$. This is because the test functions in \eqref{eq:weakaux_pw} are rich enough. The proof of this proposition is in Appendix \ref{sec:equiv_weak}. We note that the separability of $\Hcal$ is true because we assumed that the probability space is countably generated. 
\end{rem}

\begin{rem}
We remark that the same conclusion holds for the regularized problem \eqref{eq:regul} also. In particular, using test function $\phi = \widetilde{\phi}\circ \Phi^{-1}$, with $\widetilde{\phi}$ of the form $\varphi(x)\psi(\omega)$ and $\varphi$ supported in $K \subset \subset \R^d$, and taking expectation in \eqref{eq:weak_pw_reg}, we verify that
\begin{equation}
\begin{aligned}
& \E \int_{\Phi(K\cap \Rdp,\omega)} A(\nabla w_{p,\delta}^+ + p)\cdot \nabla \phi^+ dy + \E \int_{\Phi(K\cap \Rdm,\omega)}  A(\nabla w_{p,\delta}^- + p)\cdot \nabla \phi^- dy \\
+ & \quad \E\ \delta\int_{\Phi(K,\omega)} w_{p,\delta} \phi\ dy + \E \int_{\Phi(K \cap \Gamd,\omega)} (w^+_{p,\delta} - w^-_{p,\delta})(\phi^+ -\phi^-) d\sigma(y) = 0.
\end{aligned}
\label{eq:weakregaux}
\end{equation}
Since functions of the form $\varphi(x)\psi(\omega)$ is dense in $\widetilde{\Hcal}$, the above equality still holds for test functions $\phi \in \Hcal$. As a result, $w_{p,\delta}$ obtained in Lemma \ref{lem:regul} is an annealed solution of \eqref{eq:regul} as well. We emphasize again, one cannot get this result directly from the Lax-Milgram approach in \eqref{eq:lax_mil}.
\end{rem}

Now we prove Theorem \ref{thm:auxil}. In view of Proposition \ref{prop:weak}, it suffices to find annealed solution $w_p \in \Hcal$. 

\begin{proof}[Proof of Theorem \ref{thm:auxil}] {\itshape Existence of annealed solution for each $p$}. Fix any $p \in \R^d$. Let $\{w_{p,\delta} \in \Hcal_S\}$ be as obtained in the previous section and let $P_\omega$ be the extension operator of Proposition \ref{prop:ext_Ro}. Then $\{w^{\rm ext}_{p,\delta} := P_\omega w^+_{p,\delta} ~|~ \delta > 0\}$ is a family of functions in $L^2(\Omega,H^1_{\rm loc}(\R^d))\}$. Set $\widetilde{w}^{\rm ext}_{p,\delta} = w^{\rm ext}_{p,\delta}\circ \Phi$. In view of \eqref{eq:Pdef_o}, we check that $\widetilde{w}^{\rm ext}_{p,\delta} = \widetilde{w^{\rm ext}_{p,\delta}}$ and $\widetilde{w}^{\rm ext}_{p,\delta}$ is stationary.
By \eqref{eq:regest}, we have, for any $K \subset \subset \R^d$,
\begin{equation}
\|\nabla \widetilde{w}^{\rm ext}_{p,\delta}\|_{L^2(\Omega,(L^2(K))^d)} \le C(K).
\end{equation}
That is, $\nabla \widetilde{w}^{\rm ext}_{p,\delta}$ is bounded in $L^2(\Omega,(L^2(K))^d)$. Hence, there exists a subsequence that converges weakly in $L^2(\Omega,(L^2(K))^d)$. The same reasoning applies to the family $\{\nabla \widetilde{w}^-_{p,\delta}\}$ and $\{\widetilde{w}^+_{p,\delta} - \widetilde{w}^-_{p,\delta}\}$ as well.

Recall that $L^2(\Omega)$ is separable; by taking a sequence of $K$'s that exhaust $\R^d$, we obtain a vector field $\widetilde{\xi}_p \in L^2(\Omega, (L^2_{\rm loc}(\R^d))^d)$, $\widetilde{\eta}_p \in L^2(\Omega,(L^2_{\rm loc}(\Rdm))^d)$, $\widetilde{\zeta}_p \in L^2(\Omega, L^2_{\rm loc}(\Gamd))$ and a subsequence $\delta_k \to 0$, such that
\begin{equation}\label{e.subcorr}
\begin{aligned}
&\nabla \widetilde{w}^{\rm ext}_{p,\delta_k} \ \longrightarrow \ \widetilde{\xi}_p &\quad  &\text{weakly in }\; L^2(\Omega,L^2_{\rm loc}(\R^d)^d),\\
&\nabla \widetilde{w}^-_{p,\delta_k} \ \longrightarrow \ \widetilde{\eta}_p &\quad  &\text{weakly in }\; L^2(\Omega,L^2(Y^-_k)^d), \text{ for each } k \in \Z^d,\\
&\widetilde{w}^+_{p,\delta_k} - \widetilde{w}^-_{p,\delta} \ \longrightarrow \ \widetilde{\zeta}_p &\quad  &\text{weakly in }\; L^2(\Omega,L^2(\Gamma_k)) \text{ for each } k \in \Z^d.
\end{aligned}
\end{equation}
For notational simplicity, we denote the sequence $\delta_k$ still by $\delta$. It is easy to check that $\widetilde{\xi}_p, \widetilde{\eta}_p$ and $\widetilde{\zeta}_p$ inherit stationarity from the sequence, and $\widetilde{\xi}_p$, $\widetilde{\eta}_p$ remain potential field in the sense that
\begin{equation*}
\int_\Omega \int_{\R^d} \widetilde{\xi}_p(y,\omega) \cdot \nabla \varphi(y,\omega) \, dy d\Pb(\omega) = 0, \quad \quad \text{for all } \varphi(\cdot,\omega) \in C^\infty_0(\R^d).
\end{equation*}
It follows that there exists $\widetilde{w}^{\rm ext}_{p} \in L^2(\Omega,H^1_{\rm loc}(\R^d))$ such that $\nabla \widetilde{w}^{\rm ext}_p = \widetilde{\xi}_{p}$ (note that the label `ext' here is just a notation and does not mean extension). We note \eqref{eq:auxil_avg} is satisfied by $\widetilde{w}^{\rm ext}_{p,\delta}$, and it is preserved by the limit $\widetilde{w}^{\rm ext}_p$.

Set $\widetilde{w}_p^+$ to be the restriction of $\widetilde{w}^{\rm ext}_p$ in $\Rdp$, and it has a trace on $\Gamd$. Set
\begin{equation}\label{e.wmtrace}
\widetilde{w}^-_p := \widetilde{\zeta}_p - \widetilde{w}^+_p, \quad \text{ on each } \Gamma_k, \ k \in \Z^d,
\end{equation}
where $\widetilde{w}^+_p$ is understood as the trace and the above defines a function in $L^2(\Omega,L^2_{\rm loc}(\Gamd))$. Finally, on each $Y^-_k$, by solving a Dirichlet problem, we find $\widetilde{w}^-_p$ on each $Y^-_k$, such that $\nabla \widetilde{w}^-_p = \nabla \widetilde{\eta}_p$ and $\widetilde{w}^-_p$ has trace defined by \eqref{e.wmtrace} on each $\partial Y^-_k$.

Let $\widetilde{w}_p = (\widetilde{w}^+_p,\widetilde{w}^-_p)$, with $\widetilde{w}^+_p$ and $\widetilde{w}^-_p$ defined above. Set $w_p = \widetilde{w}_p \circ \Phi^{-1}$. In \eqref{eq:weakregaux}, for each fixed $\phi \in \Hcal$ with compact support in $\R^d$, pass to limit through the subsequence found above, use \eqref{e.subcorr}, $\widetilde{w}^+_p - \widetilde{w}^-_p = \widetilde{\zeta}_p$, and the relation of changed variable, we get
\begin{equation}
\begin{aligned}
& \E \int_{\Phi(K\cap \Rdp,\omega)} A(\nabla w_{p}^+ + p)\cdot \nabla \phi^+ dy + \E \int_{\Phi(K\cap \Rdm,\omega)}  A(\nabla w_{p}^- + p)\cdot \nabla \phi^- dy \\
+ & \quad + \E \int_{\Phi(K \cap \Gamd,\omega)} (w^+_{p} - w^-_{p})(\phi^+ -\phi^-) d\sigma(y) = 0.
\end{aligned}
\label{eq:weakcorr}
\end{equation}
Note that the third term in \eqref{eq:weakregaux} goes to zero in the limit because of the last inequality in \eqref{eq:regest} and the fact that $K$ is bounded. This proves that $w_p$ is an annealed solution to the auxiliary problem \eqref{eq:auxil}. In view of Proposition \ref{prop:weak}, $w_p(\cdot,\omega)$ is a weak solution to \eqref{eq:auxil} for a.e. $\omega \in \Omega$.

{\em Sublinearity of $w_p$ at infinity.} In view of the relation between $w^+_p$ and $w^-_p$, it suffices to show that $\widetilde{w}_p^{\rm ext}$ is sublinear at infinity. Owing to Lemma \ref{lem:subli} and \eqref{eq:auxil_avg} established earlier, sublinearity of $w_p$ follows once we show that
\begin{equation}\label{e.gradbdd}
\E \|\nabla \widetilde{w}^{\rm ext}_p\|^s_{L^s(Y)} \le C \quad  \text{ for some } \quad s > d.
\end{equation}
To prove this, we appeal to elliptic regularity theory. We apply the method of difference quotient (e.g. see Evans \cite[Section 6.3]{Evans}) in a finite union of open sets that cover the deformed unit cube $\Phi(Y)$. Since $A \in C^1$, $\partial Y^- \in C^2$ and $\Phi \in C^2$ with bounds that are uniform in $\omega$, we check $\|D \nabla w^+_{p}\|_{L^2(\Phi(Y^+,\omega))} \le C\|\nabla w^+_{p}\|_{L^2((2Y)^+,\omega)}$ with some constant $C$ that is uniform in $\omega$. By Sobolev imbedding, we have $\|\nabla \widetilde{w}^+_p\|^s_{L^s(\Phi(Y^+,\omega))} \le C\|\nabla \widetilde{w}^+_p\|_{L^2(\Phi(2Y^+,\omega)}^{s/2}$. This holds for all $s \in (d,4)$, $d = 2,3$. Note that $s/2 \le 2$ and that the first inequality in \eqref{eq:regest} is preserved by the limit $\nabla w_p$, we verify that \eqref{e.gradbdd} holds and that $w^+_p(\cdot,\omega)$ is sublinear for a.e. $\omega \in \Omega$.

{\em Uniqueness} (of $\nabla w_p)$. Suppose there are two solutions. Let $v_p$ be their difference, then $v_p$ satisfies \eqref{eq:auxil} with $p$ replaced by zero.
Integrate this equation against $v_p$ over the deformed cube $\Phi(Q_N,\omega)$ where $Q_N = (-N,N)^d$. We get
\begin{equation*}
\begin{aligned}
\int_{\Phi(Q_N\cap \Rdp,\omega)} &\nabla v_p^+ \cdot A\nabla v_p^+ dx \ + \ \int_{\Phi(Q_N\cap \Rdm,\omega)}  \nabla v_p^- \cdot A \nabla v_p^- dx \\
 &+\ \int_{\Phi(Q_N\cap \Gamd,\omega)} |v^+_p - v^-_p|^2 d\sigma(x)
\ =\ \int_{\partial \Phi(Q_N,\omega)} v_p^+ \nu_x \cdot A \nabla v_p^+ d\sigma(x).
\end{aligned}
\end{equation*}
The integrals on the left can be written as sum of integrals over $\Phi(Y_k,\omega)$ for $k \in \Z^d$ such that $Y_k \subset Q_N$; the number of such cubes are $(2N+1)^d$. For the integral on the right, we observe that $|v^+_p| = o(N)$ by sublinearity proved above; moreover, it can be written as sum of integrals over parts of $\Phi(\partial Y_k, \omega)$ for $k$ such that $\overline{Y}_k$ intersects $\partial Q_N$; the total number is of order $d(2N+1)^{d-1}$. Divide the above equality by $(2N+1)^d$, and change variable in the integrals, we have
\begin{equation*}
\begin{aligned}
&\frac{1}{(2N+1)^d} \left(\int_{Q_N\cap \Rdp} |\nabla \widetilde{v}^+_p|^2 dx \ +\  \int_{Q_N\cap \Rdm} |\nabla \widetilde{v}^-_p|^2 dx \ +\  \int_{Q_N \cap \Gamd} |\widetilde{v}^+_p - \widetilde{v}^-_p|^2 d\widetilde{\sigma}(x)\right)\\
=\ &\frac{o(N)}{2N+1} \frac{1}{d(2N+1)^{d-1}} \sum_{k \in \mathcal{K}_{\partial Q_N}} \| \nabla \widetilde{v}^{\rm ext}_p \|_{L^2(Y_k)}.
\end{aligned}
\end{equation*}
Send $N$ to infinity and use the ergodic theorem; we conclude that
$$
\E \left(\int_{Y^+} |\nabla \widetilde{v}^+_p|^2 dx \ +\  \int_{Y^-} |\nabla \widetilde{v}^-_p|^2 dx \ +\  \int_{\Gamma_0} |\widetilde{v}^+_p - \widetilde{v}^-_p|^2 d\widetilde{\sigma}(x)\right) = 0,
$$
which implies that $\widetilde{v}_p = v_p = C$, i.e. the two solutions are different by a constant. This completes the proof.
\end{proof}

\section{Proof of the Homogenization Result}
\label{sec:proof}
\subsection{Oscillating test functions}

We start with the construction of oscillating test functions. For any $p \in \R^d$, let $(w^+_p,w^-_p) \in \mathcal{H} = L^2(\Omega,H^1_{\rm loc}(\Phi(\Rdp))\times H^1_{\rm loc}(\Phi(\Rdm)))$ be the solution to the auxiliary problem \eqref{eq:auxil} provided by Theorem \ref{thm:auxil}, and let $w^{\rm ext}_p$ be the corresponding extension of $w^+_p$. Define, for $x \in \R^d$,
\begin{equation}
\label{eq:osctest} 
w^\eps_{1p}(x,\omega) =  x\cdot p + \eps w^{\rm ext}_p \left(\frac{x}{\eps},\omega\right), \quad\quad
w^\eps_{2p}(x,\omega) =  x\cdot p + \eps Qw^-_p
\left(\frac{x}{\eps},\omega\right).
\end{equation}
Here and in the sequel, $Q$ denotes the trivial extension operator
which sets $Qf=0$ outside the spatial support of $f$. By scaling the
auxiliary problem, we find 
\begin{equation*}
\left\{
\begin{aligned}
&-\nabla \cdot(A \nabla w^{\eps}_{1p}) = 0 &\ \text{ in } \eps \Phi(\Rdp) &\quad\text{and}&\quad
-\nabla \cdot(A \nabla w^{\eps}_{2p}) = 0 &\ \text{ in } \eps\Phi(\Rdm),\\
&\frac{\partial w^{\eps}_{1p}}{\partial \nu_{A^\eps}} =  \frac{\partial w^{\eps}_{2p}}{\partial \nu_{A^\eps}}
&\ \text{ on } \eps \Phi(\Gamd) &\quad\text{and}&\quad w^{\eps}_{1p} - w^{\eps}_{2p} = \eps \frac{\partial w^{\eps}_{2p}}{\partial \nu_{A^\eps}} &\ \text{ on } \eps\Phi(\Gamd),
\end{aligned}
\right.
\end{equation*}
in the distributional sense a.e.; that is, for almost every $\omega \in \Omega$, for any test function $\varphi = (\varphi^+,\varphi^-) \in H^1_{\rm loc}(\Phi(\Rdp,\omega)) \times H^1_{\rm loc}(\Phi(\Rdm,\omega)) $ with support $K \subset \subset \R^d$,
\begin{equation}
\begin{aligned}
\int_{K\cap \eps\Phi(\Rdp)} A \nabla w^{\eps}_{1p} \cdot \nabla \varphi^+ dx  &+ \int_{K\cap \eps\Phi(\Rdm)} A \nabla w^{\eps}_{2p} \cdot \nabla \varphi^- dx\\
&+ \frac{1}{\eps}\ \int_{K\cap \eps\Phi(\Gamd)}
(w^{\eps}_{1p} - w^{\eps}_{2p}) (\varphi^+- \varphi^-) d\sigma(x) =
0.
\end{aligned}
\label{eq:wep12weak}
\end{equation}

Let $w^{\eps +}_p$ denote the restriction of $w^{\eps}_{1p}$ on $\Phi(\Rdp,\omega)$ and $w^{\eps - }_p$ the restriction of $w^{\eps}_{2p}$ on $\Phi(\Rdm,\omega)$. We have the following facts about $w^\eps_{jp}$, $j=1,2$, and their gradients.

\begin{lemma} \label{lem:osctest} For each $p \in \R^d$, there exists $\widetilde\Omega \in \F$ with $\Pb(\widetilde\Omega) = 1$, such that for any $\omega \in \widetilde\Omega$, and for any bounded open subset $\mathscr{O} \subset \R^d$, we have
\begin{align}
&w^\eps_{1 p} \rightarrow x\cdot p, \quad &&\text{ uniformly in }
\mathscr{O},
\label{eq:limweext}\\
&w^\eps_{2p} \rightarrow x\cdot p, \quad &&\text{ in }
L^2(\mathscr{O}),
\label{eq:limwem}\\
&Q(A^\eps \nabla w^{\eps \pm}_p) \rightharpoonup \frac{1}{\varrho}\ \E \int_{\Phi(Y^\pm,\omega)} A(y,\omega)
\left(p + \nabla w^\pm_p(x,\omega)\right)dx, &&\text{ in }
(L^2(\mathscr{O}))^d.
\label{eq:limgwe}
\end{align}
\end{lemma}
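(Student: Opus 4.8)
The plan is to establish the three convergences on a single full-measure set $\widetilde\Omega$, obtained by intersecting the exceptional sets from Theorem~\ref{thm:auxil} (so that $w_p(\cdot,\omega)$ solves \eqref{eq:auxil}, is sublinear at infinity, and $\nabla\widetilde{w}^{\rm ext}_p$, $\nabla\widetilde{w}^-_p$ are the stationary limiting fields inheriting the bounds \eqref{eq:regest}) with the full-measure sets from Lemma~\ref{lem:gstat} applied to the two stationary fields $g_p,h_p$ introduced below. Recall from the proof of Theorem~\ref{thm:auxil} that, for a.e.\ $\omega$, $\nabla\widetilde{w}^{\rm ext}_p(\cdot,\omega)\in L^s_{\rm loc}(\R^d)$ for some $s>d$; hence by Morrey's embedding and the uniform Lipschitz bound on $\Phi^{-1}$ in \eqref{eq:Psibdd}, $w^{\rm ext}_p(\cdot,\omega)$ has a locally H\"older-continuous, in particular locally bounded, representative, which we fix. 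Also, $w^-_p(\cdot,\omega)\in H^1_{\rm loc}(\Phi(\Rdm,\omega))\subset L^2_{\rm loc}$ for a.e.\ $\omega$.

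\emph{Proof of \eqref{eq:limweext} and \eqref{eq:limwem}.} Both reduce to showing $\eps\,w^{\rm ext}_p(\cdot/\eps,\omega)\to0$ uniformly on $\mathscr O$ and $\eps\,Qw^-_p(\cdot/\eps,\omega)\to0$ in $L^2(\mathscr O)$. Fix $\delta>0$; by sublinearity of $w_p(\cdot,\omega)$ there is $R=R(\omega,\delta)$ with $|w^{\rm ext}_p(y,\omega)|\le\delta|y|$ and $|w^-_p(y,\omega)|\le\delta|y|$ for $|y|\ge R$. On $\{x\in\mathscr O:|x|\ge\eps R\}$ this yields $\eps|w^{\rm ext}_p(x/\eps,\omega)|\le\delta|x|\le\delta\,\mathrm{diam}(\mathscr O)$ and $\eps^2|Qw^-_p(x/\eps,\omega)|^2\le\delta^2|x|^2$, so the $L^2(\mathscr O)$-contribution of the second term from this set is at most $\delta^2\int_{\mathscr O}|x|^2\,dx$. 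On $\{|x|<\eps R\}$ the fixed-$R$ (and $\omega$-dependent but finite) quantities $\eps\sup_{|y|\le R}|w^{\rm ext}_p(y,\omega)|$ and $\eps^{2+d}\int_{|y|<R}|Qw^-_p(y,\omega)|^2\,dy$ tend to $0$ as $\eps\to0$. Letting $\eps\to0$ and then $\delta\to0$ gives \eqref{eq:limweext} and \eqref{eq:limwem}.

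\emph{Proof of \eqref{eq:limgwe}.} The key is to recognize each flux as the $\eps$-rescaling of a $\Z^d$-stationary field post-composed with $\Psi(\cdot,\omega)$, so that Lemma~\ref{lem:gstat} applies directly. Using $A^\eps(x,\omega)=A(x/\eps,\omega)$ with $A=\widetilde A\circ\Psi$, the chain rule $\nabla w^{\rm ext}_p(\Phi(z,\omega),\omega)=(\nabla\Phi(z,\omega))^{-T}\nabla\widetilde{w}^{\rm ext}_p(z,\omega)$, $\nabla w^{\eps+}_p(x,\omega)=p+\nabla w^{\rm ext}_p(x/\eps,\omega)$ on $\eps\Phi(\Rdp,\omega)$, and $\chi_{\eps\Phi(\Rdp,\omega)}(x)=\chi_{\Rdp}(\Psi(x/\eps,\omega))$, one gets $Q(A^\eps\nabla w^{\eps+}_p)(x,\omega)=g_p(\Psi(x/\eps,\omega),\omega)$, and likewise $Q(A^\eps\nabla w^{\eps-}_p)(x,\omega)=h_p(\Psi(x/\eps,\omega),\omega)$, where
\begin{gather*}
g_p(z,\omega):=\chi_{\Rdp}(z)\,\widetilde A(z)\Big(p+\big(\nabla\Phi(z,\omega)\big)^{-T}\nabla\widetilde{w}^{\rm ext}_p(z,\omega)\Big),\\
h_p(z,\omega):=\chi_{\Rdm}(z)\,\widetilde A(z)\Big(p+\big(\nabla\Phi(z,\omega)\big)^{-T}\nabla\widetilde{w}^-_p(z,\omega)\Big).
\end{gather*}
Periodicity of $\widetilde A$ and of $\chi_{\Rdpm}$ together with stationarity of $\nabla\Phi$, $\nabla\widetilde{w}^{\rm ext}_p$, $\nabla\widetilde{w}^-_p$ show $g_p,h_p$ are stationary in the sense of \eqref{eq:stati}; the uniform ellipticity and boundedness assumptions (A2), (T2), (T3) and the bound \eqref{eq:regest} (preserved in the limit by weak lower semicontinuity) give $g_p,h_p\in L^2(\Omega,L^2_{\rm loc}(\R^d))$. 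Lemma~\ref{lem:gstat}(i) with exponent $2$ then yields weak $L^2_{\rm loc}$-convergence of these fluxes to $\tfrac1\varrho\,\E\int_{\Phi(Y,\cdot)}g_p\circ\Psi(y,\cdot)\,dy$, respectively with $h_p$. Unwinding the change of variables, $g_p\circ\Psi(y,\omega)=\chi_{\Phi(\Rdp,\omega)}(y)\,A(y,\omega)\big(p+\nabla w^+_p(y,\omega)\big)$ and $\Phi(Y,\omega)\cap\Phi(\Rdp,\omega)=\Phi(Y^+,\omega)$, which identifies the limit with the right-hand side of \eqref{eq:limgwe}; the $-$ case is identical.

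\emph{Main obstacle.} The delicate part is \eqref{eq:limweext} and \eqref{eq:limwem}: plain sublinearity gives control of $w_p$ only on $\{|x|\gtrsim\eps R\}$, so one must separately ensure that the auxiliary solution is genuinely locally bounded ($w^+_p$, via the interior elliptic regularity already invoked in the proof of Theorem~\ref{thm:auxil}) or at least locally $L^2$ ($w^-_p$), for a.e.\ $\omega$, and then patch this near-origin control with the sublinear decay at infinity; the bookkeeping of which sets the various bounds live on has to be done with some care. By contrast, \eqref{eq:limgwe} is routine once the correct stationary fields $g_p$, $h_p$ are identified and Lemma~\ref{lem:gstat} is in hand.
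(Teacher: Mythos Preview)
Your treatments of \eqref{eq:limweext} and \eqref{eq:limgwe} coincide with the paper's: both use sublinearity of $w^{\rm ext}_p$ for the first, and both recognize the flux as $g\circ\Psi(\cdot/\eps)$ with $g$ stationary and invoke Lemma~\ref{lem:gstat} for the third.

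For \eqref{eq:limwem}, however, you and the paper diverge. You argue directly from pointwise sublinearity of $w^-_p$: you split $\mathscr O$ according to $|x|\gtrless \eps R$ and use $|w^-_p(y)|\le\delta|y|$ on the outer set. The paper instead writes
\[
\eps\, Qw^-_p\!\left(\tfrac{x}{\eps}\right)=\eps\bigl(w^-_p-w^{\rm ext}_p\bigr)\!\left(\tfrac{x}{\eps}\right)\chi_{\eps\Phi(\Rdm)}+\eps\, w^{\rm ext}_p\!\left(\tfrac{x}{\eps}\right)\chi_{\eps\Phi(\Rdm)},
\]
kills the second term by \eqref{eq:limweext}, and for the first uses the cell-wise estimate $\|f\|_{L^2(Y^-)}\le C\|\nabla f\|_{L^2(Y^-)}+C\|f\|_{L^2(\partial Y^-)}$ applied to $\widetilde{w}^{\rm ext}_p-\widetilde{w}^-_p$, followed by the ergodic theorem; the spatial average of the stationary integrands converges to a finite constant, so the $L^2$ norm is $O(\eps)$.

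Your route is shorter, but it leans on a \emph{pointwise} bound $|w^-_p(y)|\le\delta|y|$ for all $|y|\ge R$, which sits uneasily with the regularity you yourself record for $w^-_p$ (only $H^1_{\rm loc}\subset L^2_{\rm loc}$). Theorem~\ref{thm:auxil} does assert sublinearity of $w_p$, but its proof only establishes this for $w^{\rm ext}_p$ via Lemma~\ref{lem:subli}, and the passage ``in view of the relation between $w^+_p$ and $w^-_p$, it suffices\dots'' is not spelled out; Lemma~\ref{lem:subli} cannot be applied to $Qw^-_p$ because of the interface jumps. So your argument inherits a gap from the theorem's proof rather than closing it. The paper's decomposition avoids this entirely: it never needs pointwise control of $w^-_p$, only the $L^2$ stationarity of $\widetilde{w}^+_p-\widetilde{w}^-_p$ on $\Gamma_0$ and of $\nabla\widetilde{w}^{\rm ext}_p-\nabla\widetilde{w}^-_p$ on $Y^-$, both of which are secured by \eqref{eq:regest}. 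If you want to keep your approach, you should either upgrade $w^-_p$ to $W^{1,s}_{\rm loc}$ with $s>d$ on each $\Phi(Y^-_k)$ via interior/boundary elliptic regularity and then argue pointwise sublinearity from that of $w^{\rm ext}_p$ together with uniform control of the jumps, or recast your outer-region estimate in an averaged ($L^2$) form.
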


\begin{proof} For the first result, recall that in step two of the proof of Theorem \ref{thm:auxil}, we proved $w^{\rm ext}_p$ is sublinear a.e. in $\Omega$. In particular, there exists a subset $\Omega_1$ of $\Omega$ with full measure such that, for any $\omega \in \Omega_1$ and for any compact set $\mathscr{O}\subset \R^d$, $d = 2,3$, 
\begin{equation*}
\lim_{\eps \to 0} \ \sup_{x\in \mathscr{O}} \ \left\lvert \eps w^{\rm ext} \left(\frac{x}{\eps}, \omega\right) \right\rvert = 0.
\end{equation*}
This is precisely \eqref{eq:limweext}.

For the second convergence result, we first write
$$
w^\eps_{2p} - x\cdot p = \eps \left(w^-_p \left(\frac{x}{\eps}\right) - w^{\rm
ext}_p \left(\frac{x}{\eps}\right) \right) \chi_{\eps\Phi(\Rdm)} + \eps
w^{\rm ext}_p\left(\frac{x}{\eps}\right) \chi_{\eps\Phi(\Rdm)}.
$$
In view of \eqref{eq:limweext}, the second item on the right converges uniformly in $\mathscr{O}$ to zero. Let $J_\eps$ denotes the $L^2(\mathscr{O})$ norm of the first item, then it suffices to show that $J_\eps$ converges to zero. Let $\mathcal{I}_\eps(\mathscr{O}) = \{k \in \Z^d ~|~ \eps \Phi(Y_k) \cap \mathscr{O} \ne \emptyset\}$. Then $\mathscr{O} \subset \cup_{k \in \mathcal{I}_\eps} \eps \Phi(Y_k)$ and $|\mathcal{I}_\eps| \lesssim C(\mathscr{O}) \eps^{-d}$, where the constant $C$ can be chosen to be independent of $\eps$ and $\omega$. We have, in view of (A2), (A3) and \eqref{eq:Psibdd},
\begin{equation*}
\begin{aligned}
J^2_\eps &\le \sum_{n \in \mathcal{I}_\eps} \int_{\eps\Phi(Y^-_n)} \eps^2
 \left|w^{\rm ext}_p \left(\frac{x}{\eps}\right) - w^-_p \left(\frac{x}{\eps}\right) \right|^2 dx = \eps^{d+2} \sum_{n
  \in \mathcal{I}_\eps} \int_{\Phi(Y^-_n)} \left|w^{\rm ext}_p(x) - w^-_p(x)\right|^2 dx\\
&\le C\eps^{d+2} \sum_{n \in \mathcal{I}_\eps} \int_{Y^-_n}
\left|\widetilde{w}^{\rm ext}_p(y) -
\widetilde{w}^-_p(y)\right|^2 dy.
\end{aligned}
\end{equation*}
Using the estimate that $\|f\|_{L^2(Y^-)} \le C\|\nabla f\|_{L^2(Y^-)} + C\|f\|_{L^2(\partial Y^-)}$, we have
\begin{equation*}
J^2_\eps \le C\eps^2 \left[\frac{1}{|\mathcal{I}_\eps|} \sum_{n \in
\mathcal{I}_\eps} \left(\int_{\Gamma_n}
\left|\widetilde{w}^+_p(y) - \widetilde{w}^-_p(y)\right|^2
d\sigma(y) + \int_{Y^-_n} \left| \nabla \widetilde{w}^{\rm
ext}_p(y) - \nabla \widetilde{w}^-_p(y)\right|^2
dy \right)\right].
\end{equation*}
Note that the integrands above are stationary and the item inside
the bracket is ready for applying ergodic theorem, which yields: there exists $\Omega_2 \in \F$, $\Pb(\Omega_2) = 1$, and for all $\omega \in \Omega_2$, the term in the bracket above converges to
\begin{equation*}
\E \int_{\Gamma_0} |\widetilde{w}^+_p - \widetilde{w}^-_p|^2(y)
d\sigma(y) + \E \int_{Y^-} |\nabla \widetilde{w}^{\rm ext}_p - \nabla
\widetilde{w}^-_p|^2 dy.
\end{equation*}
Since the bounds \eqref{eq:regest} are preserved by the limits $(w^+_p,w^-_p)$, we find $\lim_{\eps \to 0} J_\eps = 0$, from which \eqref{eq:limwem} follows.

For the third convergence result, we observe that
\begin{equation*}
Q(A^\eps \nabla w^{\eps \pm}_p) (x,\omega)= \left(\left(\chi_{\R_d^\pm} \widetilde{A} \left\{p + \left((D\Phi)^{-1}\right)^t \nabla
\widetilde{w}^\pm_p\right\} \right) \circ \Phi^{-1}\right) \left(\frac{x}{\eps},\omega\right).
\end{equation*}
We note that the functions $\chi_{\R_d^\pm} \widetilde{A} \left\{p + \left((D\Phi)^{-1}\right)^t \nabla
\widetilde{w}^\pm_p\right\}$ are stationary and belong to $L^2(\Omega,L^2_{\rm loc}(\R^d))$. In view of Lemma \ref{lem:gstat}, we conclude that there exists $\Omega_3 \in \F$ with full measure such that \eqref{eq:limgwe} holds for all $\omega \in \Omega_3$.

Set $\widetilde{\Omega} = \Omega_1 \cap \Omega_2 \cap \Omega_3$. Then we complete the proof of the lemma.
\end{proof}

\subsubsection{Proof of the homogenization theorem}

We follow the standard method of oscillating test functions. Let $\Omega_*$ be the set with full measure such that \eqref{eq:wep12weak} and the conclusion in Lemma \ref{lem:osctest} hold for the unit vectors $p \in \{e_k ~|~ k = 1,\cdots, d\}$. The strategy of proof is as follows: In the first step, we recall uniform in $\omega$ and $\eps$ energy estimates for the solution $\ueps$ to the problem \eqref{eq:aniso} and, for each fixed $\omega \in \Omega_*$, we extract a subsequence, along which $u^{\rm ext}_\eps$ converges weakly in $H^1(D)$ to some $u_0$ and the trivially extended flux $Q(A^\eps\nabla \uepsp)$ and $Q(A^\eps\nabla \uepsm)$ converge weakly in $(L^2(D))^d$; a priori, both the subsequence and the limit depend on $\omega$. In the second step, we apply the oscillating test function $(\varphi w^\eps_{1p}, \varphi w^\eps_{2p})$ to \eqref{eq:weakaux_pw} and use the test function $(\varphi \uepsp,\varphi\uepsm)$ in  \eqref{eq:wep12weak}. Passing to limits, we obtain the equation with proper boundary conditions satisfied by $u_0$. It turns out that $u_0$ solves a deterministic problem which has unique solution. As a result, the whole sequence $\ueps$ converges to $u_0$, establishing the desired result. As a by-product, we also prove that the trivial extension $Q\uepsm$ converges weakly in $L^2(D)$ to $\theta u_0$ for some constant $\theta$ strictly less than one.

\bigskip

\begin{proof}[Proof of Theorem \ref{thm:homog}] {\itshape Step 1. A converging subsequence.} Fix any $\omega$ in the set $\Omega_*$ defined above. For notational simplicity, we omit the dependence of functions on $\omega$ below. From \eqref{eq:BddExt} and \eqref{eq:GradientEst} we find, for some $C$ independent of $\eps$ and $\omega$,
\begin{equation*}
\|\ueps^{\rm ext}\|_{H^1(D)} + \|Q(A^\eps\nabla \ueps^+)\|_{(L^2(D))^d}
+ \|Q(A^\eps\nabla \ueps^-)\|_{(L^2(D))^d} \le C.
\end{equation*}
As a result, there exist $u_0 \in H^1(D)$, vector fields $\xi_1, \xi_2 \in (L^2(D))^d$, and a subsequence of $\ueps$ still indexed by $\eps$, such that
\begin{equation}
\begin{aligned}
&\ueps^{\rm ext} \rightharpoonup u_0 \text{ weakly in } H^1(D), \quad &&Q(A^\eps\nabla \ueps^+) \rightharpoonup \xi_1 \text{ weakly in }
(L^2(D))^d;\\
&\ueps^{\rm ext} \rightarrow u_0 \text{ strongly in }
L^2(D), \quad &&Q(A^\eps\nabla \ueps^-) \rightharpoonup \xi_2 \text{
weakly in } (L^2(D))^d.
\end{aligned}
\label{eq:usubseq}
\end{equation}
In the proof of Proposition \ref{cor:extlim}, we also proved that
\begin{equation}
\ueps^{\rm ext} \chi_{\eps}^- - Qu^-_\eps \rightarrow 0 \text{
strongly in } L^2(D), \label{eq:usubseq1}
\end{equation}
so in fact $\ueps = \ueps^{\rm ext} + (-\ueps^{\rm ext} \chi_{\eps}^- + Qu^-_\eps)$ converges to $u_0$ strongly in $L^2(D)$. We note that, at this stage, the limiting functions $u_0, \xi_1, \xi_2$ and the subsequence all depend on the fixed $\omega$. At the end of step 2, however, it will be evident that the whole sequence converges and the limits are deterministic.

\smallskip

{\itshape Step 2: Equation for $u_0$.} Fix an arbitrary test function $\varphi \in
C^\infty_0(D)$ with support $K \subset \subset D$. Take $(\varphi \chi_{\eps}^+,
\varphi\chi_{\eps}^-)$ as the test function in \eqref{eq:rpdeweak}.
Then the interface term disappears and
\begin{equation*}
\int_{D} Q(A^\eps \nabla u^+_\eps) \cdot {\nabla \varphi} dx +
\int_{D} Q(A^\eps \nabla u^-_\eps) \cdot {\nabla \varphi} dx =
\int_D f \varphi dx.
\end{equation*}
Passing to the limit $\eps \to 0$ along the chosen subsequence, one
finds
\begin{equation}
\int_D (\xi_1 + \xi_2) \cdot {\nabla \varphi} dx =
\int_D f\varphi dx. \label{eq:xilim}
\end{equation}
In other words, $-\mathrm{div}\,(\xi_1 + \xi_2) = f$ in the distributional sense.

Next, recall the definition of $\Depsm, \Gameps, K_\eps$ and $E_\eps$ in \eqref{eq:Depsdef} and \eqref{eq:KEdef}. For $\eps$ sufficiently small, the function $\varphi$ is compactly supported in $E_\eps$. In particular, we have $\eps\Phi(\Gamd) \cap K = \Gameps \cap K$ where $K$ is the support of $\varphi$. Let $\{e_k\,:\, k = 1,\cdots, d\}$ denote the standard basis for $\R^d$. Let $w^\eps_{1e_k}$ and $w^\eps_{2e_k}$ be as defined in \eqref{eq:osctest}. Take $(\varphi {\uepsp}, \varphi {\uepsm})$ as the test function in pointwise version of \eqref{eq:wep12weak}; then for each $k = 1,2,\cdots,d$, we get
\begin{equation*}
\begin{aligned}
\int_D Q(A^\eps \nabla w^{\eps+}_{e_k}) \cdot \nabla({\varphi}
\uepsp) dx \; + \; &\int_D Q(A^\eps \nabla w^{\eps-}_{e_k}) \cdot
\nabla({\varphi} \uepsm) dx\\
\; + \; &\frac{1}{\eps}
\int_{\Gameps} (w^\eps_{1e_k} - w^\eps_{2e_k})
{\varphi}(\uepsp - \uepsm) ds \; = \; 0.
\end{aligned}
\end{equation*}
Similarly, take $(\varphi {w^\eps_{1e_k}}, \varphi {w^\eps_{2e_k}})$ as the test function in \eqref{eq:rpdeweak}; we get
\begin{equation*}
\begin{aligned}
\int_D Q(A^\eps \nabla u^+_\eps) \cdot \nabla({\varphi}
w^\eps_{1e_k}) \; + \; &\int_D Q(A^\eps \nabla u^-_\eps) \cdot
\nabla({\varphi} w^\eps_{2e_k})\\
\; + \; \frac{1}{\eps}&
\int_{\Gameps} (\uepsp - \uepsm){\varphi}(w^\eps_{1e_k}
- w^\eps_{2e_k}) \; = \; \int_D f\varphi (\chi^+_\eps w^\eps_{1e_k} + \chi^-_\eps w^\eps_{2e_k}).
\end{aligned}
\end{equation*}
Subtract the two equalities above. Then the interface
terms cancel out; in view of the definitions \eqref{eq:osctest}, the terms in which the derivative does not land on $\varphi$ also cancel out. We obtain
\begin{equation*}
\begin{aligned}
\int_D &\left[ Q(A^\eps \nabla w^{\eps+}_{e_k}) + Q(A^\eps \nabla w^{\eps-}_{e_k})\right] \cdot {\nabla \varphi} \ueps^{\rm ext} dx 
\ +  \int_D Q(A^\eps \nabla w^{\eps-}_{e_k}) \cdot {\nabla \varphi} (Q\ueps^- - \ueps^{\rm ext}\chi_{\eps}^-) dx\\
- &\int_D \left[ w^\eps_{1e_k} Q(A^\eps \nabla u^{+}_{\eps}) + w^\eps_{2e_k} Q(A^\eps \nabla u^{-}_{\eps})\right] \cdot {\nabla \varphi} dx  \ + \int_D f\varphi (\chi^+_\eps w^\eps_{1e_k} + \chi^-_\eps w^\eps_{2e_k}) \ =\ 0
\end{aligned}
\end{equation*}
In view of \eqref{eq:limgwe}, \eqref{eq:limweext},
\eqref{eq:limwem}, \eqref{eq:usubseq} and \eqref{eq:usubseq1}, we
observe that each of integrand in the first three integrals is a product of a strong
converging term with a weak converging one. Similarly, we show in step 3 below that $\chi^-_\eps$ (respectively $\chi^+_\eps$) converges weakly in $L^2$ to a constant $\theta \in (0,1)$ (and $1-\theta$); so the integrand of the last integral also converges. Pass to the limit and let $\eta_{1e_k}$ and $\eta_{2e_k}$ be the limit of $Q(A^\eps \nabla w^{\eps +}_{e_k})$ and $Q(A^\eps \nabla w^{\eps -}_{e_k})$ as in \eqref{eq:limgwe}; we have
\begin{equation}
\int_D (\eta_{1e_k} + \eta_{2e_k}) u_0 \cdot { \nabla
\varphi} dx - \int_D  x_k (\xi_1 + \xi_2) \cdot
{\nabla \varphi} dx + \int_D f\varphi x_k dx = 0.
\label{eq:xieta}
\end{equation}
For the first term, using the definitions of $\eta_{1e_k}$, $\eta_{2e_k}$ and $A^0$ in \eqref{eq:Abardef}, we have
\begin{equation*}
\begin{aligned}
(\eta_{1e_k} + \eta_{2e_k}) \cdot \nabla\varphi &= \sum_{\ell =1}^d \frac{1}{\varrho} \E\left( \int_{\Phi(Y^+)} a_{\ell k}(e_k + \nabla w^+_{e_k}) +  \int_{\Phi(Y^-)} a_{\ell k}(e_k + \nabla w^-_{e_k})\right)\frac{\partial \varphi}{\partial x_{\ell}}\\
&= \sum_{\ell = 1}^d a^0_{k\ell} \frac{\partial \varphi}{\partial x_{\ell}} = e_k \cdot A^0 \nabla \varphi.
\end{aligned}
\end{equation*}
For the second and third items in \eqref{eq:xieta}, we apply \eqref{eq:xilim} with the function $\varphi x_k$ to combine the terms. This yields
\begin{equation*}
\int_D (u_0 e_k \cdot A^0) \cdot \nabla \varphi dx = - \int_D e_k \cdot (\xi_1 + \xi_2) \varphi dx,
\end{equation*}
which yields that $(\xi_1 + \xi_2)\cdot e_k =  \mathrm{div} (u_0 e_k \cdot A^0) = e_k \cdot A^0 \nabla u_0$. Therefore, the vector field $\xi_1 + \xi_2$ coincides with $A^0 \nabla u_0$. In light of \eqref{eq:xilim}, we have
\begin{equation}\label{eq:hom_eq}
- \nabla \cdot (A^0 \nabla u_0) = f \quad \quad \text{ in } D
\end{equation}
in the distributional sense.

Next we consider the boundary condition that is satisfied by $u_0$. Recall that $u^{\rm ext}_\eps \rightharpoonup u_0$ weakly in $H^1(D)$, $u^{\rm ext}_\eps \vert_{\partial D} = 0$ for all $\eps$, and that the trace operator from $H^1(D)$ to $L^2(\partial D)$ is continuous with respect to the weak topology. Therefore,
\begin{equation}\label{eq:hom_bc}
u_0 = 0 \quad\quad \text{ on } \partial D.
\end{equation}
Combining \eqref{eq:hom_eq} and \eqref{eq:hom_bc} together, we conclude that $u_0 \in H^1_0(D)$ and it solves the deterministic problem \eqref{eq:homog}. Finally, provided that $A^0$ is uniformly elliptic which we prove in Section \ref{sec:discussion}, it is obvious that \eqref{eq:homog} has a unique solution in $H^1_0(D)$. As a result, the whole sequence $\{u^{\rm ext}_\eps\}$ converge strongly in $L^2(D)$ and weakly in $H^1_0(D)$ to $u_0$, the unique solution to \eqref{eq:homog}. Items (i), (ii) and (iv) of Theorem \ref{thm:homog} are proved.

\smallskip

{\itshape Step 3: Convergence of $Q\uepsm$.} We can write
$Q\uepsm$ as $\ueps^{\rm ext} \chi_{\eps}^- + (Q\uepsm -
\ueps^{\rm ext} \chi_{\eps}^-)$ where $\chi_{\eps}^-$ is the indicator function of $D_\eps^-$. Due to \eqref{eq:usubseq1} and
the fact that $\ueps^{\rm ext}$ converges strongly to $u_0$, we
only need to verify that $\chi_{\eps}^-$ converges weakly in $L^2(D)$ to
$\theta$. For this purpose, fix an arbitrary open set $K \subset \subset D$. Then for sufficiently small
$\eps$, $K$ lies in $E_\eps$ defined in
\eqref{eq:KEdef}. We have
\begin{equation*}
\int_K \chi_\eps^- dx = \int_{K\cap \eps \Phi(\Rdm)} dx
= \int_K \chi_{\eps \Phi(\Rdm)}(x) dx.
\end{equation*}
We observe that $x \in \eps\Phi(\Rdm)$ if and only if $\Phi^{-1}\left(\frac{x}{\eps}\right) \in \Rdm$, which yields
\begin{equation*}
\chi_{\eps \Phi(\Rdm,\omega)}(x) = \chi_{\Rdm} \left(\Phi^{-1}\left(\frac{x}{\eps}, \omega\right)\right),
\end{equation*}
and apparently $\chi_{\Rdm}(z)$ is periodic and hence stationary, and it is uniformly bounded. By Lemma \ref{lem:gstat}, the above function converges in $L^\infty$ weak-$*$ topology. More precisely, in view of the definitions in \eqref{eq:rho_theta}, we have for almost all $\omega \in \Omega$,
\begin{equation}
\chi_{\eps \Phi(\Rdm,\omega)} \xrightarrow[\eps \to 0]{L^\infty \text{ weak-}*} \frac{1}{\varrho} \E \int_{\Phi(Y^-)} dx  = \theta.
\end{equation}
Upon redefining $\Omega_*$ by intersection, we will assume that the above is valid for all $\omega \in \Omega_*$ that was chosen at the beginning of step one. As a result, we have that
\begin{equation*}
\lim_{\eps \to 0} \int_{K} \chi^-_\eps(x,\omega) dx =  \int_K \theta\ dx = \theta |K|, \quad\quad \forall K \subset \subset D.
\end{equation*}
By invoking the density of simple functions in $L^2(D)$, we conclude that $\chi^-_\eps$ converges weakly to $\theta$. Hence $Q\uepsm$ converges weakly in $L^2(D)$ to $\theta u_0$. This completes the proof of Theorem \ref{thm:homog}.
\end{proof}

\section{Further Discussions}
\label{sec:discussion}

We first show that the homogenized coefficient $A^0$ defined by \eqref{eq:Abardef} is uniformly elliptic. $A^0$ is clearly bounded from above, so we concentrate on the coercivity of $A^0$. For any vector $\xi \in \R^d$, by the definition of $A^0$ and the linearity of $p \mapsto w_p$, we have
\begin{equation*}
a^0_{ij}\xi_i \xi_j = \frac{1}{\varrho}\ \E\left( \int_{\Phi(Y^+,\omega)} \xi \cdot A(\xi+ \nabla w^+_{\xi}) dx + \int_{\Phi(Y^-,\omega)} \xi \cdot A (\xi + \nabla w^-_{\xi}) dx\right).
\end{equation*}
Take $p = \xi$ in the auxiliary problem \eqref{eq:auxil} and take $w_\xi$ as the test function. By an argument that is similar to the uniqueness step in the proof of Theorem \ref{thm:homog}, we verify that
\begin{equation*}
\E\left( \int_{\Phi(Y^+)} \nabla w^+_{\xi} \cdot A(\xi+ \nabla w^+_{\xi}) + \int_{\Phi(Y^-)} \nabla w^-_{\xi} \cdot A (\xi + \nabla w^-_{\xi}) + \int_{\Phi(\Gamma_0)} |w^+_\xi - w^-_\xi|^2 \right) = 0,
\end{equation*}
which yields
\begin{equation*}
\begin{aligned}
a^0_{ij}\xi_i \xi_j = & \frac{1}{\varrho} \ \E \int_{\Phi(\Gamma_0,\omega)} | w^+_{\xi} - w^-_{\xi}|^2 d\sigma(x) + \frac{1}{\varrho}\ \E \int_{\Phi(Y^+,\omega)} (\xi + \nabla w^+_{\xi}) \cdot A(\xi+ \nabla w^+_{\xi}) dx\\
&\ +   \frac{1}{\varrho} \E \int_{\Phi(Y^-,\omega)} (\xi + \nabla w^-_{\xi}) \cdot A (\xi + \nabla w^-_{\xi}) dx \, \ge 0.
\end{aligned}
\end{equation*}
This shows that $a^0_{ij}$ is positive semidefinite. Further, the inequality above becomes equality if and only if $w_\xi$ has no jump across $\Gamma_0$ and $\nabla w_{\xi} = -\xi$. Then $\nabla \widetilde{w}_{\xi} = -(D\Phi)^t \xi$. Recall that the integral of $\nabla \widetilde{w}_{\xi}$ over the unit cube has mean zero and $D\Phi$ is non-degenerate; this forces $\xi$ to be zero. It follows that $A^0$ is uniformly elliptic.

Our analysis applies to several variations of the problem \eqref{eq:aniso}. First, like in \cite{Monsur}, instead of assuming that the materials across the interfaces are the same, we may consider two different materials. This amounts to using two different elliptic coefficients $A^1$ and $A^2$ in \eqref{eq:perio}. Secondly, similar to \cite{AGGJS13}, rather than considering the Dirichlet boundary condition at $\partial D$, we may treat also Neumann or Robin type boundary conditions, and we may consider non-homogeneous boundary conditions as well. Finally, if the underlying application is not in biology, the modification near the boundary $\partial D$ at the end of Section \ref{sec:diffe} is not necessary, since we see already that in the homogenization proof we only need to take test functions that are compactly supported in $D$. These claims can be rigorously justified by examining our analysis and making slight modifications.

We conclude this paper by some interesting questions beyond homogenization which are out of the scope of this paper. The current article concerns only the homogenization of \eqref{eq:aniso}, and it is natural to ask about the convergence rate. Such quantitative estimates in stochastic homogenization is much more difficult, but there have been important progresses in several situations, e.g. \cite{CSW05,GO11,GNO13,ACS14}. Once the convergence rate is clear, one may investigate further the detailed structures of the mean of the error and the distributions of the random error, and those in numerical homogenization schemes; see e.g. \cite{FOP82,Bal08,BGMP08,BGGJ12}; see also \cite{BJ11,BJ14}.

\medskip

{\bf Acknowledgement.} The author is grateful to Habib Ammari and Josselin Garnier for suggesting the problem, for their encouragement and for helpful discussions on stochastic homogenization. He would also like to acknowledge stimulating discussions with Christophe Prange on the second step in the proof of Lemma \ref{lem:regul}. We thank the referee for useful suggestions that help to improve the presentation of the paper.
\medskip

\appendix

\section{Appendix}

\subsection{Surface integrals under diffeomorphisms}

We record a geometric fact which concerns surface integrals under a diffeomorphism of the ambient space. 

\begin{proposition}\label{surface-cov}
Let $\Phi: \R^n \to \R^n$ be an orientation preserving diffeomorphism and its inverse is denoted by $\Psi$. Let $\widetilde{S}$ be an $n-1$ dimensional smooth surface given by $\widetilde{S} = g^{-1}\{0\}$ for some smooth function $\widetilde{g} : \R^n \to \R$. Let $S = \Phi(\widetilde{S})$ be the image of $S$ under $\Phi$. Denote by $\nu_y$ and $\widetilde{\nu}_x$ the outward unit normal vector of $S$ at $y$ and that of $\widetilde{S}$ at $x$. We have
\begin{equation}\label{eq:geo-f1}
\widetilde{\nu}_x = \frac{\|\nabla g(\Phi(x))\|}{\|\left(D\Phi(x)\right)^t\nabla g(\Phi(x))\|} \left(D\Phi(x)\right)^{t} \nu_y,
\end{equation}
where $(D\Phi)^t$ is the transpose of the Jacobian matrix.

Let $d\sigma(y)$ and $d\widetilde{\sigma}(x)$, where $y = \Phi(x)$, denote the surface measures on $S$ and $\widetilde{S}$. Then for any integrable function $f$ on $S$, with $\widetilde{f}$ denoting $f\circ \Phi$, we have
\begin{equation}\label{eq:geo-f2}
\int_{S} f(y) d\sigma(y) = \int_{\widetilde{S}} \widetilde{f}(x) \frac{\|\left(D\Phi(x)^{-1}\right)^t \nabla_x \widetilde{g}(x)\| \det(D\Phi)(x)}{\|\nabla_x \widetilde{g}(x)\|} d\sigma(x).
\end{equation}
\end{proposition}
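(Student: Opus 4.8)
The statement splits into the normal transformation rule \eqref{eq:geo-f1} and the surface--measure change of variables \eqref{eq:geo-f2}, and the latter boils down to a single finite--dimensional linear algebra identity. I would first dispose of \eqref{eq:geo-f1}. Since $S=\Phi(\widetilde S)$ and $\widetilde S=\widetilde g^{-1}\{0\}$, the surface $S$ is the zero set of $g:=\widetilde g\circ\Psi$; differentiating the identity $g\circ\Phi=\widetilde g$ gives $(D\Phi(x))^{t}\,\nabla g(\Phi(x))=\nabla\widetilde g(x)$ for every $x\in\widetilde S$. The outward unit normals are $\widetilde\nu_x=\nabla\widetilde g(x)/\|\nabla\widetilde g(x)\|$ and $\nu_y=\nabla g(y)/\|\nabla g(y)\|$ at $y=\Phi(x)$, and the two choices of ``outward'' side agree because $\Phi$ preserves orientation and carries $\{\widetilde g<0\}$ onto $\{g<0\}$. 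Writing $\nabla g(\Phi(x))=\|\nabla g(\Phi(x))\|\,\nu_y$ in the differentiated identity and normalising the resulting vector yields \eqref{eq:geo-f1} at once.

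For \eqref{eq:geo-f2}, the restriction $\Phi|_{\widetilde S}\colon\widetilde S\to S$ is a diffeomorphism of $(n-1)$--manifolds, so by the area formula (equivalently, by a partition of unity subordinate to coordinate charts $\psi\colon U\subset\R^{n-1}\to\widetilde S$, with $\Phi\circ\psi$ charting the corresponding patch of $S$) it suffices to show that the tangential Jacobian $J(x)$ of the linear map $D\Phi(x)\colon T_x\widetilde S\to T_{\Phi(x)}S$ equals (writing $B^{-t}:=(B^{t})^{-1}$)
\[
J(x)=\det(D\Phi(x))\,\frac{\|(D\Phi(x))^{-t}\nabla\widetilde g(x)\|}{\|\nabla\widetilde g(x)\|}.
\]
In chart form, $J(x)=\sqrt{\det((BP)^{t}BP)\big/\det(P^{t}P)}$ with $B=D\Phi(\psi(u))$ invertible, $\det B>0$, and $P=D\psi(u)$ of full rank; so the whole content is the linear algebra claim that $\sqrt{\det((BP)^{t}BP)}=\det(B)\,\|B^{-t}\nu\|\,\sqrt{\det(P^{t}P)}$, where $\nu$ is the unit normal to the column space of $P$. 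Substituting $\nu=\widetilde\nu_x=\nabla\widetilde g(x)/\|\nabla\widetilde g(x)\|$ then gives the displayed $J(x)$, and feeding this into the area formula (or integrating over $U$ and summing the charts, using $D(\Phi^{-1})(\Phi(x))=(D\Phi(x))^{-1}$) gives \eqref{eq:geo-f2}.

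I would prove the linear algebra claim by appending a normal column. For any full--rank $n\times(n-1)$ matrix $M$ with unit normal $m$ to its columns, $[M\mid m]^{t}[M\mid m]=\operatorname{diag}(M^{t}M,1)$, hence $(\det[M\mid m])^{2}=\det(M^{t}M)$; also $\det[P\mid w]=(w\cdot\nu)\det[P\mid\nu]$ for every $w$, since the part of $w$ lying in the column space of $P$ contributes a singular matrix. Apply the first identity to $M=BP$, whose unit normal $n_W$ satisfies $n_W\parallel B^{-t}\nu$ (because $v\cdot Bp=0$ for every column $p$ of $P$ is equivalent to $B^{t}v\parallel\nu$); then, using $\det[BP\mid v]=\det(B)\det[P\mid B^{-1}v]$, the relation $B^{-1}n_W\cdot\nu=n_W\cdot B^{-t}\nu$, and $n_W=B^{-t}\nu/\|B^{-t}\nu\|$, one collects $\det[BP\mid n_W]=\det(B)\,\|B^{-t}\nu\|\,\det[P\mid\nu]$; squaring and using $(\det[BP\mid n_W])^{2}=\det((BP)^{t}BP)$ and $(\det[P\mid\nu])^{2}=\det(P^{t}P)$ gives the claim.

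The only genuine point is this lemma, and within it the sign: one gets $\det B$, not $|\det B|$, and it is precisely the orientation--preserving hypothesis $\det(D\Phi)>0$ that forces $\det[P\mid\nu]$ and $\det[BP\mid n_W]$ to carry the same sign once the two outward normals are chosen compatibly. Everything else --- the chain rule for \eqref{eq:geo-f1}, the reduction to charts, the Gram--determinant expression of the induced surface measure --- is routine, so I expect the bookkeeping of transposes, inverses and orientations in that lemma to be the main (and essentially only) obstacle.
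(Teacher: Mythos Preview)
Your argument is correct. For \eqref{eq:geo-f1} you do exactly what the paper does: chain rule on $\widetilde g = g\circ\Phi$ and normalise. For \eqref{eq:geo-f2}, however, you take a genuinely different route. The paper works with differential forms: it writes the surface measure as the interior product $\gamma_{n-1}=\iota_{\nu_y}\gamma_n$ of the unit normal with the ambient volume form, pulls back via $\Phi^*$, and manipulates $\Phi^*(\iota_{\nu_y}\gamma_n)=\iota_{(D\Phi)^{-1}\nu_y}\Phi^*\gamma_n$ together with \eqref{eq:geo-f1} to read off the Jacobian factor in a few lines. Your approach is the more hands-on area-formula/Gram-determinant computation: reduce to a chart, express the tangential Jacobian as a ratio of Gram determinants, and prove the key identity $\det((BP)^tBP)=(\det B)^2\|B^{-t}\nu\|^2\det(P^tP)$ by the ``append a normal column'' trick. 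Your method is more elementary --- it needs only linear algebra, no exterior calculus --- at the cost of some bookkeeping; the paper's form-based argument is slicker but presupposes comfort with interior products and pull-backs. One small remark: your sign discussion is slightly overcautious, since after squaring you get $|\det B|$ regardless, and $\det(D\Phi)>0$ then gives $\det B$; the orientation hypothesis is only needed to drop the absolute value, not to match signs before squaring.
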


\begin{proof}
The first equality is a calculus fact and it follows from the relations
\begin{equation*}
\nu_y = \frac{\nabla_y g(y)}{\|\nabla_y g(y)\|}, \quad \nu_x = \frac{\nabla_x \widetilde{g}(x)}{\|\nabla_x \widetilde{g}(x)\|},\quad \widetilde{g}(x) = g(\Phi(x)),
\end{equation*}
and an application of the chain rule. We verify the second equality from a geometric point of view. For this purpose, set $M = \R^n$ and $N = \Phi(\R^n)$ so that $\Phi: M \to N$ is a diffeomorphsim between two manifolds. Recall that the volume form $\gamma_{n-1}$ on the surface $S$, which is a submanifold of $N$, is $\iota_{\nu_y} \gamma_n$, i.e. the interior product of the vector $\nu_y$ with the volume form $\gamma_n$ of $N$. Similarly, the volume form $\widetilde{\gamma}_{n-1}$ of $\widetilde{S}$ is $\iota_{\nu_x} \widetilde{\gamma}_n$. By the change of variable formula, we have
\begin{equation*}
\int_S f(y) d\sigma(y) = \int_S f \gamma_{n-1} = \int_{\widetilde{S}} \widetilde{f} \Phi^* \gamma_{n-1},
\end{equation*}
where $\Phi^* \gamma_{n-1}$ is the pull-back of the volume form $\gamma_{n-1}$.

Next we aim to find the relation between $\Phi^* \gamma_{n-1}$ to $\widetilde{\gamma}_{n-1}$. Let $h$ be the function in front of $\left(D\Phi(x)\right)^{t} \nu_y$ in \eqref{eq:geo-f1}. We calculate
\begin{equation*}
\begin{aligned}
\Phi^* \gamma_{n-1} &= \Phi^*\left(\iota_{\nu_y} \gamma_n\right) = \iota_{(D\Phi)^{-1}\nu_y} \Phi^* \gamma_n = \det(D\Phi) \left(\iota_{(D\Phi)^{-1}\nu_y} \widetilde{\gamma}_{n}\right) \\
&= \frac{\det(D\Phi)}{h} \left(\iota_{(D\Psi) \left(D\Psi\right)^t\nu_x} \widetilde{\gamma}_{n}\right) = \frac{\det(D\Phi)}{h} \frac{\widetilde{\gamma}_n ((D\Psi) \left(D\Psi\right)^t\nu_x, X_2,\cdots,X_n)}{\widetilde{\gamma}_n(\nu_x,X_2,\cdots,X_n)}  \left(\iota_{\nu_x} \widetilde{\gamma}_n \right) \\
&= \frac{\det(D\Phi)}{h} \|(D\Psi)^t \nu_x\|^2 \widetilde{\gamma}_{n-1} = h \det(D\Phi) \widetilde{\gamma}_{n-1}.
\end{aligned}
\end{equation*}
In the seond line above, $(\nu_x, X_2,\cdots,X_n)$ is chosen to be an orthonormal frame at $x \in \widetilde{S}$. The second equality of the proposition follows from the above identity.
\end{proof}

\subsection{Proof of Proposition \ref{prop:weak}}
\label{sec:equiv_weak}

In this section, we prove Proposition \ref{prop:weak}. We recall that by assumption, the $\sigma$-algebra $\F$ is countably generated so that $L^2(\Omega)$ is separable.

Proposition \ref{prop:weak} says that for almost all realization $\omega \in \Omega$, the weak solution to the auxiliary problem defined in \eqref{eq:weakaux}, where both variables of $\Omega \times \R^d$ are integrated in the weak formulation, is also a weak solution in the usual sense, where only the spatial variable is integrated. Recall that the space of test functions in the usual weak formulation \eqref{eq:weakaux_pw} is the composition of
\begin{equation*}
C^\infty_c(\Rdp \times \Rdm) := \{\widetilde{\phi} = (\widetilde{\phi}^+, \widetilde{\phi}^-) \,|\, \widetilde{\phi}^+ \in C^\infty(\Rdp), \, \widetilde{\phi}^- \in C^\infty(\Rdm), \, \mathrm{supp}\, \widetilde{\phi} \subset \subset \R^d \}.
\end{equation*}
with $\Phi^{-1}$. Due to the compact support, the space above is separable, and we can choose a countable dense subset of it denoted by $\{\widetilde{\phi}_k\}_{k=1}^\infty$. For any $\omega \in \Omega$, set $\phi_k(\cdot,\omega) = \widetilde{\phi}_k \circ \Phi^{-1}(\cdot, \omega)$.
\smallskip

\begin{proof}[Proof of Proposition \ref{prop:weak}] Let $w_p \in \Hcal$ be a weak solution to \eqref{eq:auxil} in the sense of \eqref{eq:weakaux}. Fix a $\widetilde{\phi}_k$ and let $K$ denote its support. Then for any $\psi \in L^2(\Omega)$, the function $\psi(\omega) \phi_k(x,\omega) \in \Hcal$, and we have
\begin{equation*}
\begin{aligned}
 \int_\Omega \psi(\omega) \left(\int_{\Phi(K\cap \Rdp,\omega)} A(\nabla w_p^+ + p)\cdot \nabla \phi_k^+ dy + \int_{\Phi(K\cap \Rdm,\omega)}  A(\nabla w_p^- + p)\cdot \nabla \phi_k^- dy\right.\\
+  \quad \left.\int_{\Phi(K\cap \Gamd,\omega)} (w^+ - w^-)(\phi_k^+ -\phi_k^-) d\sigma(y) \right)d{\Pb(\omega)}= 0.
\end{aligned}
\end{equation*}
Since $\psi$ is arbitrary, the sum of the inner integrals must be zero almost surely. In other words, there exists a measurable subset $\Omega_k \subset \Omega$ with $\Pb(\Omega_k) = 1$ and for all $\omega \in \Omega_k$, we have
\begin{equation}
\begin{aligned}
 \int_{\Phi(K\cap  \Rdp,\omega)} A(\nabla w_p^+ + p)\cdot \nabla \phi_k^+ dy + \int_{\Phi(K\cap \Rdm,\omega)}  A(\nabla w_p^- + p)\cdot \nabla \phi_k^- dy\\
+  \quad \int_{\Phi(K \cap \Gamd,\omega)} (w^+ - w^-)(\phi_k^+ -\phi_k^-) d\sigma(y) = 0.
\end{aligned}
\label{eq:weak_pw}
\end{equation}
Set $\Omega_0 = \bigcap_{k=1}^\infty \Omega_k$. Then $\Pb(\Omega_0) = 1$ and \eqref{eq:weak_pw} holds for all $\omega \in \Omega_0$ and all $k=1,2,\cdots$. The general case  \eqref{eq:weakaux_pw} follows by the standard density argument. This completes the proof of Proposition \ref{prop:weak}.
\end{proof}
\subsection{Backward induction}

For the sake of completeness, we include here a proof of the backward induction, which was used in step two in the proof of Lemma \ref{lem:regul}.
\begin{lemma}[Backward induction]\label{lem:bw} Let $E_1\le E_2\le \cdots \le E_n$ be an increasing sequence of $n$ nonnegative real numbers. Suppose $E_n \le Cn^d$ and
\begin{equation}
E_k \le C_1(E_{k+1}-E_k + (k+1)^d), \quad  k=1,2,\cdots,n-1.
\label{eq:bw:sv}
\end{equation}
Then there exists a constant $C' = C'(C,C_1,d)$ such that
\begin{equation}
E_k \le C'k^d, \quad k=1,2,\cdots,n.
\label{eq:bw}
\end{equation}
\end{lemma}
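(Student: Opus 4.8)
The plan is to convert the one–sided recursion into a contraction and then iterate it downward from the top index $n$. First I would rearrange \eqref{eq:bw:sv}: adding $C_1 E_k$ to both sides gives $(1+C_1)E_k \le C_1 E_{k+1} + C_1(k+1)^d$, so with $\theta := C_1/(1+C_1) \in (0,1)$ one obtains
\begin{equation*}
E_k \le \theta\, E_{k+1} + \theta\,(k+1)^d, \qquad k = 1,\dots,n-1.
\end{equation*}
Iterating this bound from index $k$ up to index $n$ (a straightforward finite induction on the number of steps) yields
\begin{equation*}
E_k \le \theta^{\,n-k} E_n + \sum_{j=1}^{n-k} \theta^{\,j}\,(k+j)^d .
\end{equation*}

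Next I would estimate the two pieces using only the geometric decay $\theta<1$ and elementary algebra, the whole point being to extract a constant that is independent of $n$ (and of $k$). Since $k\ge 1$ we have $k+j \le k(1+j)$ and $n = k + (n-k) \le k\,(1 + (n-k))$, hence $(k+j)^d \le k^d(1+j)^d$ and $n^d \le k^d (1 + (n-k))^d$. Combining these with $E_n \le Cn^d$ gives
\begin{equation*}
E_k \le C\,\theta^{\,n-k}\bigl(1+(n-k)\bigr)^d\, k^d + k^d \sum_{j=1}^{\infty} \theta^{\,j}(1+j)^d
\le \Bigl( C \sup_{m\ge 0}\theta^{\,m}(1+m)^d + \sum_{j=1}^{\infty}\theta^{\,j}(1+j)^d \Bigr) k^d .
\end{equation*}
Both the supremum and the series are finite, because $\theta\in(0,1)$ forces $\theta^m(1+m)^d \to 0$ as $m\to\infty$; moreover their values depend only on $\theta$ — hence only on $C_1$ — and on $d$. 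Taking $C'$ to be the bracketed constant (which depends only on $C$, $C_1$, $d$) then gives \eqref{eq:bw}, noting that the case $k=n$ is immediate from $E_n\le Cn^d$.

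The argument is essentially routine, so I do not expect a genuine obstacle; the one thing to be careful about is that $C'$ must be uniform in $n$, and this is exactly what the factor $\theta^{\,n-k}$ buys — the geometric decay absorbs the polynomial growth coming from $E_n\le Cn^d$ no matter how large $n$ is. (If one prefers to avoid identifying the two explicit constants above, the same conclusion also follows by a downward induction on $k$: one guesses $E_k \le C' k^d$, plugs the guess for $E_{k+1}$ into $E_k \le \theta E_{k+1}+\theta(k+1)^d$, and checks that $\theta C'(k+1)^d + \theta(k+1)^d \le C' k^d$ holds once $C'$ is chosen large relative to $\theta$ and $d$, using $(k+1)^d \le 2^d k^d$.)
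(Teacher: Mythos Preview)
Your main argument is correct and is more direct than the paper's. The paper argues by contradiction: it posits the ansatz $E_k \le C_2(k^d + C_3)$ with $C_2 = \beta C_1 \ge C$, assumes the bound holds for indices $n,n-1,\dots,k+1$ but fails at $k$, and derives a contradiction by choosing $C_3$ large enough; the key point there is that $(1+1/k)^d \to 1$, so beyond some threshold $k_0(C_1,\beta)$ the induction step goes through automatically, while for $k\le k_0$ the additive constant $C_3$ absorbs the defect. Your approach instead iterates the contraction $E_k \le \theta E_{k+1} + \theta(k+1)^d$ explicitly and bounds the resulting geometric sum, yielding a closed-form $C'$ in terms of $\sup_{m\ge 0}\theta^m(1+m)^d$ and $\sum_{j\ge 1}\theta^j(1+j)^d$. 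This avoids the case distinction entirely and makes the dependence on $C,C_1,d$ more transparent; the paper's route has the minor advantage of staying closer to the ``guess and verify'' spirit of a hole-filling argument.

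One caveat: the parenthetical alternative you sketch at the end does not work as written. The crude bound $(k+1)^d \le 2^d k^d$ turns the induction step into $\theta(C'+1)2^d \le C'$, which is solvable for $C'$ only when $\theta 2^d < 1$; since $\theta = C_1/(1+C_1)$ can be arbitrarily close to $1$, this fails for large $C_1$. The paper's version of this idea succeeds precisely because it keeps the sharper factor $(1+1/k)^d$ and handles small $k$ separately with an additive constant. This does not affect your proof, since the main argument stands on its own.
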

\begin{proof} First, it is easy to see that the conclusion of the lemma follows if we could prove that there exist $C_2>0,C_3>0$ such that
\begin{equation}
E_k \le C_2(k^d + C_3), \quad k = 1,2,\cdots,n.
\label{eq:bw:key}
\end{equation}
Indeed, we can choose $C' = C_2(C_3+1)$ in \eqref{eq:bw}. So, we focus on the proof of \eqref{eq:bw:key}. Let us choose $C_2 = \beta C_1$ for some $\beta>1$ so that $C_2\ge C$. Then for any $C_3>0$, the inequality in \eqref{eq:bw:key} holds for $k=n$. Suppose that this inequality holds for $j=n,n-1,\cdots,k+1$ but
\begin{equation}
E_k > C_2(k^d + C_3) = C_1\beta(k^d + C_3).
\label{eq:bw:Ekbig}
\end{equation}
Then we will have
\begin{equation*}
E_{k+1}-E_k < C_2((k+1)^d + C_3) - C_2(k^d + C_3) = C_2((k+1)^d - k^d).
\end{equation*}
Substitute this relation into \eqref{eq:bw:sv}; we get
\begin{equation*}
E_k \le C_1((C_2+1)(k+1)^d-C_2 k^d).
\end{equation*}
Comparing this inequality with \eqref{eq:bw:Ekbig}. We see that
\begin{equation}
0>C_1 \beta \left[C_3 + k^d\left(1+C_1 - \left(C_1 + \frac{1}{\beta}\right)\left(1+\frac{1}{k}\right)^d\right)\right]
\label{eq:bw:contra}
\end{equation}
Given $C_1>0$ and $\beta>1$. There exists a $k_0(C_1,\beta)$ such that
\begin{equation*}
\left(C_1 + \frac{1}{\beta}\right)\left(1+\frac{1}{k}\right)^d - (C_1 + 1) \begin{cases} 
\le 0 &\text{if}\quad k > k_0,\\
>0 &\text{if} \quad k \le k_0.
\end{cases}
\end{equation*}
Therefore, if we take
$$
C_3 = \max_{k\le k_0} \ k^d \left(\left(C_1 + \frac{1}{\beta}\right)\left(1+\frac{1}{k}\right)^d - (C_1 + 1)\right),
$$
then \eqref{eq:bw:contra} cannot happen. This means that \eqref{eq:bw:key} holds with $C_2=\beta C_1$ and $C_3$ given above. This completes the proof of the backward induction.
\end{proof}

\end{document}